\DeclareMathAlphabet{\mathcal}{OMS}{cmsy}{m}{n}
\theoremstyle{plain}
\newtheorem{theorem}{Theorem}[section]
\newtheorem{lemma}[theorem]{Lemma}
\newtheorem{proposition}[theorem]{Proposition}
\newtheorem{corollary}[theorem]{Corollary}
\newtheorem*{maintheorem}{Theorem \ref{thm:main_theorem_I}}
\newtheorem*{maintheoremII}{Theorem \ref{thm:main_theorem_II}}
\newenvironment{theorembis}[1]
  {%
   \addtocounter{theorem}{-1}%
   \begin{theorem}}
  {\end{theorem}}
\newtheorem*{claim}{Claim}
\theoremstyle{definition}
\newtheorem{definition}[theorem]{Definition}
\newtheorem{notion}[theorem]{Notion Convention}
\theoremstyle{remark}
\newtheorem{remark}[theorem]{Remark}
\numberwithin{equation}{section}
\def\XXint#1#2#3{{\setbox0=\hbox{$#1{#2#3}{\int}$}
    \vcenter{\hbox{$#2#3$}}\kern-.5\wd0}}
\def\@citestyle{\m@th\upshape\mdseries}
\def\citeform#1{{\color{blue}\bfseries#1}}
\def\@cite#1#2{{%
  \@citestyle[\citeform{#1}\if@tempswa, #2\fi]}}
\let\csname cite \endcsname\cite
  \edef\cite{\@nx\protect\@xp\@nx\csname cite \endcsname}%
   \newdimen\mex
   \def\niv{\mathrel{\hbox{\hglue .5\mex
       \vrule \@height 1.5\mex \@width .15\mex
       \vrule \@height .15\mex \@width 1.5\mex
       \hglue .5\mex}}}
   \def\vin{\mathrel{\hbox{\hglue .1\mex
       \vrule \@height .1\mex \@width 1.3\mex
       \vrule \@height 1.3\mex \@width .1\mex
       \hglue .4\mex}}}
\renewcommand{\leq}{\leqslant}
\renewcommand{\geq}{\geqslant}
\newcommand{\inner}[2]{\langle #1\,,#2\rangle}
\newcommand{\C}{\mathbb{C}}
\newcommand{\R}{\mathbb{R}}
\newcommand{\B}{\mathbb{B}}
\newcommand{\U}{\mathbb{U}}
\renewcommand{\H}{\mathbb{H}}
\newcommand{\T}{\mathbf{T}}
\newcommand{\X}{\mathbf{X}}
\renewcommand{\O}{\mathcal{O}}
\DeclareMathOperator{\area}{area}%
\DeclareMathOperator{\dist}{dist}%
\DeclareMathOperator{\length}{length}%
\DeclareMathOperator{\spt}{spt}%
\DeclareMathOperator{\CH}{\mathsf{CH}}%
\DeclareMathOperator{\Mob}{\mathsf{M{\ddot{o}}b}}%
\DeclareMathOperator{\PSL}{\mathsf{PSL}}%
\DeclareMathOperator{\SO}{\mathsf{SO}}%
\newcommand{\Bcal}{\mathbf{B}}
\newcommand{\CC}{\mathcal{C}}
\newcommand{\Ascr}{\mathscr{A}}
\newcommand{\Dscr}{\mathscr{D}}
\newcommand{\Nscr}{\mathscr{N}}
\renewcommand{\v}{\bm{v}}
\newcommand*\rel@kern[1]{\kern#1\dimexpr\macc@kerna}
\newcommand*\widebar[1]{%
  \begingroup
  \def\mathaccent##1##2{%
    \rel@kern{0.8}%
    \overline{\rel@kern{-0.8}\macc@nucleus\rel@kern{0.2}}%
    \rel@kern{-0.2}%
  }%
  \macc@depth\@ne
  \let\math@bgroup\@empty \let\math@egroup\macc@set@skewchar
  \mathsurround\z@ \frozen@everymath{\mathgroup\macc@group\relax}%
  \macc@set@skewchar\relax
  \let\mathaccentV\macc@nested@a
  \macc@nested@a\relax111{#1}%
  \endgroup
}
\newcommand{\Xbar}{\widebar{X}}
\begin{document}

\title{Asymptotic Plateau problem for two contours}

\author{Biao Wang}
\date{\today}

\subjclass{Primary 53A10, Secondary 57M05}
\address{Department of Mathematics and Computer Science\\
         The City University of New York, QCC\\
         222-05 56th Avenue Bayside, NY 11364\\}
\email{biwang@qcc.cuny.edu}

\begin{abstract}
   Let $\Gamma_{1}$ and $\Gamma_{2}$ be two disjoint rectifiable star-shaped
   Jordan curves in the asymptotic boundary $\partial_{\infty}\H^{3}$
   of the hyperbolic space $\H^3$. If the distance between $\Gamma_{1}$ and $\Gamma_{2}$
   are bounded above by a constant, then there exists an area minimizing annulus
   $\Pi\subset\H^3$, which is asymptotic to $\Gamma_{1}\cup\Gamma_{2}$.
   The main results of this paper are Theorem \ref{thm:main_theorem_I} and
   Theorem \ref{thm:main_theorem_II}.
\end{abstract}

\maketitle


\section{Introduction}\label{sec:introduction}

In this paper we study the asymptotic Plateau problem in hyperbolic $3$-space
$\H^3$ when the prescribed boundary data consists of
two disjoint Jordan curves at infinity.
There are several models for $\H^3$, among which we shall use
the Poincar{\'e} ball model and the upper half-space model.

The \emph{Poincar{\'e} ball model} of $\H^3$ is the open unit ball
\begin{equation}\label{eq:ball_model}
   \B^{3}=\{(u,v,w)\in\R^{3}\ | \ u^{2}+v^{2}+w^{2}<{}1\}
\end{equation}
equipped with the hyperbolic metric
$ds^{2}=4(du^{2}+dv^{2}+dw^{2})/(1-r^{2})^{2}$,
where $r=\sqrt{u^{2}+v^{2}+w^{2}}$.
The orientation preserving isometry group of $\B^3$
is denoted by $\Mob(\B^3)$, which consists of M\"obius transformations
that preserve the unit ball (see \cite[Theorem 1.7]{MT98}).
The hyperbolic $3$-space $\B^{3}$ has a natural compactification:
$\overline{\B^{3}}=\B^{3}\cup{}S_{\infty}^{2}$,
where $S_{\infty}^{2}\cong\C\cup\{\infty\}$ is called the
\emph{asymptotic boundary of} $\B^{3}$ or the \emph{idea boundary of}
$\B^{3}$ \emph{at infinity}.
Suppose that $X$ is a subset of $\B^{3}$, we define the
\emph{asymptotic boundary} of $X$ by
$\partial_{\infty}X=\Xbar \cap{}S_{\infty}^{2}$,
where $\Xbar$ is the closure of $X$ in $\overline{\B^{3}}$.
Obviously we have $\partial_\infty\B^3=S_{\infty}^2$.
If $P$ is a geodesic plane in $\B^{3}$, then $P$ is perpendicular to
$S_{\infty}^{2}$ and $C\stackrel{\text{def}}{=}\partial_{\infty}P$ is
an Euclidean round circle in $S_{\infty}^{2}$.
We also say that $P$ is {\em asymptotic to} $C$.

The \emph{upper half space model} of $\H^3$ is the upper half space
\begin{equation}\label{eq:upper_half_space}
   \U^{3}=\{z+tj\ |\ z\in\C\ \text{and}\ t>0\}
\end{equation}
equipped with the hyperbolic metric $ds^{2}=(|dz|^{2}+dt^{2})/t^{2}$,
where $z=x+iy$ for $x,y\in\R$.
The orientation preserving isometry group of $\U^3$ is denoted by
$\PSL_{2}(\C)$, which consists of linear fractional transformations.
It's well known that $\Mob(\B^3)\cong\PSL_{2}(\C)$.
The asymptotic boundary of $\U^3$ is
$\widehat\C=\C\cup\{\infty\}\cong{}S_{\infty}^{2}$.

For a collection of disjoint Jordan curves
$\Gamma=\{\Gamma_{1},\ldots,\Gamma_{k}\}$ in $S_{\infty}^{2}$, where $k\geq{}1$,
the \emph{asymptotic Plateau problem} in $\H^3$ asks the
existence of an (absolutely) area minimizing surfaces $\Sigma\subset\H^{3}$
asymptotic to $\Gamma$, that is,
$\partial_{\infty}\Sigma=\Gamma_{1}\cup\cdots\cup\Gamma_{k}$.
The asymptotic Plateau problem was first studied by Anderson
\cite{And82,And83} in hyperbolic $n$-space ($n\geq{}3$) for
arbitrary codimensions in the case when $k=1$.
In particular, using methods in geometric measure theory,
Anderson showed that there exists a complete embedded area minimizing plane
asymptotic to a given Jordan curve in $S_{\infty}^{2}$.

\begin{theorem}[{\cite[Theorem 4.1]{And83}}]\label{them:Theorem_of_Anderson}
For any Jordan curve $\Gamma$ in $S_{\infty}^{2}$, there exists a complete
embedded disk-type area minimizing surface $\Sigma\subset\H^3$,
which is asymptotic to $\Gamma$.
\end{theorem}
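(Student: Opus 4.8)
The plan is to follow the geometric measure theory approach: realize $\Sigma$ as a limit of solutions to compact Plateau problems and then control the limit's behavior at infinity. Work in the ball model $\B^3$. First I would approximate $\Gamma\subset S_{\infty}^{2}$ from the interior by choosing a sequence of rectifiable Jordan curves $\Gamma_{i}\subset\B^3$ (for instance the radial projections of $\Gamma$ onto the Euclidean spheres of radius $1-1/i$), so that $\Gamma_{i}\to\Gamma$ in the Hausdorff sense inside $\overline{\B^3}$. For each $i$ the classical compact Plateau problem has a solution: by the Federer--Fleming compactness theorem for integral currents together with the lower semicontinuity of mass, there is an area minimizing integral $2$-current $T_{i}$ with $\partial T_{i}=\Gamma_{i}$, and one may take $T_{i}=\partial\Omega_{i}$ to be the reduced boundary of a Caccioppoli set, so that $T_{i}$ is one-sided and separating.

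The second step is to pass to the limit $i\to\infty$. For this I would establish uniform local mass bounds $\mathbf{M}(T_{i}\llcorner K)\le C(K)$ on every compact set $K\Subset\B^3$, independent of $i$. Because the ambient curvature is constant, the monotonicity formula together with comparison against totally geodesic cones furnishes such bounds. The compactness theorem then yields a subsequence converging in the local flat topology to a locally area minimizing current $T=\Sigma$, the minimizing property being transferred to the limit by lower semicontinuity of mass. Since the ambient dimension is $3$, well below the first dimension ($8$) in which codimension-one minimizers can be singular, the interior regularity theory guarantees that $\Sigma$ is a smooth, properly embedded surface in $\B^3$.

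The main obstacle is the behavior at infinity: one must show that $\partial_{\infty}\Sigma=\Gamma$ exactly, since the metric degenerates along $S_{\infty}^{2}$ and interior estimates alone do not control the limit there. Here I would build barriers out of totally geodesic planes, which are asymptotic to Euclidean round circles in $S_{\infty}^{2}$ and are themselves area minimizing, hence admissible in the maximum principle. Given a round circle $C$ disjoint from $\Gamma$, the associated geodesic plane $P$ separates $\B^3$, and for all large $i$ the curve $\Gamma_{i}$ lies on one side of $P$; by the maximum principle $T_{i}$ cannot cross $P$. Letting such circles approach $\Gamma$ from within each complementary disk traps every $T_{i}$, and hence $\Sigma$, in a region whose closure meets $S_{\infty}^{2}$ precisely along $\Gamma$. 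This gives $\partial_{\infty}\Sigma\subset\Gamma$, while the reverse inclusion follows from the uniform area bounds, which prevent $\Sigma$ from detaching from $\Gamma$ or degenerating.

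Finally I would identify the topological type. The curve $\Gamma$ separates $S_{\infty}^{2}$ into two disks, and since $\Sigma=\partial\Omega$ is embedded and separating with connected asymptotic boundary $\Gamma$, it is orientable with a single end. Positive genus or additional topology would permit an area-reducing surgery, contradicting the minimizing property; hence $\Sigma$ is a disk. This produces a complete embedded disk-type area minimizing surface asymptotic to $\Gamma$, as required. I expect the asymptotic boundary control of the third step, namely ruling out loss of mass at $S_{\infty}^{2}$ and pinning $\partial_{\infty}\Sigma$ to $\Gamma$, to be the crux of the argument.
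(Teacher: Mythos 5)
The paper does not prove this statement; it is quoted verbatim from Anderson \cite[Theorem 4.1]{And83}, so the only fair comparison is with Anderson's original argument, and your outline is essentially that argument: exhaustion by compact Plateau problems, uniform local mass bounds, compactness of integral currents, codimension-one interior regularity, and barrier/convex-hull control at infinity.

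Two technical points in your sketch need repair, though neither changes the architecture. First, you cannot literally take $T_{i}=\partial\Omega_{i}$ for a Caccioppoli set $\Omega_{i}$, since $\partial\partial\Omega_{i}=0$ while $\partial T_{i}=\Gamma_{i}\neq 0$. The standard device (and Anderson's) is to fix a ``cap'' $S_{i}^{+}$, one of the two components of the sphere carrying $\Gamma_{i}$ minus $\Gamma_{i}$, and minimize among currents homologous to $S_{i}^{+}$; then $T_{i}-S_{i}^{+}=\partial\Omega_{i}$, which is what gives the separating structure and the clean local mass bound $\mathbf{M}(T_{i}\llcorner B(p,r))\leq \mathbf{M}(\partial B(p,r))$ by comparing with $\partial(\Omega_{i}\cap B(p,r))$. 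Your alternative of ``comparison against totally geodesic cones'' is shakier, because the cone over $\Gamma$ need not have locally finite area for a general (non-rectifiable) Jordan curve. Second, the inclusion $\Gamma\subset\partial_{\infty}\Sigma$ does not follow from the \emph{upper} area bounds; what prevents mass from vanishing near a point of $\Gamma$ is the combination of a separation/linking argument (each $T_{i}$ must meet every geodesic joining the two complementary disks of $\Gamma$) with the \emph{lower} density bound from the monotonicity formula, which forbids the limit from being empty where the $T_{i}$ are forced to pass. With those corrections your proposal reproduces the cited proof; the final topological step (ruling out handles by an area-reducing replacement inside the region $\Omega$ bounded by $\Sigma$) is also in the spirit of Anderson's argument, though it deserves more detail than ``surgery'' to be complete.
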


By the interior regularity results of geometric measure theory
(see \cite{Fed69}), the complete area minimizing disk $\Sigma$ in
Theorem \ref{them:Theorem_of_Anderson}
is smooth. Moreover if $\Sigma$ is an (\emph{absolutely}) area minimizing
surface asymptotic to a $C^{1,\alpha}$ Jordan curve $\Gamma$ in
$S_{\infty}^{2}$, then $\Sigma$ is $C^{1,\alpha}$ at infinity (see \cite{HL87});
the higher boundary regularity of $\Sigma$ at
infinity was studied by Lin in \cite{Lin89,Lin12}.
Moreover, the asymptotic behavior of
area-minimizing currents in hyperbolic space with higher codimensions
was studied by Lin in \cite{Lin89CPAM}.
The reader can read the survey \cite{Cos14} for other topics on asymptotic
Plateau problem.

In this paper we shall study the asymptotic Plateau problem in 
$\H^3$ when $\Gamma=\{\Gamma_{1},\ldots,\Gamma_{k}\}\subset{}S_{\infty}^{2}$
consists of \emph{two} components.

\begin{definition}
If $C_{1}$ and $C_{2}$ are two disjoint round circles in
$\partial_{\infty}\H^{3}=S_{\infty}^{2}$, we define the distance
between $C_{1}$ and $C_{2}$ as follows
\begin{equation}\label{eq:distance_circles}
   d(C_1,C_2)=\dist(P_1,P_2)\ ,
\end{equation}
where $\dist(\cdot,\cdot)$ is the hyperbolic distance of $\H^3$ and
$P_{i}$ is the totally geodesic plane asymptotic to $C_{i}$ for
$i=1,2$.
\end{definition}

\begin{remark}
In the remaining part of the paper, when we say \emph{circles}, we mean \emph{round}
circles; otherwise we shall say simple closed curves or Jordan curves.
\end{remark}

The following theorem of Gomes (see \cite[Proposition 3.2]{Gom87}
or \cite[Theorem 3.2]{Wan19}) partially solved the asymptotic Plateau problem
in $\H^3$ when two given disjoint Jordan curves in $S_{\infty}^{2}$ are circles.

\begin{theorem}[Gomes]\label{thm:Gomes1987-prop3.2-a}
Let $a_{c}\approx{}0.49577$ be the {\rm(}unique{\rm)} critical number
of the function $\varrho$ defined by \eqref{eq:Gomes function I}.
For two disjoint circles $C_{1},C_{2}\subset{}S_{\infty}^{2}$, if
\begin{equation}\label{eq:maximim_distance}
   d(C_{1},C_{2})\leq{}2\varrho(a_{c})\approx{}1.00229\ ,
\end{equation}
then there exists a minimal surface of revolution, that is, a
spherical catenoid in $\H^3$, which is asymptotic to $C_{1}\cup{}C_{2}$.
\end{theorem}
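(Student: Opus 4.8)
The plan is to exploit the rotational symmetry to reduce the problem to a single ODE for a profile curve, and then to read off the existence of the catenoid from the qualitative behavior of the resulting quadrature, which is precisely the function $\varrho$.

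First I would normalize the configuration. Two disjoint circles in $S_{\infty}^{2}$ always share a common axis of symmetry, so after applying a suitable $T\in\Mob(\B^{3})\cong\PSL_{2}(\C)$ I may assume, in the upper half-space model $\U^{3}$, that $C_{1}=\{|z|=r_{1}\}$ and $C_{2}=\{|z|=r_{2}\}$ are concentric Euclidean circles centered at the origin; composing with the scaling isometry $(z,t)\mapsto\lambda(z,t)$ I may further arrange $r_{1}=1/r_{2}$, so that inversion in the unit hemisphere is an isometry interchanging $C_{1}$ and $C_{2}$. The totally geodesic planes $P_{i}$ asymptotic to $C_{i}$ are the hemispheres $\{|z|^{2}+t^{2}=r_{i}^{2}\}$, both orthogonal to the vertical geodesic axis $\gamma$ joining $0$ and $\infty$; hence $\gamma$ is their common perpendicular and $d(C_{1},C_{2})=\dist(P_{1},P_{2})=\log(r_{2}/r_{1})$. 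The problem thus becomes: produce a minimal surface invariant under rotation about $\gamma$ whose two asymptotic boundary circles lie at axial distance exactly $d(C_{1},C_{2})$.

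Next I would set up the revolution ODE. In Fermi coordinates $(\rho,s,\theta)$ around $\gamma$, with $\rho$ the distance to $\gamma$, $s$ arc length along $\gamma$, and $\theta$ the rotation angle, the hyperbolic metric takes the form
\[
   d\rho^{2}+\cosh^{2}\rho\,ds^{2}+\sinh^{2}\rho\,d\theta^{2},
\]
and a rotationally invariant surface is the locus $\rho=\rho(s)$, with area functional
\[
   \Acal=2\pi\int\sinh\rho\,\sqrt{(\rho')^{2}+\cosh^{2}\rho}\;ds.
\]
Since the integrand is independent of $s$, the associated Hamiltonian is constant along critical profiles; evaluating it at the neck $\rho=a$ (where $\rho'=0$, which exists by the swap symmetry and sits at $s=0$) yields the first integral
\[
   \frac{\sinh\rho\,\cosh^{2}\rho}{\sqrt{(\rho')^{2}+\cosh^{2}\rho}}=\sinh a\,\cosh a .
\]
Solving for $\rho'$ and separating variables, the asymptotic circle is reached (that is, $\rho\to\infty$) at the finite axial coordinate
\[
   \varrho(a)=\int_{a}^{\infty}\frac{\sinh a\,\cosh a}{\cosh\rho\,\sqrt{\sinh^{2}\rho\,\cosh^{2}\rho-\sinh^{2}a\,\cosh^{2}a}}\;d\rho,
\]
which is the function of \eqref{eq:Gomes function I}. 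By construction the symmetric catenoid with neck radius $a$ is an embedded annulus asymptotic to two circles at axial distance $2\varrho(a)$; one checks that the integrand is $O\bigl((\rho-a)^{-1/2}\bigr)$ near $\rho=a$ and decays at infinity, so $\varrho\colon(0,\infty)\to(0,\infty)$ is well defined.

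Finally I would study $\varrho$. The substitution $x=\cosh 2\rho$ converts it into a standard elliptic-type integral, from which I would extract: (i) $\varrho$ is continuous on $(0,\infty)$; (ii) $\varrho(a)\to 0$ both as $a\to 0^{+}$ (the neck pinches onto $\gamma$) and as $a\to\infty$; and (iii) $\varrho$ has a unique critical point $a_{c}$, necessarily its global maximum, with $a_{c}\approx 0.49577$ and $\varrho(a_{c})\approx 0.50114$. Granting (i)--(iii), the intermediate value theorem shows that every value in $(0,2\varrho(a_{c})]$ is attained by $2\varrho(a)$ for some $a>0$; choosing such an $a$ for the prescribed $d(C_{1},C_{2})$ and transporting the resulting catenoid back by $T^{-1}$ produces a spherical catenoid asymptotic to $C_{1}\cup C_{2}$. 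The hard part is step (iii): proving that the critical point is unique (equivalently, that $\varrho'$ changes sign exactly once) and pinning down the maximum numerically. This is a genuinely quantitative analysis of the elliptic integral — differentiating under the integral sign after regularizing the endpoint singularity, together with a monotonicity argument ruling out extra critical points — and it is exactly where the constant $2\varrho(a_{c})\approx 1.00229$ comes from.
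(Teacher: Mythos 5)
Your proposal is correct and follows essentially the same route the paper takes: the paper does not reprove Gomes's theorem but cites [Gom87]/[Wan19], and its own derivation in $\S$\ref{subsec:catenoids} of the first integral \eqref{eq:differential equations of Pi} and the quadrature \eqref{eq:Gomes function I} is exactly your computation, carried out in the warped-product quotient $\B_{+}^{2}$ rather than in three-dimensional Fermi coordinates (your integrand reduces to the paper's via $\sinh a\cosh a=\tfrac12\sinh(2a)$). The one step you flag as "the hard part" --- the unique critical point of $\varrho$ and the value $\varrho(a_{c})$ --- is precisely what the paper also outsources, to [Wan19, Lemma 3.3]; note that for bare existence the intermediate value argument only needs continuity of $\varrho$, $\varrho\to 0$ at an endpoint, and that the maximum is attained at $a_{c}$.
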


The above theorem of Gomes can't determine whether the catenoid is
area minimizing. Actually we even don't know if it is globally stable.
According to $\S$\ref{subsec:catenoids} any spherical catenoid $\CC$ in $\B^3$
can be determined uniquely up to isometry by the distance from $\CC$ to its
rotation axis.
Let $\CC_{a}$ denote the spherical catenoid in $\B^3$ which has distance
$a$ from itself to its rotation axis (see
Definition \ref{def:sphereical catenoid with parameter}).
The following theorem can determine the stability of spherical catenoids
according to the distances from catenoids to their rotation axes
(see \cite[Proposition 4.10]{BSE10} or \cite[Theorem 1.2]{Wan19}).

\begin{theorem}[B{\'e}rard and Sa Earp]\label{thm:BSE}
Let $a_{c}\approx{}0.49577$ be the {\rm(}unique{\rm)} critical number
of the function $\varrho$ defined by \eqref{eq:Gomes function I}.
\begin{enumerate}
   \item $\CC_{a}$ is unstable if $0<a<a_{c}$, and
   \item $\CC_{a}$ is globally stable if $a\geq{}a_{c}$.
\end{enumerate}
\end{theorem}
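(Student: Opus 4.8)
The plan is to study the second variation of area and reduce the stability question, via the rotational symmetry of $\CC_a$, to the disconjugacy of a single ordinary differential equation. For a normal variation $uN$ of the minimal surface $\CC_a$, with $N$ the unit normal and $A$ the second fundamental form, the index form in $\H^3$ is
\[
   Q(u)=\int_{\CC_a}\Bigl(|\nabla u|^2-\bigl(|A|^2+\mathrm{Ric}(N,N)\bigr)u^2\Bigr)\,dA,
\]
where $\mathrm{Ric}(N,N)=-2$ because $\H^3$ has constant sectional curvature $-1$. As $\CC_a$ is minimal its principal curvatures are $\pm\lambda$, so the Jacobi operator is $L=\Delta+2\lambda^2-2$; observe that the ambient curvature term $-2$ enters $Q$ with a favorable sign, so instability can only be driven by the curvature $2\lambda^2$ concentrated near a thin neck, i.e.\ for small $a$. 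By the criterion of Fischer-Colbrie and Schoen, the complete surface $\CC_a$ is globally stable if and only if $Lu=0$ admits a positive solution. Since $L$ commutes with the one-parameter group of rotations about the axis, averaging a hypothetical positive Jacobi field over this action produces a rotationally symmetric one; hence global stability of $\CC_a$ is equivalent to the existence of a positive solution of the mode-zero Jacobi equation, a second order ODE along the profile of $\CC_a$.

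Next I would exhibit the natural candidate for such a solution, namely the Jacobi field generated by the family itself. Differentiating $a\mapsto\CC_a$ and taking the normal component gives a rotationally symmetric field $w_a=\inner{\partial_a\CC_a}{N}$ solving $Lw_a=0$. The behavior of $w_a$ at the two ends of the catenoid is dictated by how the asymptotic boundary circles move as $a$ varies; since these circles are separated by the distance $2\varrho(a)$, this motion is governed precisely by $\varrho'(a)$. In particular, at the critical value $a_c$, where $\varrho'(a_c)=0$, the boundary ceases to move to first order and $w_{a_c}$ satisfies the boundary conditions, so it becomes a genuine bounded Jacobi field with no interior zeros; this is the marginally stable configuration and the point where the stability type changes.

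I would then establish the dichotomy from the sign of $\varrho'$. For $a\ge a_c$, where $\varrho'(a)\le 0$, the mode-zero equation is disconjugate and $w_a$ can be taken positive on all of $\CC_a$; the Fischer-Colbrie--Schoen criterion then yields global stability, which is assertion (2). For $0<a<a_c$, where $\varrho'(a)>0$, the mode-zero equation is oscillatory and has a solution with two interior zeros, a conjugate pair; the bottom Dirichlet eigenvalue on the interval bounded by these zeros therefore vanishes, hence is strictly negative on any larger interval, and since the profile of $\CC_a$ extends beyond them this yields a compactly supported variation with $Q<0$. Thus $\CC_a$ is unstable, which is assertion (1).

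The main obstacle is the second and third steps, that is, rigorously tying the purely geometric critical point $a_c$ of $\varrho$ to the analytic transition between the oscillatory and disconjugate regimes of the mode-zero Jacobi ODE, and checking that it is the maximum (rather than a minimum) of $\varrho$ that places the stable branch at $a\ge a_c$. This demands a careful Sturm-comparison analysis of the Jacobi equation in terms of the explicit generating function of the catenoid, with delicate control of the solutions near the two ends where $\CC_a$ limits onto its asymptotic circles; once this identification---that $\varrho'(a_c)=0$ is equivalent to the vanishing of the bottom eigenvalue---is secured, the passage from a positive Jacobi field to stability and from a conjugate pair to instability is standard Sturm--Liouville theory.
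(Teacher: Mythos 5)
The paper does not actually prove this statement: Theorem \ref{thm:BSE} is quoted from \cite[Proposition 4.10]{BSE10} (see also \cite[Theorem 1.2]{Wan19}), so there is no in-paper argument to compare against. Measured against the argument in those sources, your strategy is the right one and is essentially theirs: reduce by rotational symmetry to the $n=0$ mode (your averaging argument for that reduction is fine), use the Jacobi field $w_a=\inner{\partial_a\CC_a}{N}$ generated by the family, detect instability through conjugate points, and pass from a positive Jacobi field to global stability of the complete noncompact surface via Fischer-Colbrie--Schoen.

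The genuine gap is precisely the step you flag as the ``main obstacle,'' and it is not a routine verification that can be deferred: the sign of $\varrho'(a)$ controls only the first-order motion of the \emph{asymptotic boundary} circles, i.e.\ the endpoint value $x(\infty;a)=\varrho(a)$ of the profile \eqref{eq: catenary equation}, whereas the zeros of $w_a$ (equivalently, conjugate points of the mode-zero equation) are governed by $\partial_a x(y;a)$ for \emph{every} finite $y$. Two catenaries can have nested ideal boundaries and still cross at finite height, so ``$\varrho'\le 0$ on $[a_c,\infty)$'' does not by itself give disconjugacy, ``$\varrho'>0$ on $(0,a_c)$'' does not by itself produce a solution with two interior zeros, and $w_a$ is a specific field whose positivity must be verified, not ``taken.'' What closes this in \cite{BSE10} (Proposition 4.8 there, used in this paper as the foliation property of the region $\T_{c}$ in \eqref{eq:foliated_region}) is a direct monotonicity analysis of the profile integral in the parameter $a$, showing that the catenoids $\{\CC_a\}_{a\ge a_c}$ are pairwise disjoint and foliate $\T_c$ --- whence $w_a$ is nowhere zero and assertion (2) follows --- while for $a<a_c$ the family develops an envelope at finite height, producing the conjugate pair needed for assertion (1). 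Until you carry out that analysis of $y\mapsto\partial_a x(y;a)$ (or an equivalent explicit Sturm comparison), including control of $w_a$ near the two ends where the surface limits onto $S_{\infty}^{2}$, the proof is a correct outline with its decisive step missing.
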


The following theorem is an equivalent form of Theorem \ref{thm:BSE}.

\begin{theorembis}{thm:BSE}\label{thm:Berard_Sa_Earp}
Let $C_{1}$ and $C_{2}$ be disjoint round circles in $S_{\infty}^{2}$.
\begin{enumerate}
  \item If $d(C_{1},C_{2})=2\varrho(a_{c})$, there exists exactly one
        globally stable catenoid in $\H^3$ asymptotic to $C_{1}\cup{}C_{2}$.
  \item If $0<d(C_{1},C_{2})<2\varrho(a_{c})$, there exist two
        catenoids in $\H^3$ asymptotic to $C_{1}\cup{}C_{2}$ such that one is unstable
        and the other one is globally stable.
\end{enumerate}
\end{theorembis}

We shall explain why there are two spherical catenoids in $\H^3$ asymptotic to the
disjoint round circles $C_{1}$ and $C_{2}$ in $S_{\infty}^{2}$
if $d(C_{1},C_{2})<2\varrho(a_{c})$.
Suppose that we have the family of catenoids $\{\CC_{a}\}_{a>0}$
in $\B^3$ such that their rotation axes are the same $u$-axis and they
are all symmetric about the $vw$-plane (see \eqref{eq:ball_model}).
According to the arguments in \cite[pp. 357--359]{Wan19},
the function $\varrho(a)$ is increasing on $(0,a_{c})$ and
decreasing on $(a_{c},\infty)$, and achieves its maximum value at $a=a_{c}$
(see Figure \ref{fig:Gomes_function}).
If $d(C_{1},C_{2})<2\varrho(a_{c})$, there exist exactly two positive constants
$a'<a_{c}<a''$ such that $d(C_{1},C_{2})=2\varrho(a')=2\varrho(a'')$,
that is to say, both $\CC_{a'}$ and $\CC_{a''}$ are asymptotic to
$C_{1}$ and $C_{2}$.

Recall that a stable minimal surface is locally area minimizing, so it could be
area minimizing. In the above theorem of B{\'e}rard and Sa Earp, we still don't know
whether a stable catenoid is area minimizing.
The following theorem (see also \cite[Theorem 1.5]{Wan19}) can determine some
area minimizing catenoids among the stable ones.
As in \cite[(4.2) and (4.3)]{Wan19}, we define
\begin{equation}\label{eq:least_area_constant}
   a_{l}=\cosh^{-1}\left(1/(1-K)\right)\approx{} 1.10055\ ,
\end{equation}
where the constant $K$ is defined by
\begin{equation}\label{eq:constant_K}
   K=\int_{0}^{1}\frac{1}{x^2}\left(\frac{1}{\sqrt{1-x^4}}-1\right)dx
     \approx{}0.40093\ .
\end{equation}

\begin{theorem}[Wang]\label{thm:least_area_catenoid}
Each catenoid $\CC_{a}$ is area minimizing if $a\geq{}a_{l}$.
\end{theorem}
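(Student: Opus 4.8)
The plan is to prove that $\CC_{a}$ is area minimizing by the method of calibrations, realizing $\CC_{a}$ as a leaf of a foliation of a region of $\H^{3}$ by minimal surfaces. The mechanism is standard: suppose $\omega$ is a closed $2$-form on $\H^{3}$ whose comass is at most $1$ everywhere and whose restriction to $\CC_{a}$ equals the induced area form. Then for any comparison surface $\Sigma'$ asymptotic to $C_{1}\cup C_{2}=\partial_{\infty}\CC_{a}$, Stokes' theorem (applied on a compact exhaustion) gives $\area(\CC_{a})=\int_{\CC_{a}}\omega=\int_{\Sigma'}\omega\leq\area(\Sigma')$, since $\omega$ is closed, calibrates $\CC_{a}$, and has comass $\leq 1$. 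Thus the entire problem is to construct such an $\omega$, and to control the behavior at infinity.

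Since $a\geq a_{l}>a_{c}$, Theorem \ref{thm:BSE} ensures that $\CC_{a}$ is globally stable; in particular it carries a positive Jacobi field, which integrates to a foliation of a neighborhood of $\CC_{a}$ by nearby minimal surfaces. To enlarge this, I would use the catenoid family $\{\CC_{t}\}$ together with the ambient isometries preserving the common rotation axis to sweep out a region $\Omega\subset\H^{3}$ containing $\CC_{a}$ by pairwise disjoint minimal leaves. Let $N$ be the unit normal field of the resulting foliation and set $\omega=\iota_{N}\operatorname{vol}_{\H^{3}}$. Because every leaf is minimal, $\div N=0$ and hence $d\omega=0$; because $N$ is a unit field, $\omega$ has comass $1$ and restricts to the area form on each leaf, in particular on $\CC_{a}$. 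Outside $\Omega$ the form is extended with the help of the totally geodesic planes $P_{1},P_{2}$ asymptotic to $C_{1}$ and $C_{2}$, so that $\omega$ becomes a globally defined closed $2$-form.

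The decisive point is that this extension must still have comass $\leq 1$, and it is precisely here that the hypothesis $a\geq a_{l}$ enters. Equivalently, the catenoidal foliation must mesh with the planar region determined by $P_{1}$ and $P_{2}$ without forcing the comass above $1$; quantitatively this reduces to the requirement that the renormalized area of $\CC_{a}$ not exceed that of the two disks $P_{1}\sqcup P_{2}$, the natural disconnected competitor. Computing the relevant area defect from the explicit profile curve of the catenoid is what produces the integral $K$ of \eqref{eq:constant_K}, and the borderline condition takes the form $\cosh a\geq 1/(1-K)$, that is, $a\geq a_{l}=\cosh^{-1}(1/(1-K))$. For $a\geq a_{l}$ the comass bound holds, the calibration is valid, and the first paragraph applies.

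I expect the main obstacle to be exactly this global comass estimate: verifying that the foliation of $\Omega$ by catenoids and its planar extension fit together with no loss, and extracting the sharp threshold $a_{l}$ from the resulting comparison. A secondary difficulty, to be dispatched before the comass estimate can be used, is the bookkeeping at infinity, since all the surfaces involved have infinite area; this must be handled by a renormalization, or by applying Stokes' theorem on a compact exhaustion and checking that the boundary contributions near $\partial_{\infty}\H^{3}$ vanish in the limit. For the latter one uses the $C^{1}$ regularity at infinity of area minimizers asymptotic to smooth curves, recalled in the introduction.
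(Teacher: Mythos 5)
Your calibration is never actually constructed, and the step where the hypothesis $a\geq a_{l}$ is supposed to enter is asserted rather than derived. The foliation part is fine as far as it goes: the catenoids $\{\CC_{t}\}_{t\geq a_{c}}$ foliate the region $\T_{c}$ of \eqref{eq:foliated_region}, and $\omega=\iota_{N}\mathrm{vol}$ is closed with comass $1$ there; but this only calibrates $\CC_{a}$ against competitors contained in $\T_{c}$, while the dangerous competitors --- the pair of totally geodesic disks, and any surface dipping toward the rotation axis --- live precisely in the complement. Applying axis-preserving isometries to the $\CC_{t}$ produces no new disjoint leaves; the inner catenoids $\CC_{t}$ with $t<a_{c}$ do not foliate the inner region (each shares its asymptotic boundary with an outer one); and there is no reason a closed extension of $\omega$ by the planar field has comass $\leq 1$ across the interface, since closedness forces a flux-matching condition there. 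Your claim that the existence of such an extension ``reduces to'' the renormalized area of $\CC_{a}$ not exceeding that of the two disks has the logic backwards: that inequality is a necessary \emph{consequence} of a calibration (apply Stokes to the homology between $\CC_{a}$ and the two planes), not a sufficient criterion for building one. So the decisive point of your argument is exactly the point you leave open.

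The statement is quoted from \cite{Wan19}, and the mechanism --- reproduced in this paper in the proof of Theorem \ref{thm:main_theorem_I}(2) --- is a replacement argument, not a calibration. The quantitative input is the estimate $\varphi(a)\leq 4\pi K\cosh a-4\pi(\cosh a-1)\leq 0$ for $a\geq a_{l}$, which is where $K$ and $a_{l}=\cosh^{-1}\big(1/(1-K)\big)$ come from, and which yields $\area(\Sigma_{a,r})<\area(\Delta_{r}^{+})+\area(\Delta_{r}^{-})$ for every truncation. One then invokes the existence of a compact area minimizing annulus with boundary $\partial\Sigma_{a,r}$ inside the mean convex region bounded by $\Sigma_{a,r}$ and the two disks \cite{AS79,MY1982(mz)}, shows by symmetry and uniqueness that any such minimizer is a portion of a coaxial catenoid $\CC_{a'}$, and rules out $a'<a_{c}$ by instability (Theorem \ref{thm:BSE} and Proposition 4.8 of \cite{BSE10}). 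Your proposal contains none of these three steps and only gestures at the first inequality; to repair it you would either carry out this replacement argument or genuinely construct a global calibration, which the topology of the competitors makes doubtful.
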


According to the proof of above theorem in \cite{Wan19}, it seems that
there are still some area minimizing catenoids $\CC_a$ when $a_{c}\leq{}a<a_{l}$.
The following theorem is our first main result, which can determine all of the
(absolutely) area minimizing spherical catenoids among the globally stable ones.

\begin{theorem}\label{thm:main_theorem_I}
Let $a_{L}\approx{}0.847486$ be a constant given by
Theorem \ref{thm:area_difference}.
\begin{enumerate}
  \item If $a_{c}\leq{}a<a_{L}$, then each spherical catenoid $\CC_{a}$ is globally
        stable but not {\rm(}absolutely{\rm)} area minimizing.
  \item If $a\geq{}a_{L}$, then each spherical catenoid $\CC_{a}$ is {\rm(}absolutely{\rm)} area
        minimizing among all surfaces asymptotic to $\partial_{\infty}\CC_{a}$.
\end{enumerate}

Moreover, any {\rm(}absolutely{\rm)} area minimizing surface 
asymptotic to two disjoint round circles in $S_{\infty}^{2}$ is
one of the elements in $\{\CC_{a}\}_{a\geq{}a_{L}}$ up to isometry.
\end{theorem}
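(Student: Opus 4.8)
The plan is to fix a pair of disjoint round circles $C_{1},C_{2}\subset S_{\infty}^{2}$ with $d(C_{1},C_{2})=2\varrho(a)$ and to compare the only two candidates for an absolutely area minimizing surface asymptotic to $C_{1}\cup C_{2}$: the globally stable catenoid $\CC_{a}$ itself (a connected annulus, which exists and is stable for $a\geq a_{c}$ by Theorem \ref{thm:Gomes1987-prop3.2-a} and Theorem \ref{thm:BSE}), and the disjoint union $P_{1}\cup P_{2}$ of the two totally geodesic planes asymptotic to $C_{1}$ and $C_{2}$, each of which is an area minimizing disk by Theorem \ref{them:Theorem_of_Anderson}. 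Both surfaces carry infinite area, so the comparison is made at the level of the renormalized area difference, which is precisely the quantity controlled by Theorem \ref{thm:area_difference} and whose sign changes at $a=a_{L}$. The global stability assertion in part (1) is immediate: since $a\geq a_{c}$, Theorem \ref{thm:BSE}(2) already gives that $\CC_{a}$ is globally stable, so only the failure of area minimization has to be argued in the range $a_{c}\leq a<a_{L}$.

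The core of the argument is a structural dichotomy for the absolute minimizer. After normalizing the configuration by an element of $\Mob(\B^{3})$ so that $C_{1},C_{2}$ are concentric about a common axis, the boundary data is invariant under all rotations about that axis and under the reflection exchanging the two circles. An absolutely area minimizing surface $\Sigma$ asymptotic to $C_{1}\cup C_{2}$ then exists by the geometric measure theory arguments underlying Theorem \ref{them:Theorem_of_Anderson} together with the distance hypothesis, and after symmetrization it may be taken rotationally symmetric. If $\Sigma$ is disconnected, each component is an area minimizing disk spanning a single round circle, hence a totally geodesic plane by the uniqueness in Theorem \ref{them:Theorem_of_Anderson}, so $\Sigma=P_{1}\cup P_{2}$. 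If $\Sigma$ is connected, it is a rotationally symmetric area minimizing annulus, hence a minimal surface of revolution, i.e.\ a spherical catenoid; being area minimizing it is stable, so it is the stable catenoid $\CC_{a}$ rather than the unstable $\CC_{a'}$ with $a'<a_{c}$ sharing the same asymptotic circles. Consequently the absolute minimum of renormalized area among all surfaces asymptotic to $C_{1}\cup C_{2}$ equals $\min\{\area(\CC_{a}),\,\area(P_{1}\cup P_{2})\}$.

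With the dichotomy in hand the theorem follows by reading off the sign of the area difference from Theorem \ref{thm:area_difference}. For $a_{c}\leq a<a_{L}$ that theorem yields $\area(\CC_{a})>\area(P_{1}\cup P_{2})$, so the two geodesic planes strictly undercut the catenoid and $\CC_{a}$ is not absolutely area minimizing, proving (1); for $a\geq a_{L}$ it yields $\area(\CC_{a})\leq\area(P_{1}\cup P_{2})$, so the catenoid realizes the minimum and is absolutely area minimizing, proving (2). The ``Moreover'' statement is the converse packaging of the same dichotomy: a connected absolutely area minimizing surface asymptotic to two round circles is, by the structural step, some stable catenoid, and by the area comparison its parameter must satisfy $a\geq a_{L}$, while the uniqueness of the stable catenoid at a prescribed distance (Theorem \ref{thm:Berard_Sa_Earp}) pins it down up to isometry.

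I expect the main obstacle to be the structural step, and specifically the claim that a connected absolutely area minimizing annulus with rotationally symmetric boundary at infinity is itself rotationally symmetric and hence a catenoid. The disconnected case reduces cleanly to the uniqueness of area minimizing disks, but ruling out non-symmetric connected minimizers requires either a genuine symmetrization in $\H^{3}$ that preserves the two asymptotic circles while not increasing renormalized area, or a uniqueness theorem for minimal annuli with symmetric boundary at infinity. Either route must be executed with care about interior and boundary regularity and about the precise behavior of the renormalized area near $S_{\infty}^{2}$, so that the resulting comparison is genuinely governed by the finite quantity computed in Theorem \ref{thm:area_difference}.
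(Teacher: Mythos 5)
Your quantitative mechanism is the right one --- the sign of $\varphi$ at $a_{L}$ from Theorem \ref{thm:area_difference} is exactly what separates the two regimes --- but the step you yourself flag as the ``main obstacle'' is a genuine gap, and it is not closed the way you suggest. You need to know that a connected absolutely area minimizing surface asymptotic to $C_{1}\cup C_{2}$ is a spherical catenoid; you propose to get this by a symmetrization in $\H^{3}$ or an unspecified uniqueness theorem, but neither is supplied, and symmetrizing an annulus in $\H^{3}$ while preserving two asymptotic circles and not increasing area is not a standard tool. The paper closes this step by an entirely different route: boundary regularity (Hardt--Lin and its refinements) shows that an (absolutely) area minimizing surface asymptotic to smooth curves is $C^{2}$-regular at infinity, and the Levitt--Rosenberg theorem (Theorem \ref{thm:LR85}) then forces any such connected surface asymptotic to two round circles to be a catenoid (Corollary \ref{cor:LR}). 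Without this input your ``structural dichotomy'' is an assertion rather than a proof, and both part (2) and the ``Moreover'' clause depend on it.

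Two further places where the renormalized-area framing is too loose. First, the comparison surfaces in Theorem \ref{thm:area_difference} are not the planes $P_{1}\cup P_{2}$: the quantity $\Phi(a,r)$ compares $\Sigma_{a,r}=\CC_{a}\cap\Nscr_{r}(\gamma_{0})$ with the geodesic disks $\Delta_{r}^{\pm}$ spanning $\partial\Sigma_{a,r}$, which lie on a different pair of geodesic planes for each $r$; this is precisely what turns the comparison into a legitimate compact-competitor argument, and in case (1) it already gives failure of absolute minimality once $\Phi(a,r)>0$ for some $r$. (The paper additionally constructs the cylinder-plus-punctured-disks annulus $\Pi_{a,r}(s)$ to rule out even homotopical minimality, which your two-disk competitor cannot do since it is not an annulus homotopic to $\Sigma_{a,r}$ rel boundary.) Second, for part (2) the definition of area minimizing is local --- every compact portion must minimize among surfaces with the same boundary --- so a single global inequality of the form $\min\{\area(\CC_{a}),\area(P_{1}\cup P_{2})\}$ does not by itself establish anything about compact portions; the paper instead shows each $\Sigma_{a,r}$ is a minimizing annulus by a barrier argument using the foliation of $\T_{c}$ by the stable catenoids $\{\CC_{a}\}_{a\geq a_{c}}$ (a competing minimal annulus with the same boundary would have to be a piece of an unstable catenoid $\CC_{a'}$ with $a'<a_{c}$, a contradiction), and only then invokes Corollary \ref{cor:LR} to upgrade to the absolute statement.
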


The following theorem is an equivalent form of Theorem \ref{thm:main_theorem_I}.

\begin{theorembis}{thm:main_theorem_I}\label{thm:main_theorem_I_prime}
Let $C_{1}$ and $C_{2}$ be disjoint round circles in $S_{\infty}^{2}$.
\begin{enumerate}
  \item If $d(C_{1},C_{2})=2\varrho(a_{c})$, there exists exactly one
        globally stable spherical catenoid, which is not {\rm(}absolutely{\rm)} area minimizing.
  \item If $2\varrho(a_{L})<d(C_{1},C_{2})<2\varrho(a_{c})$, there exist two
        spherical catenoids such that one is unstable and the other one is globally stable.
        But the stable catenoid is not {\rm(}absolutely{\rm)} area minimizing.
  \item If $0<d(C_{1},C_{2})\leq{}2\varrho(a_{L})$, there exist two spherical
        catenoids such that one is unstable and the other one is
        {\rm(}absolutely{\rm)} area minimizing.
\end{enumerate}

Moreover, if there is any {\rm(}absolutely{\rm)} area minimizing surface in
$\H^3$ asymptotic to $C_{1}\cup{}C_{2}$ in $S_{\infty}^{2}$, then it's a spherical catenoid and
the distance \eqref{eq:distance_circles} between $C_{1}$ and $C_{2}$ is
$\leq{}2\varrho(a_{L})$.
\end{theorembis}

Applying the results in Theorem \ref{thm:main_theorem_I} or
Theorem \ref{thm:main_theorem_I_prime}, we can say that we have solved the special
case of asymptotic Plateau problem when the prescribed boundary data consists of
two disjoint round circles.

Next we will generalize the above results to the case when the prescribed
boundary data consists of two disjoint rectifiable star-shaped Jordan curves.

\begin{definition}[Distance between two Jordan curves]
\label{def:distance_Jordan_curves}
Let $\Gamma_{1}$ and $\Gamma_{2}$ be two disjoint Jordan curves in
$S_{\infty}^{2}$. For $i=1,2$, let $\Delta_{i}$ be the disk component
of $S_{\infty}^{2}\setminus(\Gamma_{1}\cup\Gamma_{2})$ bounded by
$\Gamma_{i}$, and let $C_{i}$ be any round circle contained in $\Delta_{i}$.
The distance between $\Gamma_{1}$ and $\Gamma_{2}$ is defined as follows:
\begin{equation}\label{eq:diatance_between_Jordan_curves}
   d(\Gamma_{1},\Gamma_{2})=
   \inf\{d(C_{1},C_{2})\ |\ C_{i}\subset\Delta_{i}\ \text{for}\
   i=1,2\}\ ,
\end{equation}
where $d(C_{1},C_{2})$ is given by \eqref{eq:distance_circles} for
two disjoint circles $C_{1}$ and $C_{2}$.
\end{definition}

\begin{remark}In \cite[p.430]{dCGT86}, the authors
also defined the distance between Jordan curves in $S_{\infty}^{2}$,
which is different from \eqref{eq:diatance_between_Jordan_curves}.
\end{remark}

A set $\Omega$ in the plane $\R^2$ is called a
\emph{star-shaped domain} if there exists
a point $x_0$ in $\Omega$ such that for each point $x$ in $\Omega$ the line segment
from $x_0$ to $x$ is contained in $\Omega$.
The point $x_0$ is called a \emph{center} of the domain $\Omega$.
A star-shaped domain in the plane may have more than one center. In particular,
any interior point of a convex domain in the plane is its center.

\begin{definition}[Star-shaped Jordan curve]\label{def:star_shaped}
A Jordan curve $\Gamma\subset{}S_{\infty}^{2}$ is called
\emph{star-shaped} if two components $\Omega_{\pm}$ of
$S_{\infty}^{2}\setminus\Gamma$ are all star-shaped domains.
More precisely, there exist points $p_{+}\in\Omega_{+}$ and
$p_{-}\in\Omega_{-}$, a geodesic $\ell\subset\H^3$ connecting
$p_{+}$ and $p_{-}$ (i.e., $\partial_{\infty}\ell=\{p_{+},p_{-}\}$), and
an isometry $\phi:\H^{3}\to\U^{3}$ such that the following conditions
are satisfied:
\begin{enumerate}
  \item $\phi(\ell)$ is the $t$-axis of the upper-half space $\U^{3}$
        (see \eqref{eq:upper_half_space}) with $\phi(p_{+})=0$ and
        $\phi(p_{-})=\infty$.
  \item $\phi(\Omega_{+})\subset\widehat\C$ is a star-shaped domain whose
        center is the origin, and $\phi(\Omega_{-})\subset\widehat\C$ is a
        star-shaped domain whose center is at infinity, that is to say, 
        for any point $x\in\phi(\Omega_{-})$, the portion of the ray passing 
        through $x$ from the origin that starts at $x$ is contained 
        in the domain $\phi(\Omega_{-})$.
\end{enumerate}
The geodesic $\ell$ is called an \emph{axis} of $\Gamma$.
\end{definition}

A Jordan curve $\Gamma\subset{}S_{\infty}^{2}$ is \emph{rectifiable}
if its length is finite with respect to the spherical metric on $S_{\infty}^{2}$.
Our second main result can be stated as follows:

\begin{theorem}\label{thm:main_theorem_II}
Let $\Gamma_{1}$ and $\Gamma_{2}$ be disjoint rectifiable star-shaped
Jordan curves in $S_{\infty}^{2}$. If the distance between $\Gamma_{1}$ and
$\Gamma_{2}$ is bounded from above as follows
\begin{equation}\label{eq:upper_bound}
   d(\Gamma_{1},\Gamma_{2})<{}2\varrho(a_{L})\approx{}0.876895\ ,
\end{equation}
where $\varrho$ is the function defined by \eqref{eq:Gomes function I}
and $a_{L}\approx{}0.847486$ is the constant given by
Theorem \ref{thm:area_difference},
then there exists an embedded annulus-type area minimizing surface
$\Pi\subset\H^3$, which is asymptotic to $\Gamma_{1}\cup\Gamma_{2}$.

Moreover the upper bound in \eqref{eq:upper_bound} is optimal in the following
sense: If there is an area minimizing surface in $\H^3$
asymptotic to two disjoint round circles in $S_{\infty}^{2}$, then the distance
\eqref{eq:distance_circles} between the circles is $\leq{}2\varrho(a_{L})$.
\end{theorem}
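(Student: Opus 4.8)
The plan is to reduce to the already-solved case of two round circles, using an area-minimizing catenoid both as a comparison surface that forces connectedness and, together with the star-shaped hypothesis, as a tool to pin down the topology. Since the distance \eqref{eq:diatance_between_Jordan_curves} is an infimum over round circles $C_i\subset\Delta_i$, the hypothesis \eqref{eq:upper_bound} lets me first select circles $C_1\subset\Delta_1$ and $C_2\subset\Delta_2$ with $d(C_1,C_2)<2\varrho(a_L)$. By Theorem \ref{thm:main_theorem_I} (equivalently Theorem \ref{thm:main_theorem_I_prime}(3)) there is then an \emph{absolutely area-minimizing} spherical catenoid $\CC$ asymptotic to $C_1\cup C_2$, corresponding to a parameter $a>a_L$ (on the decreasing branch of $\varrho$, $d(C_1,C_2)<2\varrho(a_L)$ forces $a>a_L$). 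Because $C_i\subset\Delta_i$, the curves $\Gamma_1,\Gamma_2$ are radially nested between $C_1$ and $C_2$, so $\CC$ is available as a barrier lying to the far side of $\Gamma_1\cup\Gamma_2$.

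To produce a minimizer I would run the usual exhaustion from geometric measure theory: radially project $\Gamma_1\cup\Gamma_2$ onto the geodesic spheres $\partial B_R\subset\B^3$, solve the Plateau problem in $\overline{B_R}$ to obtain least-area integral currents $\Pi_R$, and extract a subsequential limit $\Pi$. Rectifiability of the $\Gamma_i$ yields the mass bounds near infinity, while the monotonicity formula together with $\CC$ and totally geodesic planes as barriers keeps mass from collapsing onto the axis or drifting off. Interior regularity (\cite{Fed69}, as in Theorem \ref{them:Theorem_of_Anderson}) then makes $\Pi$ a smooth embedded minimal surface with $\partial_\infty\Pi=\Gamma_1\cup\Gamma_2$, the boundary behaviour at infinity being controlled as in \cite{HL87}.

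The heart of the argument, and the step I expect to be the main obstacle, is showing that $\Pi$ is \emph{connected} rather than a pair of disks $D_1\sqcup D_2$ asymptotic to $\Gamma_1$ and $\Gamma_2$ separately. I would compare renormalized areas: the content of Theorem \ref{thm:area_difference} is that for the circle configuration the catenoid $\CC$ strictly beats the two asymptotic disks precisely because $a>a_L$, so there is a \emph{definite} area gain from connecting $C_1$ to $C_2$. The star-shaped hypothesis is what lets me transfer this gain from the circles to $\Gamma_1,\Gamma_2$: writing each $\Gamma_i$ as a radial graph over the axis and interpolating across the annular region between $C_i$ and $\Gamma_i$ produces a connected competitor whose excess area over $\CC$ is controlled, so that a disconnected minimizer $D_1\sqcup D_2$ could be strictly improved. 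Hence $\Pi$ is connected. Making this transfer quantitatively sharp — keeping the interpolation cost below the gain guaranteed by the strict inequality $d(\Gamma_1,\Gamma_2)<2\varrho(a_L)$ — is the delicate point.

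Once connectedness is known, the star-shaped condition delivers embeddedness and annulus type. The dilations $(z,t)\mapsto(\lambda z,\lambda t)$ of $\U^3$ are hyperbolic isometries fixing the axis, and applying them to the radial-graph curves $\Gamma_i$ foliates the complement of the axis; the induced family of minimal surfaces, compared with $\Pi$ through the maximum principle, shows that $\Pi$ is a radial graph and therefore embedded with no interior handles, i.e.\ an annulus. Finally, the \emph{moreover} statement is immediate from Theorem \ref{thm:main_theorem_I_prime}: any area-minimizing surface asymptotic to two disjoint round circles is a catenoid $\CC_a$ with $a\geq a_L$, and since $\varrho$ is decreasing on $(a_c,\infty)$ with $a_L>a_c$, the distance \eqref{eq:distance_circles} between the circles equals $2\varrho(a)\leq 2\varrho(a_L)$, which shows the bound in \eqref{eq:upper_bound} is optimal.
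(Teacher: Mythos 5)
There is a genuine gap at exactly the point you flag as ``the delicate point'': the connectedness of the limit. Your construction minimizes among integral currents in $\overline{B_R}$ with boundary a radial projection of $\Gamma_1\cup\Gamma_2$ onto $\partial B_R$, and for that problem nothing prevents the minimizer from being two disks; the renormalized-area interpolation you propose (transferring the gain $-\varphi(a)>0$ from the circles $C_i$ to the curves $\Gamma_i$ by interpolating across the annular regions between $C_i$ and $\Gamma_i$) is never carried out, and it is not clear it can be: the interpolation cost near infinity is not controlled by the hypothesis $d(\Gamma_1,\Gamma_2)<2\varrho(a_L)$ alone. The paper sidesteps this entirely by a different choice of compact boundary data. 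It takes the \emph{unique absolutely area minimizing disks} $\Sigma_1,\Sigma_2$ asymptotic to $\Gamma_1,\Gamma_2$ (which exist and foliate under dilation because the $\Gamma_i$ are star-shaped), shows via a maximum-principle/foliation argument (Proposition \ref{lem:catenoid_intersection}) that the catenoid $\CC$ meets each $\Sigma_i$ transversely in a single essential circle $\alpha_i$ bounding $\Delta_i\subset\Sigma_i$, and then for curves $\gamma_i(s)=\Sigma_i\cap\partial B^3(s)$ builds the explicit connected competitor $S=\overline{\CC'\cup(\Sigma_1'\setminus\Delta_1)\cup(\Sigma_2'\setminus\Delta_2)}$. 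Because $\CC$ is area minimizing, $\area(\CC')<\area(\Delta_1)+\area(\Delta_2)$, whence $\area(S)<\area(\Sigma_1')+\area(\Sigma_2')$; this is precisely the strict Douglas condition, and Meeks--Yau/Almgren--Simon then yield a compact \emph{embedded} area minimizing annulus $\Pi(s)$ with $\partial\Pi(s)=\gamma_1(s)\cup\gamma_2(s)$. The comparison is exact --- no quantitative interpolation estimate is ever needed --- because the competitor disks spanning $\gamma_i(s)$ are literally subdomains of the $\Sigma_i$. The limit $s\to\infty$ is then taken as varifolds, with the uniform local mass bound supplied by the finiteness of the density at infinity of $\Sigma_1,\Sigma_2$ (Theorem \ref{thm:finite_density_at_infinity}, which is where rectifiability of the $\Gamma_i$ enters), and annulus type is preserved in the limit.

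A secondary problem is your embeddedness argument. The claim that the maximum principle against the dilation foliation shows $\Pi$ is a ``radial graph'' is false even for the model: a spherical catenoid is not a Killing graph for the dilation flow along a geodesic joining points of $\Delta_1$ and $\Delta_2$, and its translates under that flow intersect it. In the paper, embeddedness comes for free from the compact Douglas solutions (Meeks--Yau produce embedded annuli in the mean convex region $\Bcal$ bounded by $\Sigma_1\cup\Sigma_2$), and the foliations $\{h_\lambda(\Sigma_i)\}$ are used only to confine any competitor to $\Bcal$, not to establish graphicality. Your treatment of the optimality statement, by contrast, is correct and identical to the paper's: it is an immediate consequence of Theorem \ref{thm:main_theorem_I_prime}.
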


\begin{remark}
The results of boundary regularity in \cite{HL87,Lin89,Lin12} also work for
Theorem \ref{thm:main_theorem_II}.
\end{remark}



\begin{remark}
Coskunuzer proved the following result (see Step 1
in the proof of the Key Lemma in \cite{Cos09}):
Let $\Gamma$ be a Jordan curve in $S_{\infty}^{2}$ with at least
one $C^{1}$-smooth point. If $\Gamma^{\pm}\subset{}S_{\infty}^{2}$
are two Jordan curves in opposite sides of $\Gamma$ and sufficiently close
to $\Gamma$, then there exists an area minimizing annulus
asymptotic to $\Gamma^{+}\cup\Gamma^{-}$.
\end{remark}

\subsection*{Organization of the paper}
In $\S$\ref{sec:prelim}, we review some definitions on minimal surfaces and catenoids.
In $\S$\ref{sec:main_theorem_I}, we shall prove Theorem \ref{thm:area_difference} at first,
which is crucial for proving Theorem \ref{thm:main_theorem_I} in the same section.
In $\S$\ref{sec:main_II}, we shall prove several results before we prove Theorem \ref{thm:main_theorem_II} in the same section.

\subsection*{Acknowledgement}
This project is partially supported by PSC-CUNY Research Award \#61073-0049.
The author also thanks Professor Lin Fang-Hua for confirming that
the key result of Theorem 2.2 in \cite{HL87} works for both homotopically and
homologically area minimizing surfaces in hyperbolic space.

\section{Preliminaries}\label{sec:prelim}

In this section, we shall review the definitions and basic properties of minimal surfaces
in hyperbolic $3$-space $\H^3$.

\subsection{Review of minimal surfaces}

Suppose that $\Sigma$ is a surface immersed
in a complete Riemannian $3$-manifold
$M^3$. We pick up a local orthonormal frame field
$\{e_1,e_2,e_3\}$ for $M^3$ such that, restricted to $\Sigma$,
the vectors $\{e_1,e_2\}$ are tangent to $\Sigma$ and the
vector $e_3$ is perpendicular to $\Sigma$. Let
$A=(h_{ij})_{2\times{}2}$ be the second fundamental
form of $\Sigma$, whose entries $h_{ij}$ are defined by
$h_{ij}=\inner{\nabla_{e_i}e_{3}}{e_{j}}$ for $i,j=1,2$,
where $\nabla$ is the covariant derivative in $M^3$, and
$\inner{\cdot}{\cdot}$ is the metric of $M^3$.
The immersed surface $\Sigma$ is called a \emph{minimal surface} in $M^3$
if its \emph{mean curvature} $H=h_{11}+h_{22}$ is identically equal
to zero.

\begin{definition}[Area minimizing disk]
A compact disk-type minimal surface $\Sigma$ in $\H^{3}$
is called an \emph{area minimizing surface} in $\H^{3}$
if $\Sigma$ has least area among the compact disks in $\H^{3}$ which are
homotopic to $\Sigma$ rel $\partial{}\Sigma$.

A noncompact complete disk-type surface $\Sigma$ is called
an \emph{area minimizing surface} in $\H^{3}$
if any compact disk-type sub-domain of $\Sigma$ is an area minimizing
surface in $\H^{3}$.
\end{definition}

\begin{definition}[Area minimizing annulus]\label{def:least_area_annulus}
Let $S$ be a compact annulus-type minimal surface immersed in $\H^3$,
whose boundary consists of two disjoint Jordan curves $C_1,C_2$, and
let $D_1,D_2$ be two area minimizing disks spanning $C_1,C_2$ respectively.
The annulus $S$ is called an \emph{area minimizing surface} in $\H^3$ if
\begin{enumerate}
  \item $\area(S)<\area(D_{1})+\area(D_{2})$, and
  \item $\area(S)\leq\area(S')$ for each annulus $S'$ homotopic
        to $S$ rel $\partial{}S$,
\end{enumerate}
where $\area(\cdot)$ denotes the area of the surfaces in $\H^3$.

A noncompact complete minimal annulus $\Pi\subset\H^3$ whose asymptotic
boundary consists of the union of two disjoint Jordan curves
$\Gamma_{1}$ and $\Gamma_{2}$ in $S_{\infty}^{2}$ is called
an \emph{area minimizing surface}
if any compact annulus-type subdomain of $\Pi$, which is
homotopically equivalent to $\Pi$, is a compact area minimizing annulus.
\end{definition}

\begin{remark}Condition (1) in Definition \ref{def:least_area_annulus} is
necessary (see \cite{Dou31,AS79,MY1982(t)}). See also the proof of
Theorem \ref{thm:main_theorem_I}.
\end{remark}

\begin{definition}[Absolutely area minimizing surface]
\label{def:absolutely_area_minimizing}
Suppose $S\subset\H^{3}$ is a compact surface
with boundary. The surface $S$ is called \emph{absolutely area minimizing}
if $S$ has least area among all compact surfaces with the same boundary,
where these surfaces are not necessarily homotopoic to $S$ rel $\partial{}S$.

A noncompact complete surface $\Sigma$ in $\H^3$ is called an
\emph{absolutely area minimizing surface} provided
each compact portion of it is absolutely area minimizing.
\end{definition}


\begin{notion}
In the literature, an area minimizing surface is also called a
\emph{homotopically area minimizing surface}, and
an absolutely area minimizing surface is also called a
\emph{homologically area minimizing surface}.
\end{notion}

\subsection{Spherical catenoids in $\H^3$}\label{subsec:catenoids}
In this subsection, we shall review some very basic properties of
catenoids defined in the Poincar{\'e} ball model of $\H^3$.
See also \cite{Mor81,Hsi82,dCD83,Gom87,BSE09,BSE10,Wan19}.

Let $G\cong\SO(2)$ be a subgroup of $\Mob(\B^{3})$ that leaves
a geodesic $\gamma_{0}\subset\B^{3}$ pointwise fixed. We call $G$ the
\emph{spherical group} of $\B^{3}$ and $\gamma$ the
\emph{rotation axis} of $G$.
For two round circles $C_{1}$ and $C_{2}$ in $\B^{3}$, if there is a
geodesic $\gamma_{0}$,
such that both $C_{1}$ and $C_{2}$ are invariant under the spherical
group that fixes $\gamma_{0}$ pointwise, then $C_{1}$
and $C_{2}$ are said to be \emph{coaxial}, and $\gamma_{0}$ is
called the {\em rotation axis} of $C_{1}$ and $C_{2}$.
It's well known that any two disjoint round circles $C_{1}$ and $C_{2}$ in
$S_{\infty}^{2}$ are always coaxial (see \cite{Wan19}).

Suppose that $G$ is the spherical group of $\B^{3}$ associated with
the geodesic
\begin{equation}\label{eq:rotation axis}
   \gamma_{0}=\{(u,0,0)\in\B^3\ |\ -1<u<1\}\ ,
\end{equation}
then we have $\B^3/G\cong\B_{+}^2$, where
$\B_{+}^2:=\{(u,v)\in\B^{2}\subset\B^{3}\ |\ v\geq{}0\}$
is still considered as a subset of $\B^3$.
For any point $p=(u,v)\in\B_{+}^2$, there is a unique
geodesic segment $\gamma'$ through $p$ which is
perpendicular to $\gamma_{0}$ at $q$.
Set $x=\dist(O,q)$ and $y=\dist(p,q)=\dist(p,\gamma_{0})$
(see Figure~\ref{fig:intrinsic metric}).
It's well known that $\B_{+}^2$ can be equipped with
the \emph{metric of warped product} in terms
of the parameters $x$ and $y$, that is, $ds^2=\cosh^{2}y\cdot{}dx^2+dy^2$,
where $dx$ represents the hyperbolic metric on the geodesic
$\gamma_0$ defined in \eqref{eq:rotation axis}.

If $\CC$ is a minimal surface of revolution in $\B^3$ with
respect to the axis $\gamma_{0}$, where $\gamma_{0}$ is defined by
\eqref{eq:rotation axis}, then it is called a \emph{catenoid}
and the curve $\sigma=\CC\cap\B_{+}^{2}$ is called the
\emph{generating curve} or a \emph{catenary} of $\CC$.
Let $\sigma\subset\B_{+}^{2}$ be the generating curve of a
minimal catnoid $\CC$. Suppose that the parametric equations
of $\sigma$ are given by:
$x=x(s)$ and $y=y(s)$, where $s\in(-\infty,\infty)$ is an
arc length parameter of $\sigma$. By the arguments in
\cite[pp. 486--488]{Hsi82}, the curve $\sigma$ satisfies the
following equations
\begin{equation}\label{eq:differential equations of Pi}
   \frac{2\pi\sinh{}y\cdot\cosh^{2}y}
   {\sqrt{\cosh^2{}y+(y')^2}}=
   2\pi\sinh{}y\cdot\cosh{}y\cdot\sin\theta=k\
   (\text{constant})\ ,
\end{equation}
where $y'=dy/dx$ and $\theta$ is the angle between the tangent
vector of $\sigma$ and the vector $e_y=\partial/\partial{}y$
at the point $(x(s),y(s))$ (see Figure~\ref{fig:diff eq of sigma}).

\begin{figure}[htbp]
\begin{center}
  \begin{minipage}[tb]{0.5\textwidth}
  \centering
  \includegraphics[scale=0.8]{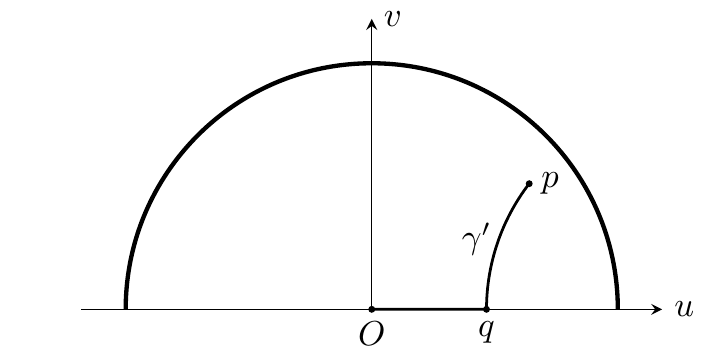}
  \caption{The new coordinates: $x=\dist(O,q)$ and $y=\dist(p,q)$.}
  \label{fig:intrinsic metric}
  \end{minipage}%
  \begin{minipage}[tb]{0.5\textwidth}
  \centering
  \includegraphics[scale=0.7]{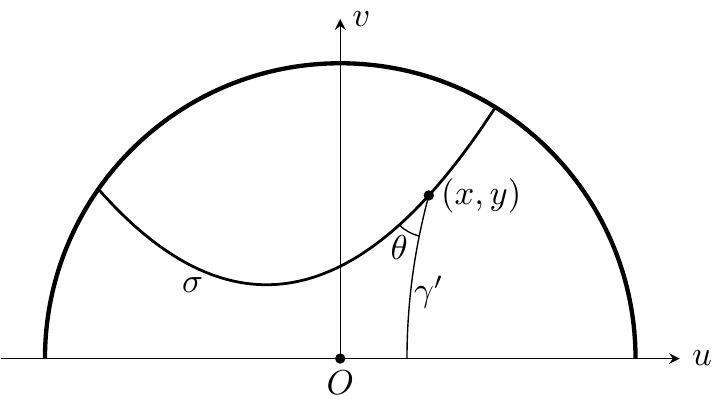}
  \caption{$\theta$ is the angle between $\sigma$ and $\gamma'$.}
  \label{fig:diff eq of sigma}
  \end{minipage}
\end{center}
\end{figure}

By the arguments in \cite[pp.54--58]{Gom87}), up to isometry,
we can assume that the curve $\sigma$ is only symmetric about the $v$-axis
and intersects the $v$-axis orthogonally at $y_0=y(0)$,
and so $y'(0)=0$.
Now we solve for $dx/dy$ in terms of $y$
in \eqref{eq:differential equations of Pi} and integrate
$dx/dy$ from $y_0$ to $y$ for any $y\geq{}y_0$ (see \cite{Hsi82} or \cite{Wan19}),
then we have the following equality
\begin{equation}\label{eq: catenary equation}
   x(y)=\int_{y_0}^{y}\frac{\sinh(2y_0)}{\cosh{}t}
             \frac{dt}{\sqrt{\sinh^2(2t)-\sinh^2(2y_0)}}\ .
\end{equation}
The limit of the function $x(y)$ defined by \eqref{eq: catenary equation}
is finite as $y\to\infty$ for any fixed $y_{0}>0$.
Replacing the initial data $y_0$ by a parameter $a\in(0,\infty)$ in
\eqref{eq: catenary equation}, we can define a function $\varrho(a)$ of
the parameter $a$ as follows (see Figure \ref{fig:Gomes_function}):
\begin{equation}\label{eq:Gomes function I}
   \varrho(a)=\int_{a}^{\infty}\frac{\sinh(2a)}{\cosh{}t}
              \frac{dt}{\sqrt{\sinh^2(2t)-\sinh^2(2a)}}\ .
\end{equation}
The function \eqref{eq:Gomes function I} has a unique critical value
$a_{c}\approx{}0.49577$ so that $\varrho(a)<\varrho(a_{c})$ for all
$a\in(0,a_{c})\cup{}(a_{c},\infty)$ by \cite[Lemma 3.3]{Wan19}.

Let $\sigma_a\subset\B_{+}^2$ be the catenary defined by 
\eqref{eq: catenary equation} for $y_{0}=a$,
which is symmetric about the $v$-axis and whose initial
data is $a$ (actually the hyperbolic distance between $\sigma_{a}$
and the origin of $\B_{+}^2$ is equal to $a$).

\begin{definition}\label{def:sphereical catenoid with parameter}
For $0<a<\infty$, the surface of revolution around the axis $\gamma_{0}$ in
\eqref{eq:rotation axis} generated by the catenary $\sigma_a$
is called a \emph{catenoid}, which is denoted by $\CC_{a}$.
\end{definition}


\begin{notion}\label{notion_2}
In this paper, let $\mathscr{S}$ denote the set of the spherical catenoids
in $\B^3$ that have the \emph{same} rotation axis $\gamma_{0}$ (see \eqref{eq:rotation axis})
and the \emph{same} symmetric plane
$P_{0}=\{(u,v,w)\in\R^3\ |\ v^{2}+w^{2}<1\ \text{and}\ u=0\}$.
Let
\begin{equation}\label{eq:foliated_region}
   \T_{c}:=\cup\{\CC_{a}\in\mathscr{S}\,:\,a\geq{}a_{c}\}
\end{equation}
be a subregion of $\B^3$.
The region $\T_{c}$ is foliated by the spherical
catenoids $\CC_{a}$ in $\mathscr{S}$ for all $a\geq{}a_{c}$
according to the three dimensional version of
the first statement in \cite[Proposition 4.8]{BSE10}.
\end{notion}

\begin{figure}[htbp]
  \begin{center}
     \includegraphics[scale=1]{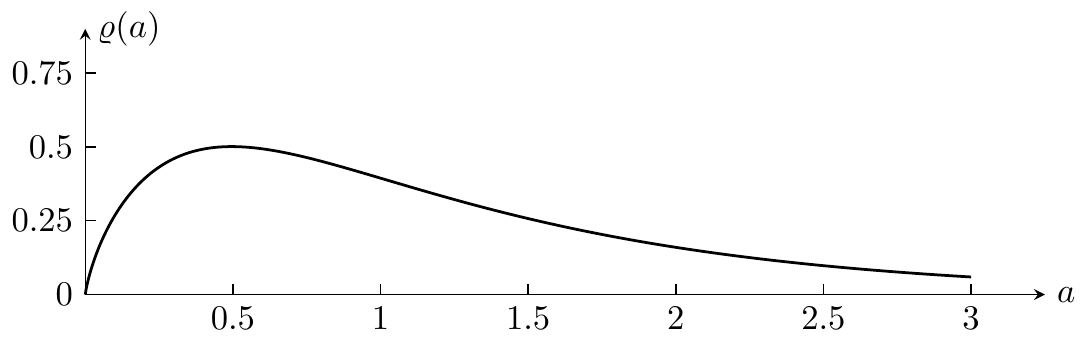}
  \end{center}
  \caption{The graph of the function $\varrho(a)$ defined by
  \eqref{eq:Gomes function I} for
  $a\in[0,3]$. This function $\varrho(a)$ has a unique critical number.}
  \label{fig:Gomes_function}
\end{figure}

\section{Existence of area minimizing spherical catenoids in $\H^3$}
\label{sec:main_theorem_I}

In this section we shall prove Theorem \ref{thm:main_theorem_I} in
$\S$\ref{subsec:main_theorem_I}.

\subsection{Area difference between catenoids and geodesic planes}
Let $\CC_{a}\in\mathscr{S}$ be a spherical catenoid in $\B^3$, whose rotation
axis is $\gamma_0$. For any $r\geq{}a$, let
\begin{equation}\label{eq:Sigma_a_r}
   \Sigma_{a,r}=\CC_{a}\cap\Nscr_{r}(\gamma_{0})\ ,
\end{equation}
where $\Nscr_{r}(\gamma_{0})$ denotes the $r$-neighborhood of $\gamma_{0}$
in $\B^3$. Then $\Sigma_{a,r}$ is a compact surface of revolution whose boundary
consists of two round circles $C^{\pm}$, which are invariant under the
spherical group of the rotation axis $\gamma_{0}$.
The area of $\Sigma_{a,r}$ can be calculated by coarea formula (see \cite[(4.8)]{Wan19})
\begin{equation}\label{eq:compact_annulus_area}
   \area(\Sigma_{a,r})=
      \int_{a}^{r}\left(4\pi\sinh{}t\cdot
      \frac{\sinh(2t)}{\sqrt{\sinh^2(2t)-\sinh^2(2a)}}\right)dt\ .
\end{equation}

Let $\Delta_{r}^{\pm}\subset\B^3$ be the totally geodesic disks bounded by $C^{\pm}$
respectively, then the area of $\Delta_{r}^{\pm}$ is given by
$\area(\Delta_{r}^{+})=\area(\Delta_{r}^{-})=2\pi(\cosh{}r-1)$
(see \cite[Theorem 7.2.2]{Bea95})
since the radii of $\Delta_{r}^{\pm}$ are both equal to $r$.

\begin{lemma}Let
$\Phi(a,r)=\area(\Sigma_{a,r})-(\area(\Delta_{r}^{+})+\area(\Delta_{r}^{-}))$
be the area difference, then $\Phi(\cdot,r)$ is increasing and bounded for
$a\leq{}r<\infty$.
\end{lemma}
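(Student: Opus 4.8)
The plan is to insert the explicit formulas already available in the excerpt and to treat $\Phi$ as a function of the radius $r$, with the parameter $a$ held fixed so that $r$ ranges over $[a,\infty)$ (this is the range $a\leq r<\infty$ appearing in the statement). Substituting \eqref{eq:compact_annulus_area} together with $\area(\Delta_{r}^{+})+\area(\Delta_{r}^{-})=4\pi(\cosh r-1)$ yields
\[
   \Phi(a,r)=\int_{a}^{r}4\pi\sinh t\cdot
   \frac{\sinh(2t)}{\sqrt{\sinh^{2}(2t)-\sinh^{2}(2a)}}\,dt-4\pi(\cosh r-1).
\]
Since the geodesic-disk term can be written as $4\pi(\cosh r-1)=4\pi(\cosh a-1)+\int_{a}^{r}4\pi\sinh t\,dt$, I would rewrite this as
\[
   \Phi(a,r)=\int_{a}^{r}4\pi\sinh t\left(
   \frac{\sinh(2t)}{\sqrt{\sinh^{2}(2t)-\sinh^{2}(2a)}}-1\right)dt
   -4\pi(\cosh a-1),
\]
which already displays the $r$-dependent part as the integral of a nonnegative function on $(a,\infty)$.

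For the monotonicity claim I would simply differentiate the first expression in $r$. By the fundamental theorem of calculus,
\[
   \frac{\partial\Phi}{\partial r}(a,r)=4\pi\sinh r\left(
   \frac{\sinh(2r)}{\sqrt{\sinh^{2}(2r)-\sinh^{2}(2a)}}-1\right),
\]
and since $\sinh(2r)>\sqrt{\sinh^{2}(2r)-\sinh^{2}(2a)}$ for every $r>a$, this derivative is strictly positive. Hence $\Phi$ is increasing in $r$ on $(a,\infty)$.

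For boundedness it suffices, in view of the second expression above, to show that the improper integral $\int_{a}^{\infty}4\pi\sinh t\big(\sinh(2t)/\sqrt{\sinh^{2}(2t)-\sinh^{2}(2a)}-1\big)\,dt$ converges; then $\Phi(a,r)$ is bounded above by this finite limit. There are two regimes to control. Near the lower endpoint, a first-order expansion of $\sinh^{2}(2t)-\sinh^{2}(2a)$ shows the bracket behaves like a constant multiple of $(t-a)^{-1/2}$, which is integrable. As $t\to\infty$, rationalizing via $\sinh(2t)-\sqrt{\sinh^{2}(2t)-\sinh^{2}(2a)}=\sinh^{2}(2a)/\big(\sinh(2t)+\sqrt{\sinh^{2}(2t)-\sinh^{2}(2a)}\big)$ shows the bracket is $O(e^{-4t})$, so the whole integrand is $O(e^{-3t})$ and decays exponentially. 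The convergence of this improper integral—in particular the large-$t$ asymptotic estimate—is the only point requiring genuine estimation, whereas the monotonicity is immediate from the sign of $\partial\Phi/\partial r$. Combining the two parts shows that $\Phi$ increases in $r$ from $\Phi(a,a)=-4\pi(\cosh a-1)$ up to a finite limit, hence is increasing and bounded on $[a,\infty)$.
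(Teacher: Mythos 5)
Your proof is correct, and the decomposition and the monotonicity argument coincide with the paper's: the author also rewrites $\Phi(a,r)$ as $4\pi\int_{a}^{r}\sinh t\bigl(\sinh(2t)/\sqrt{\sinh^{2}(2t)-\sinh^{2}(2a)}-1\bigr)dt-4\pi(\cosh a-1)$ and observes that the integrand is positive, which is exactly the content of your computation of $\partial\Phi/\partial r$. The two arguments diverge only on boundedness from above. The paper does not estimate the improper integral directly; it invokes Lemma 4.1 and (4.8) of the author's earlier paper [Wan19] to get the quantitative bound $\Phi(a,r)\leq 4\pi K\cosh a-4\pi(\cosh a-1)$ with $K$ the explicit constant \eqref{eq:constant_K}. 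That sharper form is not needed for the lemma itself, but it is reused verbatim in Step~2 of the proof of Theorem \ref{thm:area_difference} to show $\varphi(a)<0$ for $a\geq a_{l}$, which is why the author records it here. Your route is self-contained: you check integrability at the endpoint $t=a$ (where the bracket has an integrable $(t-a)^{-1/2}$ singularity, a point the paper leaves implicit) and you rationalize to get the $O(e^{-4t})$ decay of the bracket, hence $O(e^{-3t})$ decay of the integrand at infinity. Both asymptotic claims are right, so your argument establishes boundedness without appealing to [Wan19]; what it gives up is the explicit upper bound in terms of $K$ that the paper exploits later.
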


\begin{proof}Actually, the function $\Phi$ is given by
\begin{equation*}
\begin{aligned}
   \Phi(a,r)=&\,4\pi\int_{a}^{r}\frac{\sinh{}t\cdot\sinh(2t)}
              {\sqrt{\sinh^2(2t)-\sinh^2(2a)}}\,dt-4\pi(\cosh{}r-1)\\
            =&\,4\pi\int_{a}^{r}\sinh{}t\cdot
                \left(\frac{\sinh(2t)}{\sqrt{\sinh^2(2t)-\sinh^2(2a)}}-1\right)dt\\
             &\,-4\pi(\cosh{}a-1)\ ,
\end{aligned}
\end{equation*}
for $r\geq{}a$. The first integral term in the second equality is positive for all
$r\geq{}a$, so $\Phi(\cdot,r)$ is an increasing function on $r$ for each fixed $a>0$.

For any fixed $a>0$, the function $\Phi(a,r)$ is bounded for any $r\geq{}a$.
First of all, $\Phi(a,r)$ is bounded from below, since
$\Phi(a,r)\geq{}-4\pi(\cosh{}a-1)$ for $r\geq{}a$.
On the other hand, by Lemma 4.1 and (4.8) in \cite{Wan19}, we have
\begin{align*}
   \Phi(a,r)
      \leq&\,4\pi\int_{a}^{\infty}\sinh{}t\cdot
              \left(\frac{\sinh(2t)}{\sqrt{\sinh^2(2t)-\sinh^2(2a)}}-1\right)dt\\
          &\,-4\pi(\cosh{}a-1) \\
      \leq&\,4\pi{}K\cosh{}a-4\pi(\cosh{}a-1)\ ,
\end{align*}
for any $r\geq{}a$, where $K$ is defined by \eqref{eq:constant_K}.
\end{proof}

\begin{figure}[htbp]
  \begin{center}
     \includegraphics[scale=0.9]{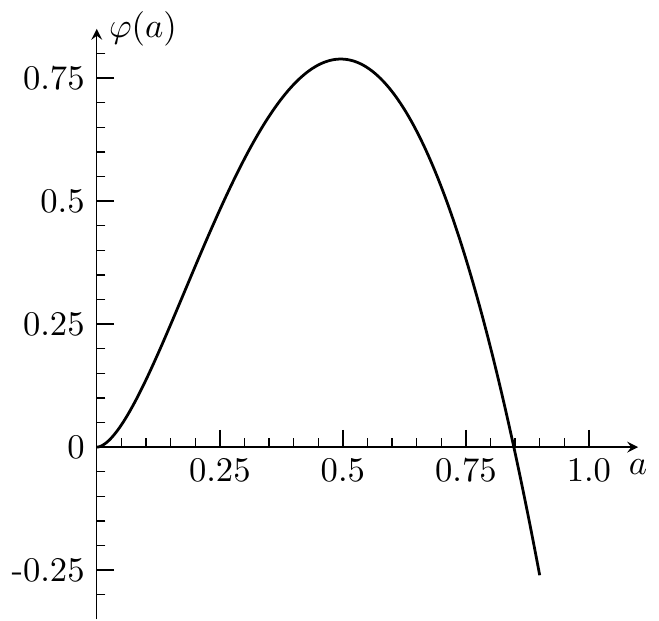}
  \end{center}
  \vspace{-0.5cm}
  \caption{The graph of the function $\varphi(a)$ defined by
  \eqref{eq:area_difference} for $a\in[0,0.9]$.}
  \label{fig:area_difference_function}
\end{figure}

Therefore, for any fixed $a\geq{}0$, the the limit
of $\Phi(a,r)$ as $r\to\infty$ is well defined. Let
$\varphi(a)=\lim\limits_{r\to\infty}\Phi(a,r)$, then $\varphi(a)$
is a function of $a$, which can be written as follows
(see Figure \ref{fig:area_difference_function})
\begin{equation}\label{eq:area_difference}
\begin{split}
   \varphi(a)=&\,4\pi\int_{a}^{\infty}\sinh{}t\cdot
                  \left(\frac{\sinh(2t)}{\sqrt{\sinh^2(2t)-\sinh^2(2a)}}-1\right)dt\\
              &\,-4\pi(\cosh{}a-1)\ .
\end{split}
\end{equation}


\begin{theorem}\label{thm:area_difference}
The function $\varphi(a)$ defined by \eqref{eq:area_difference}, $a\geq{}0$,
has a unique zero $a_{L}>a_{c}>0$ such that the following
statements are true.
\begin{enumerate}
  \item If $0<a<a_{L}$, then $\varphi(a)>0$.
  \item If $a>a_{L}$, then $\varphi(a)<0$ .
\end{enumerate}
\end{theorem}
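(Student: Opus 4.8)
The plan is to study the rescaled function $g(a)=\varphi(a)/(4\pi)=I(a)-(\cosh a-1)$, where $I(a)=\int_{a}^{\infty}\sinh t\big(\sinh(2t)/\sqrt{\sinh^{2}(2t)-\sinh^{2}(2a)}-1\big)\,dt$ is the positive integral term in \eqref{eq:area_difference}, and to record first its two boundary regimes. Setting $a=0$ collapses the integrand to zero and leaves $\cosh 0-1=0$, so $g(0)=0$; this is exactly why the asserted zero $a_{L}$ must be sought in the open half-line $(0,\infty)$. At the other end, the upper estimate $I(a)\leq K\cosh a$ from Lemma 4.1 of \cite{Wan19} (already invoked in the Lemma preceding this theorem) gives $g(a)\leq (K-1)\cosh a+1$, and since $K\approx 0.40093<1$ by \eqref{eq:constant_K} this forces $g(a)\to-\infty$ as $a\to\infty$. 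Thus $\varphi$ is eventually negative, which secures statement (2) far out.

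Next I would show $\varphi(a)>0$ for small $a>0$. The convenient device is the substitution $\sinh(2t)=c\cosh\xi$ with $c=\sinh(2a)$, which rewrites $I(a)=\tfrac{c}{2}\int_{0}^{\infty}\sqrt{(\sqrt{1+c^{2}\cosh^{2}\xi}-1)/2}\,(1+c^{2}\cosh^{2}\xi)^{-1/2}e^{-\xi}\,d\xi$, an integral with fixed limits and no endpoint singularity. Splitting the $\xi$-range at $\xi\approx\log(1/c)$ and estimating each piece shows that $I(a)$ grows like $\tfrac12 a^{2}\log(1/a)$ as $a\to 0^{+}$, which dominates $\cosh a-1\sim\tfrac12 a^{2}$; hence $g(a)>0$ for all sufficiently small $a>0$. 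Combined with the first paragraph and the intermediate value theorem, $\varphi$ has at least one zero in $(0,\infty)$.

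The heart of the matter is uniqueness together with the sign pattern (1)--(2). I would prove that every zero of $g$ in $(0,\infty)$ is a strict down-crossing, i.e.\ $g'(a)<0$ whenever $g(a)=0$; this alone excludes a second zero (two consecutive down-crossings would force an up-crossing between them) and, with existence, pins down a single $a_{L}$ at which $g$ passes from positive to negative, whence $g>0$ on $(0,a_{L})$ and $g<0$ on $(a_{L},\infty)$. Since $g(a)=0$ means $I(a)=\cosh a-1$, the required inequality $g'(a)=I'(a)-\sinh a<0$ is equivalent to $I'(a)(\cosh a-1)<I(a)\sinh a$, that is, to the strict decrease of the ratio $R(a)=I(a)/(\cosh a-1)$ at that point, and to compute $I'(a)$ I would differentiate the fixed-limit form above under the integral sign. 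It then remains to place $a_{L}$ relative to $a_{c}$: because $g>0$ precisely on $(0,a_{L})$, the inequality $a_{c}<a_{L}$ follows once one verifies $\varphi(a_{c})>0$, which I would check by a direct estimate (or numerical evaluation) of \eqref{eq:area_difference} at $a=a_{c}\approx 0.49577$.

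The main obstacle is the uniqueness inequality $R'(a)<0$ (equivalently $g'(a)<0$ at zeros of $g$). Differentiating $I$ in $c$ produces an integrand whose sign is governed by the factor $2-\sqrt{1+c^{2}\cosh^{2}\xi}$, which is positive for small $\xi$ and negative for large $\xi$; consequently the pointwise sign of $I'(a)$ is indefinite and no purely local argument will close the estimate. The resolution I would pursue is global: weigh the positive part $I(a)$ against $\cosh a-1$ over the whole half-line using the explicit constant $K$ of \eqref{eq:constant_K} together with the known monotonicity of $\varrho$ recorded after \eqref{eq:Gomes function I} (increasing on $(0,a_{c})$, decreasing on $(a_{c},\infty)$), so that the competing contributions are compared in aggregate rather than integrand-by-integrand. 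This is the step at which the numerical value $a_{L}\approx 0.847486$ is actually located, and where I expect the delicate estimation to concentrate.
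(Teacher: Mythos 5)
Your outline handles the two easy regimes correctly ($\varphi(a)\to-\infty$ via the bound $I(a)\leq K\cosh a$ is exactly the paper's Step~2, and the small-$a$ asymptotics $I(a)\sim\tfrac12a^{2}\log(1/a)$ plausibly give positivity near $0$), but it leaves the decisive step unproven. Everything — uniqueness of $a_{L}$ and the sign pattern (1)--(2) — is made to rest on the claim that $g'(a)<0$ at every zero of $g$, and you concede yourself that the pointwise sign of $I'(a)$ is indefinite and that you have no argument closing the estimate; the proposed ``global weighing'' against $K$ and the monotonicity of $\varrho$ is a hope, not a proof, and the monotonicity of $\varrho$ (a completely different integral) gives no evident control on $I'$ at a zero of $g$. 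As written, the proposal therefore does not establish the theorem.

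For comparison, the paper closes this gap by two ideas absent from your plan. First, positivity on the whole interval $(0,a_{c})$ — not merely near $0$ — is proved by a geometric contradiction: if $\varphi(a)\leq 0$ for some $a<a_{c}$, then $\area(\Sigma_{a,r})<\area(\Delta_{r}^{+})+\area(\Delta_{r}^{-})$ for all $r>a$, so by the existence theory for area minimizing annuli in mean convex regions there is a compact area minimizing annulus with boundary $\partial\Sigma_{a,r}$; by the symmetrization arguments of \cite{Wan19} it is a piece of some catenoid $\CC_{a'}$ with $a'<a<a_{c}$, which is unstable by Theorem \ref{thm:BSE} and Proposition 4.8 of \cite{BSE10} — a contradiction. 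Second, uniqueness is obtained not from a down-crossing inequality but from \emph{concavity}: a direct (and delicate) estimate of $\varphi''$ shows $\varphi''(a)<0$ for all $a>a_{0}$, where $a_{0}<a_{c}$ is the unique zero of the explicit elementary function of Lemma \ref{lem:MVT}; a concave function that is positive at a point and tends to $-\infty$ has exactly one zero on $(a_{0},\infty)$, and combined with Step~1 (and the numerical check $\varphi(A_{3})>0$ used to get $\varphi(a_{c})>0$) this locates $a_{L}>a_{c}$. If you wish to salvage your route, the concrete estimate you are missing is precisely this concavity statement, which in particular implies your down-crossing property on $(a_{0},\infty)$.
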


\begin{remark}
By numerical computation, $a_{L}\approx{}0.847486$.
It seems that the critical number of the function $\varphi(a)$ is also equal to $a_c$.
\end{remark}

\begin{lemma}\label{lem:MVT}
Let
\begin{equation*}
   f(x)=-30\cosh(3x)-18\cosh(5x)+10\sinh(7x)+15(1-K)\cosh(8x)
\end{equation*}
be a function defined for $x\geq{}0$,
where $K\approx{}0.40093$ is defined by \eqref{eq:constant_K}.
Then $f(x)$ is increasing on $[0,\infty)$, and has
exactly one zero $a_{0}\in(0,a_{c})$.
\end{lemma}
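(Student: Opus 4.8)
The plan is to prove the two assertions separately: first establish that $f$ is strictly increasing on $[0,\infty)$ by showing $f'>0$, and then locate the unique zero through the intermediate value theorem together with two sign computations. Strict monotonicity does most of the work, since it immediately forces $f$ to have \emph{at most} one zero.

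For monotonicity I would differentiate to obtain
\[
   f'(x) = -90\sinh(3x)-90\sinh(5x)+70\cosh(7x)+120(1-K)\sinh(8x),
\]
for $x\ge 0$. The negative terms grow only like $e^{3x}$ and $e^{5x}$, while the positive terms grow like $e^{7x}$ and $e^{8x}$, so positivity is expected; the point is to make this elementary rather than merely asymptotic. The key estimate is the addition-formula inequality $\sinh(8x)=\sinh(5x)\cosh(3x)+\cosh(5x)\sinh(3x)\ge \sinh(5x)+\sinh(3x)$ for $x\ge 0$, which uses only $\cosh\ge 1$. Substituting it and writing $\beta:=90-120(1-K)$ gives
\[
   f'(x)\ge 70\cosh(7x)-\beta\bigl(\sinh(5x)+\sinh(3x)\bigr).
\]
Since $\sinh(7x)\ge\sinh(5x)\ge\sinh(3x)\ge 0$ for $x\ge0$, one has $70\cosh(7x)>70\sinh(7x)\ge 35\bigl(\sinh(5x)+\sinh(3x)\bigr)$, so $f'(x)>(35-\beta)\bigl(\sinh(5x)+\sinh(3x)\bigr)\ge 0$ provided $\beta<35$. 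The inequality $\beta<35$ is exactly $120(1-K)>55$, i.e.\ $K<13/24\approx 0.5417$, which holds since $K\approx 0.40093$ by \eqref{eq:constant_K}. Hence $f'>0$ and $f$ is strictly increasing.

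It remains to show $f$ has a zero and that it lies in $(0,a_c)$. At the left endpoint, $f(0)=-30-18+0+15(1-K)=-33-15K<0$ because $K>0$. For the right endpoint it suffices, by monotonicity, to exhibit one rational $q<a_c$ with $f(q)>0$: taking $q=0.3$, a direct evaluation of the four hyperbolic terms gives $f(0.3)>0$ (numerically $f(0.3)\approx 4.8$), while $0.3<a_c\approx 0.49577$. The intermediate value theorem then yields a zero $a_0\in(0,q)\subset(0,a_c)$, unique by strict monotonicity, and statements (1) and (2) follow from the sign of the increasing function $f$ on either side of $a_0$. One could instead evaluate $f$ directly at $a_c$, where $f(a_c)\approx 220>0$, but the rational choice avoids relying on the transcendental value of $a_c$.

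The only genuinely delicate points are numerical, and I expect the localization of the zero to be the main obstacle rather than the monotonicity. Rigor in the monotonicity step reduces to the single crude bound $K<13/24$, immediate from any decimal estimate of the integral \eqref{eq:constant_K}. Rigor in the localization reduces to the certified inequality $f(q)>0$ at a rational $q$ that is provably below $a_c$, which needs only a modest lower bound on $a_c$ inherited from \cite{Wan19}. Thus the task is to package these finite hyperbolic-function evaluations as certified inequalities rather than approximations; no structural difficulty arises once $f'>0$ is in hand.
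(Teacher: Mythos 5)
Your proposal is correct, and it reaches the conclusion by a genuinely different route from the paper. For monotonicity, the paper does not work with $f'$ as a whole: it splits $f=f_{1}+f_{2}$ with $f_{1}(x)=-14\cosh(5x)+10\sinh(7x)$ and $f_{2}(x)=-30\cosh(3x)-4\cosh(5x)+15(1-K)\cosh(8x)$, shows $f_{1}'>0$ from $\cosh(7x)\geq\cosh(5x)>\sinh(5x)$, and handles $f_{2}$ by a second-derivative argument ($f_{2}''>0$ together with $f_{2}'(0)=0$). Your direct estimate on $f'$ via the addition formula $\sinh(8x)=\sinh(5x)\cosh(3x)+\cosh(5x)\sinh(3x)\geq\sinh(5x)+\sinh(3x)$, reducing everything to the single inequality $K<13/24$, is arguably cleaner and avoids the somewhat ad hoc decomposition; do note that your constant $\beta=90-120(1-K)=120K-30\approx 18.1$ is positive, so the final step $70\cosh(7x)>35\bigl(\sinh(5x)+\sinh(3x)\bigr)$ is genuinely needed and is correct. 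For locating the zero, the paper's choice of the test point $\log(3/2)$ is a deliberate trick: all the hyperbolic terms become rational there, so $f(\log(3/2))>0$ reduces to the exact rational inequality $K<171374697/215561285\approx 0.795$, which is far from sharp and needs only a crude bound on $K$. Your choice $q=0.3$ works (indeed $f(0.3)\approx 4.8$), but it requires certified decimal evaluations of $\cosh(0.9)$, $\cosh(1.5)$, $\sinh(2.1)$, $\cosh(2.4)$ and amounts to the tighter requirement $K<0.459$; both choices equally require a lower bound $a_{c}>q$ imported from the cited reference. So the two proofs buy slightly different things: yours minimizes structural bookkeeping in the monotonicity step, while the paper's minimizes the numerical certification burden in the localization step.
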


\begin{proof}It's easy to verify that the following functions
\begin{equation*}
\begin{aligned}
   f_{1}(x)&=-14\cosh(5x)+10\sinh(7x) \\
   f_{2}(x)&=-30\cosh(3x)-4\cosh(5x)+15(1-K)\cosh(8x)
\end{aligned}
\end{equation*}
are increasing for $x\geq{}0$, so is $f(x)=f_{1}(x)+f_{2}(x)$ for $x\geq{}0$.
Actually direct computation shows that
$f_{1}'(x)=-70\sinh(5x)+70\cosh(7x)>0$ for all $x\geq{}0$,
where we use the inequalities $\cosh(7x)\geq{}\cosh(5x)>\sinh(5x)$ for all $x\geq{}0$,
so $f_{1}(x)$ is increasing on $[0,\infty)$. Direct computation shows that
\begin{equation*}
\begin{aligned}
   f_{2}''(x)&=-270\cosh(3x)-100\cosh(5x)+960(1-K)\cosh(8x)\\
             &>-270\cosh(3x)-100\cosh(5x)+480\cosh(8x)
\end{aligned}
\end{equation*}
is positive for all $x\geq{}0$, since $\cosh(8x)>\cosh(5x)>\cosh(3x)$
for all $x>0$.
It's easy to verify that $f_{2}'(0)=0$,
so $f_{2}'(x)>0$ for all $x>0$, which means that
$f_{2}(x)$ is increasing on $[0,\infty)$.

Since $f(0)=-48+15(1-K)<0$ and
\begin{equation*}
  f(\log(3/2))=\frac{171374697-215561285\cdot{}K}{1119744}>0\ ,
\end{equation*}
there exists a unique zero $0<a_{0}<\log(3/2)\approx{}0.405465<a_{c}$ of $f(x)$.
\end{proof}

\begin{proof}[{\bf Proof of Theorem \ref{thm:area_difference}}]
We shall prove the theorem in four steps.
Suppose that the catenoids in the family $\mathscr{S}=\{\CC_{a}\}_{a>0}$
have the same rotation axis $\gamma_{0}$
and the same symmetric plane $P_{0}$ (see Notion Convention \ref{notion_2}).

\vskip 0.25cm

\noindent\textbf{Step 1}. $\varphi(a)>0$ \emph{for} $0<a<a_{c}$. \emph{In particular,
we have} $\varphi(a_{c})\geq{}0$.

\begin{proof}[{\bf Proof of Step 1}]
Otherwise, if $\varphi(a)\leq{}0$ for any fixed $a\in(0,a_{c})$,
then we have $\Phi(a,r)<0$ for any $r>a$, that is,
\begin{equation}\label{eq:condition_least_area_annulus}
   \area(\Sigma_{a,r})<\area(\Delta_{r}^{+})+\area(\Delta_{r}^{-})\ ,
   \quad r>a\ ,
\end{equation}
where $\Sigma_{a,r}=\CC_{a}\cap\Nscr_{r}(\gamma_{0})$ and
$\Delta_{r}^{\pm}$ are the totally geodesic disks bounded by
the round circles $\partial\Sigma_{a,r}$.
We may choose $r\gg{}a$ such that
$\partial\Sigma_{a,r}\subset\T_{c}$, since $\partial_{\infty}\CC_{a}$
is contained in $\partial_{\infty}\T_{c}$ by Theorem \ref{thm:Gomes1987-prop3.2-a},
where $\T_{c}$ is defined by \eqref{eq:foliated_region}.
By Theorem \ref{thm:BSE}, $\CC_{a}$ is unstable since $a<a_{c}$.
According to the three dimensional version of the second statement of
Proposition 4.8 in \cite{BSE10}, $\Sigma_{a,r}$ is also unstable.

Let $\Omega\subset\B^{3}$ be the region
bounded by $\Delta_{r}^{+}$, $\Delta_{r}^{-}$ and $\Sigma_{a,r}$.
Since $\Omega$ is a simply connected region whose boundary
$\partial\Omega=\Delta_{r}^{+}\cup\Delta_{r}^{-}\cup\Sigma_{a,r}$
is mean convex with respect to the inward normal vector field, together
with the condition \eqref{eq:condition_least_area_annulus},
there exists an annulus-type area minimizing surface $\Sigma'\subset\Omega$ with
$\partial\Sigma'=\partial\Sigma_{a,r}$
according to \cite{AS79,MY1982(mz)}.

But the existence of $\Sigma'$ is impossible. The argument is as follows.
Since $\Sigma_{a,r}$ is unstable, $\Sigma'$ is not identical to $\Sigma_{a,r}$.
On the other hand, similar to
the arguments in \cite[$\S$4]{Wan19}, we know that $\Sigma'$ is a minimal
surface of revolution about $\gamma_{0}$, and it's also symmetric about the same
plane $P_0$, therefore it's a portion of
some catenoid $\CC_{a'}\in\mathscr{S}$, where $a'=\dist(\Sigma',\gamma_{0})$.
Since $\Sigma'\subset\Omega$, we have
\begin{equation*}
   a'=\dist(\Sigma',\gamma_{0})<\dist(\Sigma_{a,r},\gamma_{0})=a<a_{c}\ .
\end{equation*}
This implies that $\CC_{a'}$ is unstable. Recall that
$\partial\Sigma'=\partial\Sigma_{a,r}\subset\T_{c}$,
the compact minimal annulus $\Sigma'$ is unstable
according to the second statement of Proposition 4.8 in \cite{BSE10},
therefore it can \emph{not} be area minimizing either.

This is a contradiction to the above assumption, which implies
$\varphi(a)$ must be positive for all $0<a<a_{c}$.
In particular, this implies that $\varphi(a_{c})\geq{}0$.
\end{proof}

\noindent\textbf{Step 2}. $\varphi(a)<0$ \emph{if} $a\geq{}a_{l}>a_{c}$,
\emph{where} $a_{l}\approx{} 1.10055$
\emph{is defined by \eqref{eq:least_area_constant}}.
\emph{In particular}, $\varphi(a)\to{}-\infty$ \emph{as} $a\to\infty$.

\begin{proof}[{\bf Proof of Step 2}]
Using the substitution $t\mapsto{}t+a$,
we have the following estimate
according to the arguments in the proof of Lemma 4.1 in \cite{Wan19}
\begin{equation*}
\begin{aligned}
   \varphi(a)<4\pi{}K\cosh{}a-4\pi(\cosh{}a-1)
             =-4\pi(1-K)\cosh{}a+4\pi
\end{aligned}
\end{equation*}
for all $a\geq{}0$,
where $K<1$ is defined by \eqref{eq:constant_K}.
By Lemma 4.1 and (4.8) in \cite{Wan19}, we have $\varphi(a)<0$
if $a\geq{}a_{l}$. Furthermore, $\varphi(a)>-4\pi(\cosh{}a-1)$ for all $a>0$,
therefore we have $\varphi(a)\to{}-\infty$ as $a\to\infty$ by the squeeze theorem.
\end{proof}


\noindent\textbf{Step 3}. $\varphi(a)$ \emph{is concave downward if} $a>a_{0}$,
\emph{where} $a_{0}<a_{c}$ \emph{is the constant determined in Lemma \ref{lem:MVT}}.

\begin{proof}[{\bf Proof of Step 3}]
Direct computation shows that the second derivative of
$\varphi(a)$ can be written as $\varphi''(a)=I_{1}(a)+I_{2}(a)$, where
\begin{align*}
   I_{1}(a)=&\,4\pi\int_{0}^{\infty}\sinh(a+t)\cdot
                \left(\frac{\sinh(2a+2t)}{\sqrt{\sinh^2(2a+2t)-\sinh^2(2a)}}-1\right)dt \\
            &\,-4\pi{}K\cosh{}a
\end{align*}
and
\begin{align*}
   I_{2}(a)=&\,\scalebox{0.95}{$-4\pi$}\int_{0}^{\infty}
                \frac{\scalebox{0.9}{$5\cosh(a+t)-3\cosh(3a+3t)-3\cosh(5a+t)+\cosh(7a+3t)$}}
               {\scalebox{0.95}{$\sqrt{\sinh^2(2a+2t)-\sinh^2(2a)}\,\cdot\,\sinh^{2}(4a+2t)$}}dt\\
            &\,\scalebox{0.95}{$-4\pi{}(1-K)\cosh{}a$}\ .
\end{align*}
Similar to the argument in Step 2, we have $I_{1}(a)<0$ for any $a\geq{}0$.
Next we need show that $I_{2}(a)<0$ for $a>a_{0}$. We shall estimate both the numerator
and denominator of the integrand of $I_{2}(a)$:
For the numerator, we have 
\begin{equation*}
\begin{aligned}
   \text{numerator}
     &=\scalebox{0.95}{$5\cosh(a+t)-3\cosh(3a+3t)-3\cosh(5a+t)+\cosh(7a+3t)$}\\
     &\geq{}\scalebox{0.95}{$-3\cosh(3a)e^{3t}-3\cosh(5a)e^{t}+\sinh(7a)e^{3t}$}\ .
\end{aligned}
\end{equation*}
For the denominator, we have
\begin{align*}
   \text{denominator}
    &=\sqrt{\sinh^2(2a+2t)-\sinh^2(2a)}\cdot\sinh^{2}(4a+2t)\\
    &\leq\sinh(2a+2t)\cdot(\sinh(4a+2t))^{2}\\
    &\leq\cosh(2a)\cosh^{2}(4a)e^{6t}\ .
\end{align*}
So we have the following estimates
\begin{align*}
   I_{2}(a)
      \leq&\,-4\pi\int_{0}^{\infty}
             \frac{-3\cosh(3a)e^{-3t}
             -3\cosh(5a)e^{-5t}+\sinh(7a)e^{-3t}}{\cosh(2a)\cosh^{2}(4a)}dt\\
          &\,-4\pi{}(1-K)\cosh{}a\\
      =&\,-4\pi\left\{\frac{-\cosh(3a)-\frac{3}{5}\cosh(5a)+\frac{1}{3}\sinh(7a)}
          {\cosh(2a)\cosh^{2}(4a)}\right\}\\
       &\,-4\pi{}(1-K)\cosh{}a\\
      \leq&\,-\frac{\scalebox{0.9}{$4\pi$}}{\scalebox{0.9}{$30$}}\cdot
      \frac{\scalebox{0.9}{$-30\cosh(3a)-18\cosh(5a)+10\sinh(7a)+15(1-K)\cosh(8a)$}}
           {\scalebox{0.9}{$\cosh(2a)\cosh^{2}(4a)$}}\ ,
\end{align*}
where we use the facts that $\cosh(2a)\geq{}\cosh(a)\geq{}1$ and
$\cosh^{2}(4a)\geq\cosh(8a)/2$ for the last inequality.
According to Lemma \ref{lem:MVT}, $I_{2}(a)\leq{}0$ if $a>a_{0}$.

Therefore $\varphi''(a)<0$ as long as $a>a_{0}$.
In other words, $\varphi(a)$ is concave
downward on $(a_{0},\infty)$.
\end{proof}


\noindent\textbf{Step 4}. $\varphi(a_{c})>0$.

\begin{proof}[{\bf Proof of Step 4}]
By Lemma 3.3 in \cite{Wan19}, we know that
$0<a_{c}<A_{3}$, where the constant $A_{3}\approx{}0.530638$ is defined
by \cite[(6.1)]{Wan19}.
According to Step 3, $\varphi(a)$ is concave downward on $(a_{0},A_{3})\ni{}a_{c}$,
since $\varphi(a_{0})>0$ by Step 1 and $\varphi(A_{3})\approx{}0.781314>0$ by direct
numerical computation using softwares, we have $\varphi(a_{c})>0$.
\end{proof}

According to Steps 1--3 there exists a (unique) zero $a_{L}>a_{0}>0$ of
$\varphi(a)$ such that the conditions in Theorem \ref{thm:area_difference} are satisfied.
Moreover, we know that $a_{L}>a_{c}$ since $\varphi(a)>0$ if $a\in(0,a_{c}]$
according to Step 4.
\end{proof}


\subsection{Area minimizing annuli asymptotic to circles}
In order to prove the last statement of Theorem \ref{thm:main_theorem_I},
we have to determine whether a connected minimal surface $\Sigma\subset\H^3$
asymptotic to two disjoint round circles in $S_{\infty}^2$ is a spherical catenoid.
The following theorem of Levitt and Rosenberg shows that
$\Sigma$ is a spherical catenoid if it is regular at infinity
(see \cite[Theorem 3.2]{LR85} or \cite[Theorem 3]{dCGT86}).
Without the regularity at infinity, $\Sigma$ still could be a spherical catenoid
as long as it is (absolutely) area minimizing (see Corollary \ref{cor:LR}).

\begin{definition}
For an integer $k\geq{}1$, a complete minimal surface $\Sigma$ of
$\H^3$ is $C^{k}$-\emph{regular at infinity} if $\partial_\infty\Sigma$
is a $C^k$-submanifold of $S_{\infty}^2$ and
$\overline\Sigma=\Sigma\cup\partial_\infty\Sigma$ is a
$C^k$-submanifold (with boundary) of $\overline{\H^3}$.
\end{definition}

\begin{theorem}[Levitt and Rosenberg]\label{thm:LR85}
Let $\CC$ be a connected minimal surface immersed in $\H^3$ whose asymptotic
boundary consists of two disjoint round circles $C_{1}$ and $C_{2}$
in $S_{\infty}^{2}$. If $\CC$ is $C^2$-regular at infinity, then $\CC$ is a
spherical catenoid asymptotic to $C_{1}\cup{}C_{2}$.
\end{theorem}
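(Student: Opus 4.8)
The plan is to prove the stronger assertion that $\CC$ is invariant under the full group of rotations about the common axis of $C_{1}$ and $C_{2}$, from which being a spherical catenoid follows at once. First I would normalize: since any two disjoint round circles are coaxial, I place their common rotation axis $\gamma_{0}$ along the $t$-axis of the upper half-space model \eqref{eq:upper_half_space}, so that $C_{1}$ and $C_{2}$ become the Euclidean circles $\{|z|=r_{1}\}$ and $\{|z|=r_{2}\}$ in $\widehat{\C}$. In these coordinates the configuration $C_{1}\cup C_{2}$ is invariant under every rotation $z\mapsto e^{i\phi}z$ about $\gamma_{0}$ and under every reflection $z\mapsto e^{i\phi}\bar z$ across a totally geodesic plane containing $\gamma_{0}$. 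Since the reflections across planes through $\gamma_{0}$ generate, via pairwise composition, the whole rotation group $\SO(2)$ fixing $\gamma_{0}$, it suffices to show that $\CC$ is symmetric under reflection across \emph{every} totally geodesic plane containing $\gamma_{0}$. I would also record that, by the convex-hull property, $\CC\subset\CH(C_{1}\cup C_{2})$, so $\CC$ is confined to the slab $\{\,|y|\le r_{2}\,\}$, and to every rotate of this slab, which is what lets a sweeping plane first reach $\CC$ from infinity.

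The symmetry is obtained by Alexandrov reflection (the method of moving planes), as in \cite{LR85,dCGT86}. Fixing the target plane $P=\{y=0\}$, I slide the family of vertical totally geodesic planes $P_{\lambda}=\{y=\lambda\}$ downward from $\lambda=+\infty$; each reflection $R_{\lambda}\colon y\mapsto 2\lambda-y$ is an isometry of $\H^{3}$. For $\lambda>r_{2}$ the plane $P_{\lambda}$ misses $\overline{\CC}$, and as $\lambda$ decreases the reflected cap $R_{\lambda}(\CC\cap\{y>\lambda\})$ stays on the near side without meeting $\CC$. Decreasing $\lambda$ to the first critical value and applying the interior tangency principle for minimal surfaces, together with the boundary maximum principle where contact occurs along the ideal boundary, one concludes that the process cannot stop before $\lambda=0$; by the symmetry of the boundary circles it stops exactly there, giving $R_{0}(\CC)=\CC$. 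Rotating the whole picture by an arbitrary angle $\phi$, an isometry preserving $C_{1}\cup C_{2}$, and repeating yields reflection symmetry across every plane through $\gamma_{0}$, hence invariance of $\CC$ under all rotations about $\gamma_{0}$.

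Once $\CC$ is rotationally invariant, it is by definition a surface of revolution about $\gamma_{0}$, i.e.\ a catenoid: its intersection with the half-plane $\B_{+}^{2}$ is a single generating curve whose minimality forces it to satisfy the catenary equation \eqref{eq:differential equations of Pi}. Because $\CC$ is connected with $\partial_{\infty}\CC=C_{1}\cup C_{2}$, this generating curve joins the two circles, so $\CC$ is precisely the spherical catenoid asymptotic to $C_{1}\cup C_{2}$, namely one of the $\CC_{a}$ up to isometry, as claimed.

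The main obstacle is carrying the moving-plane argument all the way to the asymptotic boundary $S_{\infty}^{2}$: one must guarantee that the sweeping planes can be started from infinity, that a first point of tangency, which may occur on $\partial_{\infty}\CC=C_{1}\cup C_{2}$ rather than in the interior, is a genuine contact to which a maximum principle applies, and that $\CC$ approaches the round circles cleanly enough for the reflected cap to behave like a graph near infinity. This is exactly the role of the hypothesis that $\CC$ be $C^{2}$-\emph{regular at infinity}: it makes $\overline{\CC}=\CC\cup\partial_{\infty}\CC$ a $C^{2}$ manifold-with-boundary up to $S_{\infty}^{2}$, so that a boundary Hopf lemma can be invoked there. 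A secondary point requiring care is embeddedness, since Alexandrov reflection is cleanest for embedded surfaces; I would either argue that regularity at infinity together with the two-disjoint-circles hypothesis forces $\CC$ to be embedded near infinity, or replace the global comparison by the local tangency principle applied to $\CC$ and $R_{\lambda}(\CC)$ as immersed minimal surfaces.
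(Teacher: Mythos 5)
The paper does not actually prove this statement: Theorem \ref{thm:LR85} is imported verbatim from the literature, cited as \cite[Theorem 3.2]{LR85} and \cite[Theorem 3]{dCGT86}, so there is no in-paper proof to compare against. Your Alexandrov moving-planes outline is, in fact, the method used in those references: normalize so the common axis of $C_1$ and $C_2$ is the $t$-axis, use the convex hull property to start the sweep of totally geodesic planes from infinity, show no first contact can occur before the plane of symmetry of the boundary data, and conclude reflection symmetry in every plane through the axis, hence rotational invariance and the catenary ODE. So as a reconstruction of the cited argument, your route is the right one, and your reduction from ``symmetric under all reflections through the axis'' to ``invariant under $\SO(2)$'' is correct.

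That said, as a standalone proof the write-up defers exactly the two points where the real work lies, and they are not minor. First, the immersed-versus-embedded issue: Alexandrov reflection compares ``the reflected cap'' with ``the rest of the surface,'' which presupposes that $\CC$ separates a neighborhood and that sidedness is well defined; for a merely immersed surface this breaks down, and your proposed fallback (``apply the local tangency principle to $\CC$ and $R_{\lambda}(\CC)$ as immersed surfaces'') does not by itself restore the monotone sweep, because without embeddedness one cannot even define the first value of $\lambda$ at which the cap ceases to lie on one side. The cited proofs handle this by first using $C^2$-regularity at infinity to show $\overline{\CC}$ is an embedded graph over $(C_1\cup C_2)\times[0,\epsilon)$ near $S_{\infty}^{2}$ and then propagating. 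Second, the contact-at-infinity case: when the first tangency of $R_{\lambda}(\overline{\CC})$ with $\overline{\CC}$ occurs at a point of $C_1\cup C_2$, one needs that both surfaces meet $S_{\infty}^{2}$ orthogonally there (a consequence of regularity at infinity for minimal surfaces) and a boundary-point Hopf lemma in a compactified chart; you correctly identify this as the role of the $C^2$ hypothesis but do not carry it out. Since these are precisely the lemmas that constitute \cite{LR85}, your proposal should be read as a correct high-level plan with the two decisive steps left as acknowledged gaps rather than as a complete proof.
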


The following corollary is a direct application of Theorem \ref{thm:LR85} and
boundary regularity, which will be applied to prove the last statement
of Theorem \ref{thm:main_theorem_I}.

\begin{corollary}\label{cor:LR}
Let $\CC$ be a connected minimal surface immersed in $\H^3$ whose asymptotic
boundary consists of two disjoint round circles $C_{1}$ and $C_{2}$ in $S_{\infty}^{2}$.
If $\CC$ is an {\rm(}absolutely{\rm)} area minimizing surface,
then $\CC$ is a spherical catenoid asymptotic to $C_{1}\cup{}C_{2}$.
\end{corollary}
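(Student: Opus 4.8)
The plan is to deduce this directly from Theorem \ref{thm:LR85}, once we upgrade the hypothesis from \emph{area minimizing} to \emph{$C^2$-regular at infinity}. Since Theorem \ref{thm:LR85} already classifies every connected minimal surface asymptotic to two disjoint round circles that is $C^2$-regular at infinity as a spherical catenoid, the entire content of the corollary is the promotion of area minimization to boundary regularity at infinity; once that regularity is in hand there is nothing left to prove.

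First I would observe that the two prescribed curves $C_1$ and $C_2$ are \emph{round} circles, hence real-analytic (in particular $C^\infty$) submanifolds of $S_\infty^2$. This places us squarely within the hypotheses of the boundary regularity theory for area minimizing surfaces in $\H^3$. Next I would invoke the regularity result of Hardt and Lin \cite{HL87}: an area minimizing surface asymptotic to a $C^{1,\alpha}$ Jordan curve is $C^{1,\alpha}$ at infinity. Because the boundary data here are smooth, I would then apply the higher boundary regularity established by Lin in \cite{Lin89,Lin12} to bootstrap the regularity of $\overline{\CC}=\CC\cup\partial_\infty\CC$ up to (at least) $C^2$ at infinity. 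Finally, with $C^2$-regularity at infinity available, Theorem \ref{thm:LR85} applies verbatim and forces $\CC$ to be a spherical catenoid asymptotic to $C_1\cup C_2$.

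The main obstacle, and the point requiring the most care, is that the regularity theorems of \cite{HL87,Lin89,Lin12} are naturally stated for \emph{homotopically} area minimizing surfaces, whereas the corollary allows $\CC$ to be \emph{absolutely} (that is, homologically) area minimizing; one must verify that the key estimate underlying these results---in particular Theorem 2.2 of \cite{HL87}---is insensitive to this distinction. This is precisely the issue clarified in the Acknowledgement, so I would cite that clarification to transport the $C^{1,\alpha}$, and then the $C^2$, boundary regularity to the homological setting. A secondary, purely bookkeeping point is to confirm that the regularity produced is genuinely $C^2$ rather than merely $C^{1,\alpha}$, since Theorem \ref{thm:LR85} demands $C^2$-regularity at infinity; this follows from the smoothness of the round-circle boundary data together with the higher-order results of \cite{Lin89,Lin12}.
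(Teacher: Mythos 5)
Your proposal is correct and follows essentially the same route as the paper's proof: smoothness of round circles, the boundary regularity results of \cite{HL87,Lin89,Lin12} (including the clarification that Theorem 2.2 of \cite{HL87} applies to both homotopically and homologically area minimizing surfaces) to obtain $C^2$-regularity at infinity, and then Theorem \ref{thm:LR85}. No substantive differences.
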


\begin{proof}
Since round circles in $S_{\infty}^{2}$ are always smooth, the (absolutely)
area minimizing surface $\CC$ is $C^{2}$-regular at infinity by the results
of boundary regularity in \cite{HL87,Lin89,Lin12}
\footnote{The key result of Theorem 2.2 in \cite{HL87} works for
both homotopically and homologically area minimizing surfaces in $\U^3$.}.
Therefore $\CC$ must be a minimal surface of revolution by the theorem of
Levitt and Rosenberg, that is, $\Sigma$ is a spherical catenoid, whose asymptotic
boundary is $C_{1}\cup{}C_{2}$.
\end{proof}


\subsection{Existence of area minimizing catenoids}\label{subsec:main_theorem_I}
Now we are able to prove the the first main result of this paper.

\begin{maintheorem}
There exists a constant $a_{L}\approx{}0.847486$ given by
Theorem \ref{thm:area_difference}.
\begin{enumerate}
  \item If $a_{c}\leq{}a<a_{L}$, then each spherical catenoid $\CC_{a}$ is globally
        stable but not {\rm(}absolutely{\rm)} area minimizing.
  \item If $a\geq{}a_{L}$, then each spherical catenoid $\CC_{a}$ is
        {\rm(}absolutely{\rm)} area minimizing among all surfaces asymptotic to
        $\partial_{\infty}\CC_{a}$.
\end{enumerate}

Moreover, any {\rm(}absolutely{\rm)} area minimizing surface asymptotic to
two disjoint round circles in $S_{\infty}^{2}$ must be one element in
$\{\CC_{a}\}_{a\geq{}a_{L}}$ up to isometry.
\end{maintheorem}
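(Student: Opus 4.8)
The plan is to deduce the two numbered claims directly from the sign of the area difference $\varphi$ of Theorem~\ref{thm:area_difference}, and to obtain the final uniqueness statement by feeding those claims into Corollary~\ref{cor:LR}. I work with the normalized family $\mathscr{S}=\{\CC_{a}\}_{a>0}$ of Notion Convention~\ref{notion_2}; for $r\geq a$ I abbreviate $\Sigma_{a,r}=\CC_{a}\cap\Nscr_{r}(\gamma_{0})$, with totally geodesic caps $\Delta_{r}^{\pm}$ bounding its two boundary circles $C^{\pm}$, so that $\Phi(a,r)=\area(\Sigma_{a,r})-\area(\Delta_{r}^{+})-\area(\Delta_{r}^{-})$ increases strictly to $\varphi(a)$ as $r\to\infty$. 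For statement~(1), global stability is exactly Theorem~\ref{thm:BSE} since $a\geq a_{c}$; and since $\varphi(a)>0$ for $a<a_{L}$ by Theorem~\ref{thm:area_difference}, the monotonicity of $\Phi(a,\cdot)$ yields a finite $r$ with $\area(\Sigma_{a,r})>\area(\Delta_{r}^{+})+\area(\Delta_{r}^{-})$, which violates condition~(1) of Definition~\ref{def:least_area_annulus}; hence $\CC_{a}$ is not area minimizing, let alone absolutely so.

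For statement~(2) fix $a\geq a_{L}$, so $\varphi(a)\leq 0$ and therefore $\Phi(a,r)<0$ for every finite $r$, i.e.\ $\area(\Sigma_{a,r})<\area(\Delta_{r}^{+})+\area(\Delta_{r}^{-})$. I would prove each compact piece $\Sigma_{a,r}$ is absolutely area minimizing and then exhaust $\CC_{a}$ to conclude in the sense of Definition~\ref{def:absolutely_area_minimizing}. First, geometric measure theory produces an absolute minimizer $S$ with $\partial S=C^{+}\cup C^{-}$; interior regularity makes it a smooth embedded minimal surface, and minimality of area forces genus zero, so $S$ is either a disjoint pair of disks or a connected annulus. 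The disjoint pair is excluded: each totally geodesic cap $\Delta_{r}^{\pm}$ is itself absolutely area minimizing for the round circle $C^{\pm}$, so any admissible pair of disks has area at least $\area(\Delta_{r}^{+})+\area(\Delta_{r}^{-})>\area(\Sigma_{a,r})$. Thus $S$ is a connected annulus.

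To finish statement~(2) I would confine $S$ and compare areas by calibration. Because $\partial S$ lies strictly outside the minimal leaf $\CC_{a_{c}}=\partial\T_{c}$ and the leaves $\{\CC_{b}\}_{b\geq a_{c}}$ foliate $\T_{c}$ by minimal surfaces (Notion Convention~\ref{notion_2} and \eqref{eq:foliated_region}), the maximum principle confines $S$ to $\overline{\T_{c}}$. On $\T_{c}$ the unit normal $N$ to the foliation gives the $2$-form $\omega=\iota_{N}\mathrm{vol}$, which is closed since $d\omega=(\div N)\,\mathrm{vol}=0$ for a minimal foliation and has comass one while restricting to the area form on every leaf; thus $\omega$ calibrates $\Sigma_{a,r}$. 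As $S$ and $\Sigma_{a,r}$ are homologous rel boundary in $\T_{c}$, Stokes' theorem gives $\area(\Sigma_{a,r})=\int_{\Sigma_{a,r}}\omega=\int_{S}\omega\leq\area(S)$, so $\Sigma_{a,r}$ attains the absolute minimum. For the concluding assertion, let $\Sigma$ be any absolutely area minimizing surface asymptotic to two disjoint round circles; Corollary~\ref{cor:LR} makes it some $\CC_{a}\in\mathscr{S}$ up to isometry, and absolute minimality forces $\Phi(a,r)\leq 0$ for all $r$, hence $\varphi(a)\leq 0$, which by Theorem~\ref{thm:area_difference} happens only when $a\geq a_{L}$.

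The crux, and the main obstacle, is the passage from homotopic to absolute (homological) minimization in statement~(2): the calibration built from the minimal foliation only controls competitors that lie in $\T_{c}$ and are homologous to the leaf, so the two delicate points are the maximum-principle confinement of an arbitrary absolute minimizer to $\overline{\T_{c}}$ (note that the region inside $\CC_{a_{c}}$ is \emph{not} foliated by our catenoids, so this step is genuinely needed) and the exclusion of the disconnected two-disk competitor. It is precisely the latter that the sharp threshold $\varphi(a_{L})=0$ governs, which is why the constant $a_{L}$ of Theorem~\ref{thm:area_difference} is both sufficient and, by the concluding assertion, necessary.
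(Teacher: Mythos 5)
Your part (1) and your concluding ``moreover'' paragraph are essentially sound and close in spirit to the paper's argument: the paper likewise derives non-minimization from $\Phi(a,r)>0$, although it goes further and exhibits an explicit connected competitor $\Pi_{a,r}(s)$ (a thin equidistant cylinder joined to the two punctured geodesic disks) that is homotopic to $\Sigma_{a,r}$ rel boundary and has smaller area, so that condition (2) of Definition \ref{def:least_area_annulus} fails as well, not only the Douglas-type condition (1). Your use of Corollary \ref{cor:LR} for the final uniqueness statement matches the paper.

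The genuine gap is in part (2), at the confinement step of your calibration argument. You assert that the maximum principle forces the absolute minimizer $S$ into $\overline{\T_{c}}$ because $\partial S$ lies in the foliated region and the leaves $\{\CC_{b}\}_{b\geq a_{c}}$ are minimal. But a foliation of $\T_{c}$ alone cannot prevent $S$ from crossing the boundary leaf $\CC_{a_{c}}$ into the complementary solid cylinder $\B^{3}\setminus\T_{c}$ (the interior region of $\CC_{a_{c}}$ in the sense of Definition \ref{def:interior_and_exterior_regions}): the excursion of $S$ into that cylinder has its boundary on $\CC_{a_{c}}$, and there is no leaf inside the cylinder for $S$ to touch from one side. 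In fact the totally geodesic caps $\Delta_{r}^{\pm}$ are minimal surfaces whose boundaries lie strictly inside $\T_{c}$ (on $\CC_{a}$ with $a>a_{c}$) and which pass straight through that cylinder, so ``boundary in the foliated region plus maximum principle'' is false as a confinement principle for minimal competitors. Without confinement, the form $\omega=\iota_{N}\mathrm{vol}$ is not defined along all of $S$ and the Stokes comparison collapses. (Your claim that the absolute minimizer has genus zero is also asserted without proof, though it would be superfluous if the calibration went through.) The paper avoids this entirely: it first shows each $\Sigma_{a,r}$ is a homotopically area minimizing annulus by a symmetrization argument --- any competing minimizing annulus inside the mean-convex region bounded by $\Sigma_{a,r}\cup\Delta_{r}^{+}\cup\Delta_{r}^{-}$ is a surface of revolution, hence a portion of some $\CC_{a'}$, which forces $a'<a_{c}$ and contradicts instability via Proposition 4.8 of B\'erard--Sa Earp --- and then upgrades to absolute minimization at the level of the complete surface using boundary regularity at infinity together with the Levitt--Rosenberg theorem (Corollary \ref{cor:LR}), rather than by calibrating compact pieces. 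To repair your route you would need either that symmetrization step or an independent argument that the absolute minimizer does not enter the interior region of $\CC_{a_{c}}$.
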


\begin{proof}
Suppose that all the spherical catenoids in the family
$\mathscr{S}=\{\CC_{a}\}_{a>0}$ have
the same axis $\gamma_{0}$ and the same symmetric plane $P_0$
(see Notion Convention \ref{notion_2}).

(1) If $a_{c}\leq{}a<a_{L}$, then $\varphi(a)>0$ by Theorem \ref{thm:area_difference},
so there exists some $r\gg{}a$ such that $\Phi(a,r)>0$, this
is equivalent to the following inequality
\begin{equation}
   \area(\Sigma_{a,r})>\area(\Delta_{r}^{+})+\area(\Delta_{r}^{-})
   =4\pi(\cosh{}r-1)\ ,
\end{equation}
where $\Sigma_{a,r}$ is defined by \eqref{eq:Sigma_a_r}, and
$\Delta_{r}^{\pm}$ are the totally geodesic disks bounded by
the two boundary components of $\Sigma_{a,r}$ respectively.

Next we shall construct annuli $\Pi_{a,r}(s)$ such that
$\partial\Pi_{a,r}(s)=\partial\Sigma_{a,r}$ for all $0<s\ll{}r$, and
the area of $\Pi_{a,r}(s)$ is less than that of $\Sigma_{a,r}$
when $s>0$ is sufficiently small.
Therefore $\CC_{a}$ is not an  area minimizing surface if
$a_{c}\leq{}a<a_{L}$, and it is not absolutely area minimizing either.

The distance between $\Delta_{r}^{+}$ and $\Delta_{r}^{-}$
is given by (see \cite[(3.11)]{Wan19})
\begin{equation}\label{eq:distance_disks}
   L=\dist(\Delta_{r}^{+},\Delta_{r}^{-})=
   2\int_{a}^{r}\frac{\sinh(2a)}{\cosh{}t}
   \frac{dt}{\sqrt{\sinh^2(2t)-\sinh^2(2a)}}\ .
\end{equation}
Let $0<s\ll{}r$ be sufficiently small, and let $Y_{a,r}(s)$ be the annulus-type
region of $\partial\Nscr_{s}(\gamma_{0})$ between $\Delta_{r}^{+}$
and $\Delta_{r}^{-}$, where $\Nscr_{s}(\gamma_{0})$ is the $s$-neighborhood
of $\gamma_{0}$, then $Y_{a,r}(s)$ is an equidistant cylinder with radius
$s$ and height $L$, so its area is easy to obtain
\begin{equation}
   \area(Y_{a,r}(s))=2\pi{}L\sinh{}s\cosh{}s\ .
\end{equation}
Let $D_{r}^{\pm}(s)=\Delta_{r}^{\pm}\cap\Nscr_{s}(\gamma_{0})$, then
$D_{r}^{\pm}(s)$ are totally geodesic disks with radii $s$.
We can define a new annulus $\Pi_{a,r}(s)$ as follows:
\begin{equation}\label{new_annulus_type_surface}
   \Pi_{a,r}(s)=Y_{a,r}(s)\cup(\overline{\Delta_{r}^{+}\setminus{}D_{r}^{+}(s)})
   \cup(\overline{\Delta_{r}^{-}\setminus{}D_{r}^{-}(s)})\ .
\end{equation}
Obviously $\partial\Pi_{a,r}(s)=\partial\Sigma_{a,r}$ and
$\Pi_{a,r}(s)$ is homotopic to $\Sigma_{a,r}$
rel $\partial\Sigma_{a,r}$.
Recall that $\area(D_{r}^{+}(s))+\area(D_{r}^{-}(s))=4\pi(\cosh{}s-1)$, so
the area of $\Pi_{a,r}(s)$ is
\begin{equation}
   \area(\Pi_{a,r}(s))=2\pi{}L\sinh{}s\cosh{}s+
   4\pi(\cosh{}r-1)-4\pi(\cosh{}s-1)\ .
\end{equation}
Since $2\pi{}L\sinh{}s\cosh{}s-4\pi(\cosh{}s-1)\to{}0$ as $s\to{}0$,
we can make $s$ sufficiently small so that
$2\pi{}L\sinh{}s\cosh{}s-4\pi(\cosh{}s-1)<\area(\Sigma_{a,r})-4\pi(\cosh{}r-1)$,
and then $\area(\Pi_{a,r}(s))<\area(\Sigma_{a,r})$.

Therefore, if $a_{c}\leq{}a<a_{L}$, then each stable catenoid $\CC_{a}$ is not
(absolutely) area minimizing.

(2) On the other hand, $\varphi(a)\leq{}0$ for $a\geq{}a_{L}$
according to Theorem \ref{thm:area_difference}, hence for any $r\geq{}a$,
we have $\Phi(a,r)<0$, which implies the following inequality
\begin{equation*}
   \area(\Sigma_{a,r})<\area(\Delta_{r}^{+})+\area(\Delta_{r}^{-})\ ,
   \quad\text{for all}\ r>a\ .
\end{equation*}
Recall that $\Sigma_{a,r}\subset\CC_{a}\subset\T_{c}$ for all $r\geq{}a$,
since $a\geq{}a_{L}>a_{c}$,
where $\T_c$ is defined by \eqref{eq:foliated_region}.
We claim that $\Sigma_{a,r}$ is a compact area minimizing annulus for all $r>a$.
Otherwise, similar to the arguments in the first step of the proof of
Theorem \ref{thm:area_difference}, there exists a compact area minimizing annulus
$\Sigma'$ such that $\partial\Sigma'=\partial\Sigma_{a,r}$ and it's a portion
of some spherical catenoid $\CC_{a'}\in\mathscr{S}$,
where $a'$ is the distance from $\CC_{a'}$ to its
rotation axis. Since the elements in $\{\CC_{a}\}_{a\geq{}a_{c}}$ are disjoint
to each other, we have $a'<a_{c}$. Similar to the arguments in the proof of
Theorem \ref{thm:area_difference}, $\Sigma'$ can't be area minimizing.
This is a contradiction.
So $\Sigma_{a,r}$ is area minimizing for all $r>a$, which implies that
$\CC_{a}$ is an area minimizing surface as long as $a\geq{}a_{L}$.

Next we will show that $\CC_{a}$ is actually an absolutely area
minimizing surface if $a\geq{}a_{L}$.
Let $\Sigma$ be an absolutely area minimizing surface which is asymptotic to
$C_{1}\cup{}C_{2}=:\partial_{\infty}\CC_{a}$, where $a\geq{}a_{L}$.
According to Corollary \ref{cor:LR},
$\Sigma$ must be a spherical catenoid asymptotic to $C_{1}\cup{}C_{2}$.
Recall that there are exactly two spherical catenoids asymptotic to
$C_{1}\cup{}C_{2}$ (see \cite{BSE10} or \cite{Wan19}),
one is $\CC_{a}$ whereas the other one is unstable.
Since $\Sigma$ is also area minimizing, $\Sigma$ must be identical to $\CC_a$,
where $a\geq{}a_{L}$.

For the last statement, let $\Pi\subset\H^{3}$ be an (absolutely) area
minimizing surface asymptotic to two round circles $C_1$ and $C_2$
in $S_{\infty}^{2}$. 
Then $\Pi$ must be a spherical catenoid by Corollary \ref{cor:LR}.
Let $a$ be the distance from $\Pi$ to its rotation axis, then $a\geq{}a_{L}$ by the
above arguments.
\end{proof}

\section{Existence of complete area minimizing annuli in $\H^3$}\label{sec:main_II}

In this section we shall prove Theorem \ref{thm:main_theorem_II}
(see $\S$\ref{subsec:proof_of_main_theorem_II}). At first, we shall prove
three important results: Proposition \ref{prop:single_disk},
Theorem \ref{thm:finite_density_at_infinity} and
Proposition \ref{lem:catenoid_intersection}.
To finish the proof of Theorem \ref{thm:main_theorem_II}, we also need the
help of geometric measure theory, in particular the theory of varifolds,
see \cite{Fed69,All75,Sim83(book),LY02,KP08,Mor16} for details.

Let $\Lambda$ be a set in $S_{\infty}^{2}$, the \emph{convex hull} of $\Lambda$,
which is denoted by $\CH(\Lambda)$, is the intersection of all the closed half spaces
in $\H^3$ whose asymptotic boundary 
contains $\Lambda$.
Suppose that $\Gamma_{1}$ and $\Gamma_{2}$ are two \emph{disjoint}
(star-shaped) Jordan curves in $S_{\infty}^{2}$.
Obviously $\partial_{\infty}\CH(\Gamma_{1}\cup\Gamma_{2})=\Gamma_{1}\cup\Gamma_{2}$.
The boundary of $\CH(\Gamma_{1}\cup\Gamma_{2})$ consists of three
surfaces in $\B^{3}$:
\begin{itemize}
  \item $\Dscr_{1}$ and $\Dscr_{2}$ are disk-type surfaces asymptotic
        to $\Gamma_{1}$ and $\Gamma_{2}$ respectively,
  \item $\Ascr$ is an annulus-type surface asymptotic
        to $\Gamma_{1}\cup\Gamma_{2}$.
\end{itemize}
Moreover $\partial\CH(\Gamma_{1}\cup\Gamma_{2})=\Dscr_{1}\cup\Dscr_{2}\cup\Ascr$
is mean convex with respect to the inward normal vector field.
For any minimal surface $\Sigma_{i}$ asymptotic to $\Gamma_{i}$, $i=1,2$,
and any minimal surface $\Pi$ asymptotic to $\Gamma_{1}\cup\Gamma_{2}$, it's
well known that $\Sigma_{1},\ \Sigma_{2},\ \Pi\subset\CH(\Gamma_{1}\cup\Gamma_{2})$
according to \cite[Theorem 19.2]{Sim83(book)}.


\subsection{Minimal surfaces asymptotic to star-shaped curves}
\label{subsec:star_shaped_curve}

Let $\Gamma\subset{}S_{\infty}^{2}$ be a star-shaped Jordan curve.
According to \cite[Theorem 4.1]{And83} and
\cite[Theorem 4.1]{HL87} (see also \cite[Example on pp.14--15]{Has86}),
there exists a \emph{unique} complete embedded disk-type minimal
surface $\Sigma\subset\B^3$ asymptotic to $\Gamma$, which minimizes area
in the category of immersed surfaces (no topological restriction)
asymptotic to $\Gamma$. In other words, $\Sigma$ is actually an absolutely
area minimizing surface asymptotic to $\Gamma$.

\begin{definition}\label{def:minimal_surfaces_star_shaped_boundary}
Let $\Gamma\subset{}S_{\infty}^{2}$ be a star-shaped Jordan curve
with an axis $\ell$ and let $\Sigma\subset\B^3$ be the minimal disk
asymptotic to $\Gamma$.
Let $\phi:\B^{3}\to\U^{3}$ be the isometry
that maps the axis $\ell$ of $\Gamma$ to the $t$-axis of $\U^3$
(see \eqref{eq:upper_half_space}).
For any positive real number $\lambda$, we define
\begin{equation}\label{eq:hyperbolic_translation}
   h_{\lambda}=\phi^{-1}\circ{}m_{\lambda}\circ\phi\ ,
\end{equation}
where $m_{\lambda}(z,t)=(\lambda{}z,\lambda{}t)$ for any $(z,t)\in\U^3$.
Then each $h_{\lambda}$ is an isometry of $\B^3$ that
translates a point in the geodesic $\ell$ at distance $\log\lambda$ along $\ell$.

For any $\lambda>0$, the surface $\Sigma_{\lambda}=h_{\lambda}(\Sigma)$
is an area minimizing disk asymptotic to the Jordan
curve $\Gamma_{\lambda}=h_{\lambda}(\Gamma)$. In particular $\Sigma_{\lambda}=\Sigma$
when ${\lambda}=1$.
\end{definition}

By \cite[Theorem 4.1]{HL87} and \cite[Corollary 2.4]{Lin89}
(see also \cite[Example on pp.14--15]{Has86}),
we have the following corollary
(still using the above notations and settings in
Definition \ref{def:minimal_surfaces_star_shaped_boundary}).

\begin{proposition}\label{prop:star_shaped_Jordan_curve}
The area minimizing disk $\Sigma\subset\B^3$ asymptotic to a star-shaped Jordan curve
$\Gamma\subset{}S_{\infty}^{2}$ is a Killing graph
{\rm(}see Definition 10.4.1 in \cite{Lop13}{\rm)}.
Moreover the family of complete area minimizing disks
$\{\Sigma_{\lambda}\}_{\lambda>0}$ foliates $\B^3$.
\end{proposition}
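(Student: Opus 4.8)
The plan is to pass to the upper half-space model via the isometry $\phi$ of Definition \ref{def:minimal_surfaces_star_shaped_boundary}, so that $\ell$ becomes the $t$-axis, each $h_{\lambda}$ becomes the Euclidean dilation $(z,t)\mapsto(\lambda z,\lambda t)$, and the Killing field $\X$ generating the group $\{h_{\lambda}\}_{\lambda>0}$ is the radial dilation field, whose orbits are the Euclidean rays from the origin (the $t$-axis itself being the single orbit fixed setwise). The graphical conclusion is then essentially the content of the quoted references: since $\Gamma$ is star-shaped, $\phi(\Omega_{+})$ is a radial graph $\rho=R(\alpha)$ over the circle of directions, and the existence and uniqueness theory of \cite{HL87,Lin89} for this star-shaped data realizes $\Sigma$ as a graph transverse to the orbits of $\X$, i.e.\ a Killing graph in the sense of \cite{Lop13}. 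So the real work is to show that the flow-translates $\{\Sigma_{\lambda}=h_{\lambda}(\Sigma)\}_{\lambda>0}$ foliate $\B^{3}$.

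First I would record the boundary picture, which is where star-shapedness enters decisively. Under $\phi$ the curve $\Gamma_{\lambda}=h_{\lambda}(\Gamma)$ is the radial graph $\rho=\lambda R(\alpha)$; hence for $\lambda_{1}\neq\lambda_{2}$ the curves $\Gamma_{\lambda_{1}},\Gamma_{\lambda_{2}}$ are disjoint and $\bigcup_{\lambda>0}\Gamma_{\lambda}=\widehat\C\setminus\{0,\infty\}$. Next I would establish disjointness of the leaves by a sliding argument on $\lambda$, holding $\Sigma=\Sigma_{1}$ fixed. Each $\Sigma_{\lambda}$ is a complete minimal disk confined to $\CH(\Gamma_{\lambda})=h_{\lambda}(\CH(\Gamma))$ by \cite[Theorem 19.2]{Sim83(book)}; since $\Gamma_{\lambda}$ collapses toward the ideal point $0$ as $\lambda\to0$ and toward $\infty$ as $\lambda\to\infty$, while $\overline{\Sigma_{1}}$ avoids neighborhoods of $0$ and of $\infty$, convex-hull confinement gives $\Sigma_{\lambda}\cap\Sigma_{1}=\emptyset$ for all sufficiently small and all sufficiently large $\lambda$. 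Decreasing (respectively increasing) $\lambda$ toward $1$ to a hypothetical first-contact value, the two leaves are tangent with one lying locally on a single side of the other; the interior maximum principle for minimal surfaces then forces $\Sigma_{\lambda}=\Sigma_{1}$ as sets, whence $\Gamma_{\lambda}=\partial_{\infty}\Sigma_{\lambda}=\partial_{\infty}\Sigma_{1}=\Gamma$, contradicting $\lambda\neq1$. Applying $h_{\mu}$ promotes this to $\Sigma_{\lambda_{1}}\cap\Sigma_{\lambda_{2}}=\emptyset$ whenever $\lambda_{1}\neq\lambda_{2}$.

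Finally I would assemble the foliation. Disjointness already shows each orbit of $\X$ meets $\Sigma$ at most once, for if $h_{\mu}(P),h_{\mu'}(P)\in\Sigma$ with $\mu\neq\mu'$ then $h_{\mu}(P)\in\Sigma_{1}\cap\Sigma_{\mu/\mu'}$, which is impossible. For the covering statement (each orbit meets $\Sigma$ at least once) I would show that $U=\bigcup_{\lambda>0}\Sigma_{\lambda}$ is open and closed in $\U^{3}$: openness is the flow-box property of the smooth map $(p,\lambda)\mapsto h_{\lambda}(p)$ together with transversality of $\X$ to the leaves, while closedness follows because on compact $\lambda$-intervals the leaves remain in a compact region and converge to a leaf (by uniqueness and continuity of $h_{\lambda}$), and the extremes $\lambda\to0,\infty$ fill full neighborhoods of the two ends. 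Connectedness of $\U^{3}$ then gives $U=\U^{3}$, and transporting through $\phi^{-1}$ yields the foliation of $\B^{3}$; in particular each $\X$-orbit meets $\Sigma$ exactly once, re-confirming the Killing-graph description.

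I expect the main obstacle to be the maximum-principle step across the ideal boundary: because the $\Sigma_{\lambda}$ are noncompact and only \emph{asymptotic} to the $\Gamma_{\lambda}$, one must rule out contact occurring at infinity rather than at an interior point. This is exactly what the boundary regularity at infinity of \cite{HL87,Lin89} supplies, forcing each $\Sigma_{\lambda}$ to approach $\Gamma_{\lambda}$ in $C^{1}$; since the radial graphs $\Gamma_{\lambda_{1}},\Gamma_{\lambda_{2}}$ are separated, the leaves are separated near $\partial_{\infty}\U^{3}$, so any first contact is driven into the interior where the classical maximum principle applies.
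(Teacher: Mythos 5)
The paper offers no argument for this proposition at all: it is stated as an immediate consequence of \cite[Theorem 4.1]{HL87}, \cite[Corollary 2.4]{Lin89} and the example in \cite{Has86}. Your sliding construction of the foliation therefore supplies more than the paper does, and its architecture --- disjointness of $\Sigma_{1}$ and $\Sigma_{\lambda}$ for extreme $\lambda$ via convex-hull confinement, exclusion of a first interior contact by the strong maximum principle, and an open-and-closed argument for $\bigcup_{\lambda}\Sigma_{\lambda}$ --- is the standard and essentially correct way to prove such foliation statements (it is, in effect, a proof of what the paper merely cites).

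Two steps need repair. First, you rule out contact at the ideal boundary by invoking the $C^{1}$ regularity at infinity of \cite{HL87,Lin89}; those results require $\Gamma$ to be $C^{1,\alpha}$, whereas the proposition assumes only a star-shaped Jordan curve, so this tool is not available here. A more elementary fix works: once $\Gamma_{\lambda}\cap\Gamma_{1}=\emptyset$, the set $\overline{\CH(\Gamma_{\lambda})}\cap\overline{\CH(\Gamma_{1})}$ is a compact subset of $\U^{3}$, because its closure in $\overline{\U^{3}}$ meets $\widehat\C$ only in $\partial_{\infty}\CH(\Gamma_{\lambda})\cap\partial_{\infty}\CH(\Gamma_{1})=\Gamma_{\lambda}\cap\Gamma_{1}=\emptyset$; since both surfaces lie in their convex hulls by \cite[Theorem 19.2]{Sim83(book)}, all intersection or near-contact points are trapped in a fixed compact set and cannot escape to infinity. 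Second, your assertion that the dilates $\Gamma_{\lambda_{1}},\Gamma_{\lambda_{2}}$ are pairwise disjoint presumes $\Gamma$ is a \emph{strict} radial graph $\rho=R(\alpha)$. The paper's Definition \ref{def:star_shaped} only requires the two complementary domains to be star-shaped, which permits $\Gamma$ to contain segments of rays through the origin (a ``step''-shaped curve, for instance); for such $\Gamma$ the dilates with nearby $\lambda$ overlap, and both your boundary-separation step and the compactness fix above fail. You should either exclude this degenerate case explicitly (it is excluded in the radial-graph hypotheses of the cited uniqueness theorems) or handle it separately; note also that your openness step quietly uses the transversality of the Killing field to the leaves, i.e.\ the Killing-graph conclusion itself, so the logical order should be: graph property from the cited references first, foliation second.
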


Next we try to understand the intersection of a minimal surface
asymptotic to a star-shaped Jordan with a $3$-ball. We expect
that this intersection just consists of exactly one component
when the radius of the ball is sufficiently large.
More precisely we have the following proposition.

\begin{proposition}\label{prop:single_disk}
Let $\Sigma\subset\H^3$ be a minimal surface asymptotic to
a star-shaped Jordan curve $\Gamma\subset{}S_{\infty}^{2}$.
Let $B^{3}(p,r)$ be any $3$-ball in $\U^3$ with the center $p$ and the
radius $r$, where $p\in\H^3$ is an arbitrary point.
If $r$ is sufficiently large, then $B^{3}(p,r)\cap\Sigma$ consists of exactly
one disk, whose boundary is simple closed curve.
Moreover, $B^{3}(p,r)\cap\Sigma$ converges to $\Gamma$ as $r\to\infty$.
\end{proposition}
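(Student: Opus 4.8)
The plan is to exploit the foliation of $\B^3$ by the area minimizing disks $\{\Sigma_\lambda\}_{\lambda>0}$ from Proposition \ref{prop:star_shaped_Jordan_curve} together with the Killing graph structure of $\Sigma=\Sigma_1$. Working in the upper half-space model $\U^3$ via the isometry $\phi$ of Definition \ref{def:minimal_surfaces_star_shaped_boundary}, the axis $\ell$ becomes the $t$-axis, and the maps $h_\lambda$ act as the Euclidean dilations $m_\lambda(z,t)=(\lambda z,\lambda t)$. The key structural fact is that $\Sigma$ is a graph over the exhausting family of geodesics (or leaves) emanating from $\ell$: because $\Gamma$ is star-shaped with center at the origin on one side and at infinity on the other, each ray from the origin in $\widehat{\C}$ meets $\Gamma$ exactly once, and this radial monotonicity lifts to a monotone graph structure for $\Sigma$ over the Killing flow. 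First I would make this precise: parametrize $\Sigma$ as a graph so that its intersection with each leaf of the foliation (each surface $\Sigma_\lambda$, or equivalently each dilated copy of a fixed totally geodesic half-plane through $\ell$) is a single arc.

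Second, I would reduce the ball-intersection statement to a statement about large geodesic neighborhoods of a point on the axis. Since $\Sigma\subset\CH(\Gamma)$ and $\CH(\Gamma)$ has asymptotic boundary exactly $\Gamma$, the surface stays a bounded hyperbolic distance from $\ell$ outside a compact set; more usefully, by the dilation-equivariance $h_\lambda(\Sigma)=\Sigma_\lambda$ I can transport the ball $B^3(p,r)$ so that, after applying a suitable $h_\lambda$, the problem for a large ball centered at an arbitrary $p$ becomes the problem for a fixed-size region centered on the axis — this uses that $\{h_\lambda\}$ act transitively along $\ell$ and that dilations in $\U^3$ rescale balls. The claim that $B^3(p,r)\cap\Sigma$ is a single disk then follows from the graph structure: a geodesic ball is convex, $\Sigma$ is a Killing graph, and a convex region meets a monotone graph in one connected piece, with the boundary of that piece being the single closed curve where the sphere $\partial B^3(p,r)$ crosses the graph transversally.

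Third, for the convergence $B^3(p,r)\cap\Sigma\to\Gamma$ as $r\to\infty$, I would invoke that $\overline\Sigma=\Sigma\cup\Gamma$ is a $C^0$ (indeed better, by the boundary regularity of \cite{HL87,Lin89,Lin12}, a $C^{1,\alpha}$) submanifold-with-boundary of $\overline{\B^3}$ together with the fact that $\partial_\infty\Sigma=\Gamma$. As $r\to\infty$ the ball $B^3(p,r)$ exhausts $\H^3$, so $B^3(p,r)\cap\Sigma$ exhausts $\Sigma$, and the asymptotic boundary of this exhaustion is $\partial_\infty\Sigma=\Gamma$; convergence is understood as convergence of the boundary curves $\partial(B^3(p,r)\cap\Sigma)$ to $\Gamma$ in $S_\infty^2$.

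The main obstacle will be the second step — establishing genuinely that the intersection is \emph{connected} and that the sphere meets $\Sigma$ transversally for all large $r$, rather than merely that $\Sigma$ is a graph. The difficulty is that a large geodesic ball centered at an arbitrary interior point $p$ need not be centered on the axis, and a priori the sphere $\partial B^3(p,r)$ could graze $\Sigma$ tangentially or cut off a spurious small component near where $\Sigma$ approaches $\Gamma$. The dilation-equivariance trick reduces the arbitrary center to a controlled configuration, but I still need a maximum-principle or monotonicity argument — comparing $\Sigma$ against the foliating leaves $\Sigma_\lambda$ — to rule out tangential contact and extra components uniformly in $r$. I expect this transversality-and-connectedness argument, carried out via the foliation $\{\Sigma_\lambda\}$ and the convexity of geodesic balls, to be the technical heart of the proof.
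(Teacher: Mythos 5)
Your proposal does not close the step you yourself identify as the heart of the matter, and that step is where the paper's proof actually lives. The assertion that ``a convex region meets a monotone graph in one connected piece'' is not justified in the generality you need: the Killing-graph property only says that each orbit of the flow $\{h_\lambda\}$ meets $\Sigma$ at most once, which controls intersections with individual orbits but not the topology of $\Sigma\cap B^{3}(p,r)$. A graph over a flow can enter and leave a geodesically convex ball several times and produce several components; ruling this out requires information about how $\Sigma$ behaves near its asymptotic boundary, which your argument never invokes. The dilation-equivariance reduction is also not available as stated: $h_\lambda$ is an isometry, so it preserves the radius $r$ and sends $\Sigma$ to the \emph{different} leaf $\Sigma_\lambda$; you cannot convert ``a large ball centered at an arbitrary $p$'' into ``a fixed-size region centered on the axis'' for the \emph{same} surface. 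Finally, transversality of $\partial B^{3}(p,r)$ with $\Sigma$ for \emph{all} sufficiently large $r$ (rather than almost every $r$) is asserted but not proved.

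The paper's mechanism is different and supplies exactly the missing input. By the boundary regularity results of \cite{HL87,Lin89CPAM}, there is a constant $\rho_{\Gamma}>0$ such that, in the upper half-space model, $(\Sigma\cup\Gamma)\cap\{t<\rho_{\Gamma}\}$ is a graph over $\Gamma\times[0,\rho_{\Gamma})$; since $\Gamma$ is a Jordan curve this collar is a single annulus, and every horizontal slice $P(t)\cap\Sigma$ for $0<t<\rho_{\Gamma}$ is one simple closed curve converging to $\Gamma$ as $t\to0$ (this is Lemma \ref{lem:Jordan_curve_single}). For $r$ large, the hyperbolic sphere $\partial B^{3}(p,r)$ is uniformly close to a horosphere, so the portion of $\Sigma$ outside $B^{3}(p,r)$ is contained in this annular collar; hence $B^{3}(p,r)\cap\Sigma$ is $\Sigma$ with an annular neighborhood of its end removed, i.e.\ a single disk bounded by a single Jordan curve, and that curve converges to $\Gamma$ as $r\to\infty$. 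If you wish to salvage your foliation-based outline, you would still need this collar structure at infinity (or an equivalent description of the end of $\Sigma$) to control connectedness and transversality near the boundary; the foliation $\{\Sigma_\lambda\}$ and convexity of geodesic balls do not do it on their own.
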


Before we prove Proposition \ref{prop:single_disk}, we need prove the following
lemma.

\begin{lemma}\label{lem:Jordan_curve_single}
Let $\Sigma\subset\U^3$ be a minimal surface asymptotic to
a star-shaped Jordan curve $\Gamma\subset\widehat{\C}$.
Let $P(t)$ be the horizontal plane through the point $(0,0,t)$ for
$t>0$. There exists a positive number $\rho_{\Gamma}$ such that
$P(t)\cap\Sigma$ consists of exactly one simple closed curve
for all $t\in[0,\rho_{\Gamma})$.
Moreover $P(t)\cap\Sigma$ converges to $\Gamma$ as $t\to{}0$.
\end{lemma}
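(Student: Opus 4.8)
The plan is to exploit the star-shaped structure of $\Gamma$ together with the foliation of $\B^3$ (equivalently $\U^3$) by the area minimizing disks $\{\Sigma_\lambda\}_{\lambda>0}$ from Proposition \ref{prop:star_shaped_Jordan_curve}. First I would normalize via the isometry $\phi$ so that the axis $\ell$ of $\Gamma$ is the $t$-axis of $\U^3$, with $\phi(p_+)=0$ and $\phi(p_-)=\infty$; in this model the hyperbolic translations $h_\lambda(z,t)=(\lambda z,\lambda t)$ are precisely the Euclidean dilations centered at the origin. By Definition \ref{def:star_shaped}, one component $\phi(\Omega_+)$ is star-shaped with center the origin and the other $\phi(\Omega_-)$ is star-shaped with center at infinity, and $\Gamma$ separates them. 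The key topological input is that each Euclidean ray from the origin in $\widehat{\C}$ crosses $\Gamma$ in exactly one point: such a ray enters $\phi(\Omega_+)$ near $0$ (so its initial segment lies in $\phi(\Omega_+)$ by star-shapedness) and lies in $\phi(\Omega_-)$ far out (by the star-shapedness of $\phi(\Omega_-)$ with center $\infty$), so it meets $\Gamma$, and a standard star-shaped argument forces the crossing to be unique.

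Next I would transfer this planar transversality statement to a statement about the horizontal planes $P(t)=\{(z,t)\}$. The idea is that $\Sigma$ lies in the convex hull $\CH(\Gamma)$, and as $t\to 0$ the surface $\Sigma$ is $C^0$-close (in fact asymptotically tangent, using the boundary regularity of \cite{HL87,Lin89}) to the cone over $\Gamma$ from the vertical axis; equivalently, $\Sigma$ is a Killing graph over a domain in the half-plane via Proposition \ref{prop:star_shaped_Jordan_curve}. Writing $\Sigma$ as a graph $t=u(z)$ (or a graph in the Killing direction) over the star-shaped domain $\phi(\Omega_+)$ with $u\to 0$ on $\Gamma$, the level set $P(t)\cap\Sigma$ becomes the super-level set boundary $\{z : u(z)=t\}$. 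The uniqueness of the ray-intersection with $\Gamma$ near the boundary, combined with the transversality of $\Sigma$ to the vertical foliation coming from the maximum principle, should guarantee that for $t$ below a threshold $\rho_\Gamma$ this level set is a single simple closed curve; its convergence to $\Gamma$ as $t\to 0$ follows from the graphical description together with $\overline{\Sigma}=\Sigma\cup\Gamma$ being a $C^0$-submanifold of $\overline{\H^3}$.

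I would then make the choice of $\rho_\Gamma$ precise by a transversality and compactness argument. Because $\Sigma$ is smooth and asymptotic to $\Gamma$, and because the horizontal planes $P(t)$ foliate $\U^3$, the function $t|_\Sigma$ has critical points only in a compact part of $\Sigma$ staying at hyperbolic distance bounded away from $\partial_\infty$; choosing $\rho_\Gamma$ smaller than the infimum of the $t$-heights of all such critical points makes every $P(t)$ with $t\in[0,\rho_\Gamma)$ transverse to $\Sigma$, so each $P(t)\cap\Sigma$ is a smooth $1$-manifold. It then remains to rule out multiple components, which is where the star-shaped hypothesis is essential: a second closed component would, upon letting $t\to 0$, produce a second boundary component in $\Gamma$ or contradict the unique-ray-crossing property.

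\textbf{Main obstacle.} The hard part will be making rigorous the passage from the boundary behavior near $\Gamma$ (where I only have $C^1$-type asymptotic control from \cite{HL87,Lin89}) to a uniform transversality and single-component statement on the whole range $[0,\rho_\Gamma)$. In particular, one must control the topology of $P(t)\cap\Sigma$ uniformly as $t\to 0$, ensuring no new components bubble off near the asymptotic boundary; the star-shapedness of \emph{both} complementary domains $\Omega_\pm$ is exactly what is needed to exclude this, but converting that qualitative picture into a clean proof that $t|_\Sigma$ has no critical points below $\rho_\Gamma$ and that each level set stays connected is the delicate step.
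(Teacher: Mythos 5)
There is a genuine gap, and it sits exactly where you flag your ``main obstacle.'' The paper's proof is essentially a one--line appeal to the boundary structure theorem of \cite{HL87} (see also \cite{Lin89CPAM}): for an area minimizing surface asymptotic to $\Gamma$ there is a $\rho_\Gamma>0$ such that $(\Sigma\cup\Gamma)\cap\{t<\rho_\Gamma\}$ is a graph over $\Gamma\times[0,\rho_\Gamma)$, hence (since $\Gamma$ is a Jordan curve) a half--open annulus; each horizontal slice $P(t)\cap\Sigma$ with $t<\rho_\Gamma$ is then a single simple closed curve, and the slices converge to $\Gamma$ as $t\to 0$. That structure theorem is precisely the statement that $t|_\Sigma$ has no critical points below $\rho_\Gamma$ and that no extra components can bubble off near the asymptotic boundary --- i.e.\ it is the content you acknowledge you cannot derive. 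Your proposal cites the boundary regularity only for ``asymptotic tangency'' and then tries to recover the slice structure from star-shapedness plus the foliation $\{\Sigma_\lambda\}$, but that route does not close the gap: the foliation by dilates controls intersections with the leaves $\Sigma_\lambda$, not with the horizontal planes $P(t)$, which are not leaves of any foliation by minimal surfaces adapted to the problem.

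A second, more concrete flaw: you write $\Sigma$ as a Euclidean vertical graph $t=u(z)$ over $\phi(\Omega_+)$ and identify $P(t)\cap\Sigma$ with the level set $\{u=t\}$. Proposition \ref{prop:star_shaped_Jordan_curve} only gives that $\Sigma$ is a \emph{Killing} graph with respect to the field generated by the dilations $h_\lambda$, whose orbits are Euclidean rays from the origin in $\U^3$; this does not imply that $\Sigma$ is a graph in the $t$-direction over a planar domain, and in general it is not. Relatedly, your claim that each ray from the origin meets $\Gamma$ in exactly one point is not forced by Definition \ref{def:star_shaped}: the ray meets $\Gamma$ in a connected set which may be a nondegenerate radial segment. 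Neither of these is fatal to the overall strategy of the paper, but they mean your sketch, as written, does not produce the single-curve statement; the correct move is to quote the graph-over-$\Gamma\times[0,\rho_\Gamma)$ form of the Hardt--Lin boundary regularity directly.
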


\begin{proof}According to \cite{HL87} or \cite{Lin89CPAM}, there exists
a constant $\rho_{\Gamma}$ depending on $\Gamma$ such that
\begin{equation}\label{eq:noncompact_annulus}
   \Sigma':=(\Sigma\cup\Gamma)\cap\{(x,y,t)\in\overline{\U^3}\ |\ t<\rho_{\Gamma}\}
\end{equation}
is a finite union of surfaces with boundary which can be viewed as a graph
over $\Gamma\times[0,\rho_{\Gamma})$.
We assume that $\Gamma$ is a star shaped Jordan curve,
therefore $\Gamma\times[0,\rho_{\Gamma})$
is an annulus, so is $\Sigma'$. This means that
$P(t)\cap\Sigma$ consists of exactly one simple closed curve if
$t<\rho_\Gamma$. As $t\to{}0$, $P(t)\cap\Sigma$ converges to
$\Gamma$.
\end{proof}

\begin{proof}[\bf{Proof of Proposition \ref{prop:single_disk}}]
Consider the upper half space model $\U^3$.
When $r$ is sufficiently large (which might depend on the choice of
$p$), $B^{3}(p,r)$ is sufficiently close to a horizontal horosphere in $\U^3$.
In particular, when $r$ is sufficiently large, we have
$(\Sigma\cup\Gamma)\cap\left(\U^{3}\setminus\widebar{B^{3}(p,r)}\right)\subset\Sigma'$,
where $\Sigma'$ is given by \eqref{eq:noncompact_annulus}.
Applying Lemma \ref{lem:Jordan_curve_single}, we prove the statement of the
proposition.
\end{proof}

\subsection{Density at infinity}
Let $\Sigma\subset\H^{3}$ be a minimal surface asymptotic to a Jordan curve $
\Gamma$ in $S_{\infty}^{2}$. Fix a point $p\in\H^{3}$, for any $r>0$, let
\begin{equation}\label{eq:density}
   \Theta(\Sigma,p,r)=\frac{\area(\Sigma\cap{}B^{3}(p,r))}{4\pi\sinh^{2}(r/2)}
     =\frac{\area(\Sigma\cap{}B^{3}(p,r))}{2\pi(\cosh{}r-1)}\ ,
\end{equation}
where $B^{3}(p,r)\subset\H^3$ is an open three ball with (hyperbolic) radius
$r$ centered at $p$.
According the hyperbolic version of monotonicity formula
(see \cite[Theorem 1]{And82}), $\Theta(\Sigma,p,r)$ is a nondecreasing
function of $r>0$, so the limit of $\Theta(\Sigma,p,r)$ exists
as $r\to\infty$. Note that $B^{3}(p,r)\subset{}B^{3}(q,r+\dist(p,q))$
for any point $q\in\H^{3}$, from which it easily follows that
$\lim\limits_{r\to\infty}\Theta(\Sigma,p,r)$ is independent of the choice of $p$, and
therefore we call
\begin{equation}\label{eq:infinite_density}
   \Theta_{\infty}(\Sigma)=\lim_{r\to\infty}\Theta(\Sigma,p,r)
\end{equation}
the \emph{density of $\Sigma$ at infinity} (see \cite{Whi2016}).

The following result belongs to Gromov \cite[Theorem 8.3.A]{Gro83}
(see also \cite{EWW2002}), which is crucial to prove
Theorem \ref{thm:main_theorem_II}.

\begin{theorem}[Gromov]\label{thm:finite_density_at_infinity}
If $\Sigma\subset\H^3$ is a minimal surface asymptotic to
a rectifiable star-shaped Jordan curve $\Gamma\subset{}S_{\infty}^{2}$,
then $\Theta_{\infty}(\Sigma)$ is finite.
\end{theorem}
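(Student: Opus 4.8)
The plan is to prove that $\Theta_\infty(\Sigma)$ is finite by bounding $\area(\Sigma\cap B^3(p,r))$ linearly in the ambient area-growth scale $2\pi(\cosh r - 1)$, and the key tool is the foliation structure from Proposition \ref{prop:single_disk} together with the coarea formula. Working in the upper half space model $\U^3$, with $\ell$ the axis of $\Gamma$ mapped to the $t$-axis, I would exploit that for $t<\rho_\Gamma$ the surface $\Sigma$ meets each horizontal plane $P(t)$ in a single simple closed curve and behaves like a graph over $\Gamma\times[0,\rho_\Gamma)$. The intuition is that near infinity $\Sigma$ looks like the trace of $\Gamma$ under the hyperbolic geometry, and since $\Gamma$ is rectifiable its length is finite, so the ``boundary contribution'' to the area at infinity is controlled. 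The finiteness of $\Theta_\infty$ then reduces to showing that the area contained outside a large ball grows no faster than the volume-growth rate $2\pi(\cosh r-1)$ of the ambient balls.

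First I would reduce to the region near infinity: by the monotonicity formula, $\Theta(\Sigma,p,r)$ is nondecreasing, so it suffices to bound the area of $\Sigma$ in the complement of a fixed large compact set, since $\Sigma\cap B^3(p,r_0)$ has finite area for any fixed $r_0$. Next I would slice $\Sigma$ by the horospheres (or horizontal planes $P(t)$) near the asymptotic boundary. By Lemma \ref{lem:Jordan_curve_single}, for $t$ small the slice $\Sigma\cap P(t)$ is a single curve, and I would estimate its hyperbolic length: since $\Sigma'$ is a graph over $\Gamma\times[0,\rho_\Gamma)$ and $\Gamma$ is rectifiable, the Euclidean length of each slice is comparable to $\length(\Gamma)$, while the hyperbolic metric $ds^2=(|dz|^2+dt^2)/t^2$ introduces a factor $1/t$. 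Integrating the area element via the coarea formula across the slices, the area of the portion of $\Sigma$ with $t\in(\varepsilon,\rho_\Gamma)$ is controlled by $\int_\varepsilon^{\rho_\Gamma}(\length(\Gamma)/t^2)\,dt$, which behaves like $\length(\Gamma)/\varepsilon$ as $\varepsilon\to 0$. The main point is to compare this growth rate, as the slicing parameter tends to the boundary, against the ambient denominator $2\pi(\cosh r-1)$, which grows like $\tfrac12\pi e^{r}$; translating the Euclidean-coordinate cutoff $t=\varepsilon$ into a hyperbolic-radius cutoff $r$ (roughly $\varepsilon\sim e^{-r}$ up to the choice of basepoint) shows both quantities grow at the same exponential rate, so the quotient stays bounded.

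The hard part will be making the graph estimate quantitative and uniform: one must control the \emph{tilt} of $\Sigma$ relative to the horospheres so that the area element does not blow up faster than the slice length, i.e.\ one needs the gradient of the graphing function to stay bounded as $t\to 0$. This is exactly where the boundary-regularity results of \cite{HL87,Lin89,Lin89CPAM} and the star-shapedness of $\Gamma$ enter, guaranteeing that $\overline\Sigma$ meets $S_\infty^2$ transversally with bounded slope, so the area two-form is comparable to $(\text{arc length of slice})\times dt/t^2$ with a constant depending only on $\Gamma$. I would also need to handle the possibility that the slope degenerates near non-smooth points of $\Gamma$; here rectifiability of $\Gamma$ still bounds the total slice length even where the curve is merely Lipschitz, so the integral estimate survives. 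Once the comparison constant is in hand, combining the fixed interior bound with the near-infinity estimate yields $\limsup_{r\to\infty}\Theta(\Sigma,p,r)<\infty$, which is precisely the finiteness of $\Theta_\infty(\Sigma)$.
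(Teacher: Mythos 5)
Your slicing strategy has a genuine gap at exactly the place you flag as ``the hard part'': the uniform tilt bound. To convert ``each horospherical slice $\Sigma\cap P(t)$ has hyperbolic length $\O(\length(\Gamma)/t)$'' into ``the area of the slab $\{\varepsilon<t<\rho_{\Gamma}\}$ is $\O(\length(\Gamma)/\varepsilon)$'' via the coarea formula, you must bound the coarea factor $1/|\nabla^{\Sigma}t|$ from above, i.e.\ keep the angle between $\Sigma$ and the horizontal planes uniformly away from $\pi/2$ as $t\to 0$. The boundary-regularity results you invoke (\cite{HL87,Lin89,Lin89CPAM}) provide such control only when $\Gamma$ is at least $C^{1}$ (in fact $C^{1,\alpha}$); the theorem assumes only that $\Gamma$ is rectifiable and star-shaped, so $\Gamma$ may fail to have a tangent on a dense set, and near such points $\overline{\Sigma}$ need not be a graph over $\Gamma\times[0,\rho_{\Gamma})$ with bounded gradient. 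Your proposed repair --- that rectifiability still bounds the total slice length --- controls the wrong quantity: it bounds $\length(\Sigma\cap P(t))$ but not the Jacobian factor $1/|\nabla^{\Sigma}t|$, and it is the latter that blows up wherever $\Sigma$ becomes nearly tangent to the horospheres. As written, the near-infinity area estimate is therefore not established.

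The paper sidesteps this entirely with a comparison argument that needs no regularity of $\Sigma$ at infinity. It takes the geodesic cone $\mathscr{C}$ over $\Gamma$ with vertex $p\in\Sigma$, which has finite density precisely because $\Gamma$ is rectifiable, notes that $\Sigma\subset\CH(\Gamma)$, and compares $\Sigma(r)=\Sigma\cap B^{3}(p,r)$ with the competitor $\mathscr{C}(r)\cup E(r)$, where $E(r)$ lies in the annulus $A(r)=\partial B^{3}(p,r)\cap\CH(\Gamma)$. Since the convex hull of a Jordan curve is exponentially thin at infinity, $\area(A(r))=\O(e^{-(r-r_{0})}\sinh r)$ is negligible against $\sinh^{2}(r/2)$, and the minimizing property of $\Sigma(r)$ yields $\Theta_{\infty}(\Sigma)\leq\Theta_{\infty}(\mathscr{C})<\infty$. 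If you want to keep your slicing route you would either have to restrict to $C^{1,\alpha}$ curves or replace the graph/tilt estimate by a comparison of this kind. Note also that both arguments really use that $\Sigma$ is the (absolutely) area-minimizing disk asymptotic to $\Gamma$, not merely a minimal surface, so that hypothesis should be made explicit in your write-up as well.
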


\begin{proof}
Let $\CH(\Gamma)\subset\H^{3}$ be the convex hull of $\Gamma$,
then $\Sigma$ is contained in $\CH(\Gamma)$.
Choose a point $p$ in $\Sigma$, and consider the geodesic cone $\mathscr{C}$
over $\Gamma$ with vertex $p$. Then $\mathscr{C}$ is also contained in $\CH(\Gamma)$.
Since $\Gamma$ is rectifiable, $\Theta(\mathscr{C},p,r)$ is a finite constant
for any $r>0$, so is $\Theta_{\infty}(\mathscr{C})$.

Let $B^{3}(p,r)\subset\H^{3}$ be the $3$-ball with the radius $r$ and the center $p$.
According to Proposition \ref{prop:single_disk},
there exists a constant $r_{0}>0$ such that
$\Sigma(r):=B^{3}(p,r)\cap\Sigma$ consists of exactly one disktype component
and $A(r):=\partial{}B^{3}(p,r)\cap\CH(\Gamma)$ is an annulus for all
$r>r_{0}$. Also set $\mathscr{C}(r):=B^{3}(p,r)\cap\mathscr{C}$.

Obviously both $\partial\Sigma(r)=\partial{}B^{3}(p,r)\cap\Sigma(r)$ and
$\partial\mathscr{C}(r)=\partial{}B^{3}(p,r)\cap\mathscr{C}$
are contained in $A(r)$ for all $r>r_{0}$. Let $E(r)\subset{}A(r)$ be the
domain bounded by $\partial\Sigma(r)$ and
$\partial\mathscr{C}(r)$.
Then $\Sigma(r)$ and $\mathscr{C}(r)\cup{}E(r)$ are the surfaces in $\H^3$
with the same boundary. Since $\Sigma(r)$ is an absolutely area minimizing
surface for any $r>r_0$, we must have
\begin{equation*}
   \area(\Sigma(r))<\area(\mathscr{C}(r)\cup{}E(r))<\area(\mathscr{C}(r))+
   \area(A(r))
\end{equation*}
for all $r>r_{0}$.
The area of $A(r)$ can be estimated as the product of
an exponentially small factor and the length of $\partial\mathscr{C}(r)$,
where the former term is obtained from the argument that is similar
to \cite[p.40]{Thu80} by the definition of convex hull
(see also \cite[p.111]{Gro83}). More precisely, we have
\begin{equation*}
   \area(A(r))=\O\big(e^{-(r-r_{0})}\length(\partial\mathscr{C}(r))\big)
              =\O\big(e^{-(r-r_{0})}\sinh{}r\big)
\end{equation*}
for all $r>r_0$, where the second equality comes from the fact that
$\Theta(\mathscr{C},p,r)$ is a finite constant for any $r>0$, which can
imply that $\length(\partial\mathscr{C}(r))/(2\pi\sinh{}r)$ is the
same constant for all $r>0$.

Therefore we have the following estimates
\begin{align*}
   \Theta(\Sigma,p,r)
      &=\frac{\area(\Sigma\cap{}B^{3}(p,r))}{4\pi\sinh^{2}(r/2)}\\
      &<\frac{\area(\mathscr{C}\cap{}B^{3}(p,r))}{{4\pi\sinh^{2}(r/2)}}+
        \frac{\area(A(r))}{{4\pi\sinh^{2}(r/2)}}\\
      &=\Theta_{\infty}(\mathscr{C})+
        \O\left(e^{-(r-r_{0})}\frac{\sinh{}r}{\sinh^{2}(r/2)}\right)\\
      &=\Theta_{\infty}(\mathscr{C})+
        \O\left(e^{-(r-r_{0})}\frac{\sinh{}r}{\cosh{}r-1}\right)
\end{align*}
for all $r>r_0$. As $r\to\infty$, we have
$\Theta_{\infty}(\Sigma)\leq\Theta_{\infty}(\mathscr{C})<\infty$.
\end{proof}



\subsection{Intersection of minimal surfaces}
In this subsection, we study the intersection of a spherical catenoid with a minimal disk
asymptotic to a star-shaped Jordan curve. At first let's fix some notations in the
following definition.


\begin{definition}\label{def:interior_and_exterior_regions}
A catenoid $\CC$ with $\partial_{\infty}\CC=C_{1}\cup{}C_{2}$
divides the hyperbolic space $\B^3$ into two regions, the one
containing the rotation axis of $\CC$ is homeomorphic to a solid cylinder,
and the other one is homeomorphic to a solid torus. The former is called
the \emph{interior region} of $\CC$, denoted by $\X$; the latter
is called the \emph{exterior region} of $\CC$, denoted by $\T$.
One can verify that $\partial\T=\CC=\partial\X$,
$\partial_{\infty}\X=B_{1}\cup{}B_{2}$ and
$\partial_{\infty}\T=S_{\infty}\setminus(B_{1}\cup{}B_{2})$,
where $B_{i}$ is the disk-type component of
$S_{\infty}^{2}\setminus(C_{1}\cup{}C_{2})$ bounded by $C_{i}$
for $i=1,2$. Note that both $\T$ and $\X$ are the subregions
of $\B^3$ with mean convex boundary.
\end{definition}

\begin{proposition}\label{lem:catenoid_intersection}
Let $\CC\subset\B^{3}$ be an area minimizing catenoid asymptotic to
$C_{1}\cup{}C_{2}$, whose interior region is denoted by $\X$.
Let $\Gamma\subset{}S^{2}_{\infty}$
be a star-shaped Jordan curve with the axis $\ell$
which separates $C_{1}$ and $C_{2}$ {\rm(}that is, two components of
$S_{\infty}^{2}\setminus\Gamma$ contain $C_{1}$ and $C_{2}$ respetively{\rm)}.
Suppose that $\Sigma\subset\B^3$ is the area minimizing disk asymptotic to
$\Gamma$, then $\Sigma$ must intersect $\CC$ transversely,
and $\Sigma\cap\X$ consists of a single disk-type subdomain
of $\Sigma$, denoted by $\Delta$, such that
\begin{enumerate}
  \item $\partial\Delta$ is a Jordan curve which is essential in $\CC$, and
  \item $(\Sigma\setminus\overline{\Delta})\cap(\X\cup\CC)=\emptyset$.
\end{enumerate}
\end{proposition}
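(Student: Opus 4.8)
The plan is to establish the three claims—transversality, a single interior component, and the essentiality/exterior conditions—by combining the foliation structure of the star-shaped family with a topological analysis at infinity. First I would set up coordinates using the isometry from Definition \ref{def:minimal_surfaces_star_shaped_boundary}: since $\Gamma$ is star-shaped with axis $\ell$, the family $\{\Sigma_{\lambda}\}_{\lambda>0}$ foliates $\B^3$ by Proposition \ref{prop:star_shaped_Jordan_curve}. The key observation is that the catenoid $\CC$ and each leaf $\Sigma_{\lambda}$ are both minimal surfaces, so I can invoke the maximum principle for minimal surfaces: if two minimal surfaces are tangent at an interior point, they must coincide locally (and hence globally, by unique continuation). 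Since $\CC$ is a catenoid and $\Sigma_{\lambda}$ is a disk asymptotic to a curve $\Gamma_{\lambda}$ that separates $C_1$ and $C_2$, they cannot coincide, so every intersection is transverse. This settles transversality.

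Next I would analyze the intersection at infinity to control the number of components. Because $\Gamma$ separates $C_1$ and $C_2$ on $S_{\infty}^{2}$, the asymptotic boundary $\Gamma = \partial_\infty \Sigma$ meets each of the circles' complementary disks $B_1, B_2$ (with $\partial_\infty\X = B_1 \cup B_2$). The plan is to show $\Sigma \cap \X$ has exactly one component by a continuity/foliation argument: I would consider the function $\lambda \mapsto \Sigma_\lambda \cap \X$ and use the fact that as $\lambda$ ranges over $(0,\infty)$ the leaves sweep out all of $\B^3$. For a single leaf $\Sigma = \Sigma_1$, I would argue that $\partial_\infty\Sigma = \Gamma$ crosses $\partial_\infty\X$ in a controlled way—since $\Gamma$ is a single Jordan curve separating $C_1$ and $C_2$, its intersection with the interior region's ideal boundary forces $\Sigma \cap \X$ to be connected. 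Here I would combine Proposition \ref{prop:single_disk} (which gives that large balls meet $\Sigma$ in a single disk) with the separation hypothesis to rule out extra components: any additional component of $\Sigma \cap \X$ would have boundary on $\CC$ that is either inessential (bounding a disk in $\CC$) or would force $\Sigma$ to re-enter $\X$, contradicting either the maximum principle or the disk-type structure of $\Sigma$.

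For the essentiality in part (1) and the exterior-disjointness in part (2), I would argue as follows. The boundary curve $\partial\Delta = \Sigma \cap \CC$ lies on the catenoid; if it were inessential in $\CC$ (bounding a disk $D \subset \CC$), then $\Delta \cup D$ would be a closed surface, and since both $\Sigma$ and $\CC$ are area minimizing I would derive a contradiction—either by a cut-and-paste area comparison showing one could strictly decrease area, or by noting that a minimal surface cannot bound a region on one side inside a mean-convex domain. So $\partial\Delta$ must be essential, separating $C_1$ from $C_2$ on $\CC$. For part (2), having shown $\Sigma \cap \X$ is the single disk $\Delta$, the complement $\Sigma \setminus \overline{\Delta}$ lies outside $\X$; I would verify it does not re-enter by the foliation property and the fact that $\Delta$ already accounts for the full crossing of $\Sigma$ through the interior region, so $(\Sigma \setminus \overline{\Delta})$ cannot meet $\X$, and it meets $\CC = \partial\X$ only along $\partial\Delta = \partial(\Sigma\setminus\overline\Delta) \cap \CC$, which is excluded once we pass to the open complement.

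The hard part will be establishing connectedness of $\Sigma \cap \X$ rigorously—ruling out that $\Sigma$ dips in and out of the interior region multiple times. The transversality and essentiality follow cleanly from the maximum principle and area comparison, but controlling the global topology of the intersection requires carefully tracking how $\Gamma$ winds relative to the catenoid's ideal boundary $C_1 \cup C_2$ and combining this with the graph/foliation structure from Proposition \ref{prop:star_shaped_Jordan_curve}. I expect the cleanest route is to use the star-shaped hypothesis to realize $\Sigma$ as a Killing graph and then show that the intersection with the rotationally symmetric region $\X$ inherits a monotonicity that forces a single component, but verifying the interaction between the Killing field of $\Sigma$ and the rotational symmetry of $\CC$ is the delicate point that will require the most care.
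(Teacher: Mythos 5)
There is a genuine gap in your transversality step. You claim that the strong maximum principle forces every intersection of $\Sigma$ with $\CC$ to be transverse, because two distinct minimal surfaces tangent at a point would have to coincide. That is not what the maximum principle says: it only applies when one surface lies locally on one side of the other at the tangency. Two distinct minimal surfaces can perfectly well be tangent at a point where they \emph{cross} (the local intersection then consists of $2k$ arcs, $k\geq 2$, meeting at the point), and no one-sided comparison is available. This is precisely why the paper cannot dispose of transversality in one line: it invokes the Meeks--Yau / Freedman--Hass--Scott local structure theorem for tangential intersections of minimal surfaces, then perturbs the catenoid by a small isometry $g_{t}$ so that the intersection with $\Sigma$ becomes transverse along finitely many simple closed curves which are null-homotopic in $g_{t}(\CC)$, and derives a contradiction with the essentiality claim (which is proved first, by sweeping the foliation $\{\Sigma_{\lambda}\}$ past a hypothetical disk $D\subset\CC$ bounded by an inessential intersection curve until a leaf touches $D$ from one side). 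Your proof needs this detour or something equivalent; as written, the transversality assertion is unjustified.

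The connectedness of $\Sigma\cap\X$ is also not established in your proposal; you acknowledge it is the hard part and suggest a speculative ``monotonicity of the Killing graph against the rotational symmetry,'' which is a different and unverified route. The paper's argument is short and you should adopt it: if $\Sigma\cap\CC$ had two essential components, the compact annular portion $\CC'$ of $\CC$ between them would have $\partial\CC'\subset\Sigma$ and would lie entirely in one component of $\B^{3}\setminus\Sigma$; sweeping the foliation $\{\Sigma_{\lambda}\}_{\lambda>0}$ from $\lambda=1$ outward (or inward) produces a first leaf $\Sigma_{\lambda_{0}}$ tangent to $\CC'$ from one side, and \emph{here} the maximum principle does apply and gives a contradiction. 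Your appeal to Proposition \ref{prop:single_disk} and to the separation of $C_{1}$ and $C_{2}$ by $\Gamma$ does not by itself rule out $\Sigma$ dipping in and out of $\X$. Finally, your cut-and-paste alternative for essentiality (that the closed piecewise-smooth surface $\Delta\cup D$ yields an area contradiction) is not obviously correct either; the clean argument is again the one-sided tangency with a leaf of the foliation.
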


\begin{proof}
According to \cite[Theorem 4.1]{And83} and\cite[Theorem 4.1]{HL87},
$\Sigma$ is the unique embedded minimal disk asymptotic to $\Gamma$, which has
the least area among all surfaces asymptotic to $\Gamma$.

It's easy to see that $\Gamma$ is essential in $\T=\B^{3}\setminus\X$,
which implies that the complete minimal disk $\Sigma$ must intersect $\CC$.

\vskip 0.25cm
\noindent\textbf{Claim 1}: \emph{If} $\Sigma$ \emph{and} $\CC$
\emph{intersect transversely at some simple closed curve} $\alpha$,
\emph{then} $\alpha$ \emph{must be essential in} $\CC$.

\begin{proof}[{\bf Proof of Claim 1}]
Otherwise, there exist two compact minimal disks $D\subset\CC$ and
$\Delta\subset\Sigma$ such that $\partial{}D=\alpha=\partial{}\Delta$.
For any $\lambda>0$, let $\Sigma_{\lambda}=h_{\lambda}(\Sigma)$, where
$h_{\lambda}$ be the isometry of $\B^3$ defined
by \eqref{eq:hyperbolic_translation},
then $\{\Sigma_{\lambda}\}_{0<\lambda<\infty}$ foliates $\B^3$ by
Proposition \ref{prop:star_shaped_Jordan_curve}.
Since $\partial{}D=\alpha\subset\Sigma$ by assumption,
there exists some $\lambda_{0}$ such that $\Sigma_{\lambda_{0}}$ is tangent to
$D$ from one side, which is impossible because of the maximum principle.
Therefore any curve of $\Sigma\cap\CC$ must be essential in $\CC$.
\end{proof}

\noindent\textbf{Claim 2}: $\Sigma$ \emph{and} $\CC$ \emph{intersect transversely}.

\begin{proof}[{\bf Proof of Claim 2}]
Otherwise we may assume that $\Sigma$ and $\CC$ intersect at a point $p$
non-transversely.
By \cite[Lemm 2]{MY1982(t)} there exist neighborhoods $U\subset\CC$ and
$V\subset\Sigma$ of $p\in\Sigma\cap\CC$ such that $U$ and $V$ intersect
along a finite number of curves passing through $p$ and the intersection
is transversal at points other than $p$
(see also \cite[Figure 1.2 and Lemma 1.4]{FHS83}).

By applying an isometry of $\B^3$, we may assume that $p$ is the origin of
$\B^3$ and the unit normal vector to both $\CC_{a}$ and $\Sigma$
at $p$ is parallel to the $w$-axis
(see the second paragraph in $\S$\ref{sec:introduction} for the definition
of $\B^3$). 
Let $g_{t}$ be a translation along the $w$-axis about distance $t$ for
$t\in(-\infty,\infty)$, then $g_{t}$ is an isometry of
$\B^{3}$. Let $\varepsilon>0$ be a sufficiently small number such that
$g_{t}(\partial_{\infty}\CC_{a})\cap\Gamma=\emptyset$ if $|t|<\varepsilon$.
According to \cite[Lemma 1.5]{FHS83}, we may slightly translate $\CC_{a}$
along the $w$-axis via $g_{t}$ for $|t|\ll\varepsilon$ so that
the minimal disk $\Sigma$ intersects the catenoid $g_{t}(\CC_{a})$
transversely at a finite number of simple closed curves
(see \cite[Figure 1.2]{FHS83}),
which are all null homotopic in $g_{t}(\CC_{a})$ by topological arguments.
But this is impossible according to Claim 1,
so $\Sigma$ must intersect $\CC_{a}$ transversely, and Claim 2 is proved.
\end{proof}

\noindent\textbf{Claim 3}: $\Sigma\cap\CC$ \emph{consists of exactly one simple
closed curve that is essential in the spherical catenoid} $\CC$.

\begin{proof}[{\bf Proof of Claim 3}]
If $\Sigma$ intersects $\CC$ more than once, then there is a compact portion
$\CC'$ of $\CC$ such that $\partial\CC'\subset\Sigma$ consists of two
components in $\Sigma\cap\CC$ and $\CC'$ is totally contained in one component
of $\B^{3}\setminus\Sigma$. Since $\{\Sigma_{\lambda}\}_{0<\lambda<\infty}$
foliates $\B^3$ by Proposition \ref{prop:star_shaped_Jordan_curve}, there
exists some $0<\lambda_{0}<1$ or $\lambda_{0}>1$ such that $\Sigma_{\lambda_{0}}$
is tangent to $\CC'$ from one side.
As usual, this is impossible because of the maximum principle.
This implies that $\Sigma$ intersects $\X$ exactly once, and so
$\Sigma$ intersects $\CC$ exactly once.
\end{proof}

Now each component of $\Sigma\cap\CC$ is a simple closed curve,
which is essential in $\CC$.
Let $\alpha$ denote one of the components of $\Sigma\cap\CC$.
Since $\X$ is a subregion of $\H^3$ with mean convex boundary (actually
$\partial\X=\CC$) and $\alpha$ is null homotopic in $\X$, the curve $\alpha$
must bound an area minimizing disk $\Delta$ in $\X$
by \cite{AS79,MY1982(t),MY1982(mz)}, which is also a subdomain of $\Sigma$.
\end{proof}

\subsection{Existence of area minimizing annuli}
\label{subsec:proof_of_main_theorem_II}

Let $\CC$ be any \emph{area minimizing} catenoid in $\B^{3}$ (i.e., the distance
from $\CC$ to its rotation axis is $\geq{}a_{L}$) asymptotic to disjoint round circles
$C_{1}$ and $C_{2}$ in $S_{\infty}^{2}$.
Suppose that $\Gamma_{1}$ and $\Gamma_{2}$ are disjoint \emph{star-shaped}
Jordan curves contained in the annulus-type component of
$S_{\infty}^{2}\setminus(C_{1}\cup{}C_{2})$ such that
$d(\Gamma_{1},\Gamma_{2})<2\varrho(a_{L})$, and that
$\Sigma_{1}$ and $\Sigma_{2}$ are area minimizing disks asymptotic to
$\Gamma_{1}$ and $\Gamma_{2}$ respectively.
By Proposition \ref{lem:catenoid_intersection}, $\alpha_{i}=\CC\cap\Sigma_{i}$ is a Jordan curve
in $\B^3$ for $i=1,2$.

Since $\Gamma_{1}$ and $\Gamma_{2}$ are disjoint,
it's well known that the disk-type area minimizing surfaces $\Sigma_{1}$
and $\Sigma_{2}$ are also disjoint (see for example \cite[Lemma 1.2]{FHS83}).
We need some notations:
\begin{itemize}
  \item Let $\Bcal$ be the subregion of $\B^3$ such that
        $\partial\Bcal=\Sigma_{1}\cup\Sigma_{2}$ and
        $\partial_{\infty}\Bcal$ is the annulus-type component of
        $S_{\infty}^{2}\setminus(\Gamma_{1}\cup\Gamma_{2})$, then $\Bcal$ is a
        subregion of $\B^3$ with mean convex boundary.
  \item Let $\CC'=\CC\cap\Bcal$, then $\CC'$ is a compact annulus-type minimal
        surface with $\partial\CC'=\alpha_{1}\cup\alpha_{2}$, where
        $\alpha_{i}=\CC\cap\Sigma_{i}$ for $i=1,2$.
  \item Suppose that $\Delta_{i}$ is the disk-type subdomain of $\Sigma_{i}$
        such that $\partial\Delta_{i}=\alpha_{i}$ for $i=1,2$.
\end{itemize}

\begin{lemma}\label{lem:existence_minimal_annulus}
Using the above settings.
Suppose $\gamma_{i}\subset\Sigma_{i}\setminus\overline{\Delta}_{i}$
is a rectifiable Jordan curve for $i=1,2$, then there exists an embedded compact
annulus-type area minimizing
surface $\Pi\subset\B^3$ such that $\partial\Pi=\gamma_{1}\cup\gamma_{2}$.
\end{lemma}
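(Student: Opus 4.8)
The plan is to exhibit one explicit competitor annulus spanning $\gamma_{1}\cup\gamma_{2}$, to verify the Douglas-type condition (1) of Definition \ref{def:least_area_annulus} for it, and then to solve the annular Plateau problem inside the mean convex region $\Bcal$. First I would fix the relevant subdisks of $\Sigma_{1}$ and $\Sigma_{2}$. Since each $\Sigma_{i}$ is a complete embedded disk and $\gamma_{i}\subset\Sigma_{i}\setminus\overline{\Delta}_{i}$ is a compact Jordan curve enclosing $\alpha_{i}=\partial\Delta_{i}$, it bounds a compact subdisk $S_{i}\subset\Sigma_{i}$ with $\Delta_{i}\subset S_{i}$; set $R_{i}=\overline{S_{i}\setminus\Delta_{i}}$, an annulus in $\Sigma_{i}$ with $\partial R_{i}=\gamma_{i}\cup\alpha_{i}$. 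Gluing $R_{1}$ and $R_{2}$ to the catenoid piece $\CC'$ along $\alpha_{1}$ and $\alpha_{2}$ produces the embedded comparison annulus
\begin{equation*}
   \Pi_{0}=R_{1}\cup\CC'\cup R_{2}\ ,\qquad \partial\Pi_{0}=\gamma_{1}\cup\gamma_{2}\ ,
\end{equation*}
contained in $\overline{\Bcal}$, with $\area(\Pi_{0})=\area(R_{1})+\area(\CC')+\area(R_{2})$.

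Next I would identify the least-area disks appearing in condition (1). Because each $\Sigma_{i}$ is absolutely area minimizing (see $\S$\ref{subsec:star_shaped_curve}), every subdomain of $\Sigma_{i}$ is absolutely area minimizing; hence the subdisk $S_{i}$ is a least-area disk $D_{i}$ spanning $\gamma_{i}$, and $\Delta_{i}$ is a least-area disk spanning $\alpha_{i}$. Writing $\area(D_{i})=\area(S_{i})=\area(R_{i})+\area(\Delta_{i})$ and cancelling $\area(R_{1})+\area(R_{2})$, the Douglas condition $\area(\Pi_{0})<\area(D_{1})+\area(D_{2})$ becomes equivalent to
\begin{equation}\label{eq:CCp_bound}
   \area(\CC')<\area(\Delta_{1})+\area(\Delta_{2})\ .
\end{equation}

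The heart of the matter is the strict inequality \eqref{eq:CCp_bound}, and this is the step I expect to be the main obstacle. I would deduce it directly from the hypothesis that $\CC$ is an \emph{area minimizing} catenoid. Since $\alpha_{1}$ and $\alpha_{2}$ are essential in $\CC$ by Proposition \ref{lem:catenoid_intersection}, the piece $\CC'=\CC\cap\Bcal$ is a compact annular subdomain homotopically equivalent to $\CC$, hence a compact area minimizing annulus; condition (1) of Definition \ref{def:least_area_annulus} applied to $\CC'$, whose two boundary curves $\alpha_{1},\alpha_{2}$ are spanned by the least-area disks $\Delta_{1},\Delta_{2}$, is precisely \eqref{eq:CCp_bound}. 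This is exactly where the standing hypothesis $a\geq a_{L}$ (equivalently $d(\Gamma_{1},\Gamma_{2})<2\varrho(a_{L})$, with $a_{L}$ the zero of $\varphi$ from Theorem \ref{thm:area_difference}) is indispensable: by Theorem \ref{thm:main_theorem_I}(1) the inequality would reverse for $a_{c}\leq a<a_{L}$, so \eqref{eq:CCp_bound} genuinely encodes the area-minimizing threshold rather than mere stability.

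Finally, with \eqref{eq:CCp_bound} in hand the competitor $\Pi_{0}$ satisfies the Douglas condition, and $\gamma_{i}\subset\Sigma_{i}=\partial\Bcal$ with $\Bcal$ a region of mean convex boundary. The existence theory for embedded area minimizing annuli bounded by two rectifiable curves in a mean convex domain (\cite{AS79,MY1982(t),MY1982(mz)}) then yields an embedded compact area minimizing annulus $\Pi\subset\overline{\Bcal}\subset\B^{3}$ with $\partial\Pi=\gamma_{1}\cup\gamma_{2}$: condition (1) of Definition \ref{def:least_area_annulus} holds since $\area(\Pi)\leq\area(\Pi_{0})<\area(D_{1})+\area(D_{2})$, while condition (2) holds because $\Pi$ minimizes area among homotopic annuli. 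The only points needing care beyond the main obstacle are checking that $\gamma_{i}$ encloses $\alpha_{i}$ in $\Sigma_{i}$ (so that $R_{i}$ is genuinely an annulus) and that the minimizer stays in $\overline{\Bcal}$ by the mean convexity barrier, both of which are routine.
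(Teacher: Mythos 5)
Your proposal is correct and follows essentially the same route as the paper: the comparison annulus $R_{1}\cup\CC'\cup R_{2}$ is exactly the paper's surface $S=\overline{\CC'\cup(\Sigma_{1}'\setminus\Delta_{1})\cup(\Sigma_{2}'\setminus\Delta_{2})}$, the key inequality $\area(\CC')<\area(\Delta_{1})+\area(\Delta_{2})$ is obtained in the same way from the hypothesis that $\CC$ is an area minimizing catenoid, and the existence of the minimizer in the mean convex region $\Bcal$ is drawn from the same references. The only step you compress is the final upgrade from minimizing in $\Bcal$ to minimizing in $\B^{3}$, which the paper carries out explicitly by using the foliations $\{h_{\lambda}(\Sigma_{i})\}$ of the two complementary components as maximum-principle barriers; your appeal to the mean convexity barrier is the same idea.
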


\begin{proof}
Let $\Sigma_{i}'$ be the compact disk-type subdomain of $\Sigma_{i}$
such that $\partial{}\Sigma_{i}'=\gamma_{i}$ for $i=1,2$. Obviously
$\Sigma_{i}'$ is the area minimizing disk spanning
$\gamma_{i}$ for $i=1,2$. By Proposition \ref{lem:catenoid_intersection},
we can define an embedded compact annuls $S\subset\Bcal$ whose boundary
is $\gamma_{1}\cup\gamma_{2}$:
\begin{equation}
   S=\overline{\CC'\cup(\Sigma_{1}'\setminus\Delta_{1})
   \cup(\Sigma_{2}'\setminus\Delta_{2})}\ .
\end{equation}
Since $\CC$ is assumed to be an area minimizing catenoid, we have
the inequality $\area(\CC')<\area(\Delta_{1})+\area(\Delta_{2})$.
Therefore we have
\begin{equation*}
\begin{aligned}
   \area(S)
      & = \area(\CC')+\area(\Sigma_{1}'\setminus{}\Delta_{1})+
          \area(\Sigma_{2}'\setminus{}\Delta_{2}) \\
      & < \area(\Delta_{1})+\area(\Delta_{2})+
          \area(\Sigma_{1}'\setminus{}\Delta_{1})+
          \area(\Sigma_{2}'\setminus{}\Delta_{2}) \\
      & = \area(\Sigma_{1}')+\area(\Sigma_{2}')\ .
\end{aligned}
\end{equation*}
By \cite[Theorem 7]{AS79} or \cite[Theorem 1]{MY1982(t)},
there exists an area minimizing annulus $\Pi\subset\Bcal$
such that $\partial\Pi=\gamma_{1}\cup\gamma_{2}$.

Next we need show that $\Pi$ is also the area minimizing annulus in $\B^3$.
Otherwise, assume that $\Pi'$ is an area minimizing annulus in $\B^3$
with boundary components $\gamma_{1}$ and $\gamma_{2}$ such that
$\area(\Pi')<\area(\Pi)$.
We shall prove that $\Pi'$ is actually contained in $\Bcal$.
In fact, $\Pi'$ can't intersect the component of $\B^{3}\setminus\Bcal$
bounded by the minimal disk $\Sigma_{1}$ since the
family of the minimal disks $\{h_{\lambda}(\Sigma_{1})\}_{0<\lambda<1}$
foliates this subregion by Proposition \ref{prop:star_shaped_Jordan_curve},
where each $h_{\lambda}$ is defined by \eqref{eq:hyperbolic_translation}.
Similarly, $\Pi'$ can't intersect the component of $\B^{3}\setminus\Bcal$
bounded by $\Sigma_{2}$.

Therefore $\Pi'\subset\Bcal$. But we have proved that $\Pi$ is the
area minimizing annulus in $\Bcal$.
This is a contradiction. So $\Pi$ is the area minimizing
annulus in $\B^3$.
\end{proof}


Now we are able to prove Theorem \ref{thm:main_theorem_II}.

\begin{maintheoremII}
Let $\Gamma_{1}$ and $\Gamma_{2}$ be disjoint rectfiable star-shaped
Jordan curves in $S_{\infty}^{2}$. If the distance between $\Gamma_{1}$ and
$\Gamma_{2}$ is bounded from above as follows
\begin{equation}\tag{\ref{eq:upper_bound}}
   d(\Gamma_{1},\Gamma_{2})<{}2\varrho(a_{L})\approx{}0.876895\ ,
\end{equation}
where $\varrho$ is the function defined by \eqref{eq:Gomes function I}
and $a_{L}\approx{}0.847486$ is the constant given by
Theorem \ref{thm:area_difference},
then there exists an embedded annulus-type area minimizing surface
$\Pi\subset\H^3$, which is asymptotic to $\Gamma_{1}\cup\Gamma_{2}$.

Moreover the upper bound \eqref{eq:upper_bound} is optimal in the following
sense: If there is an area minimizing surface in $\H^3$
asymptotic to two disjoint round circles in $S_{\infty}^{2}$, then the distance
\eqref{eq:distance_circles} between the circles is $\leq{}2\varrho(a_{L})$.
\end{maintheoremII}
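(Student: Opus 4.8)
The plan is to reduce the annulus problem to the already-settled circle case and then pass to a limit of compact area minimizing annuli. First I would exploit the infimum in Definition \ref{def:distance_Jordan_curves}: since $d(\Gamma_1,\Gamma_2)<2\varrho(a_L)$, there exist disjoint round circles $C_i\subset\Delta_i$ ($i=1,2$) with $d(C_1,C_2)<2\varrho(a_L)$, so by the equivalent form Theorem \ref{thm:main_theorem_I_prime}(3) there is an (absolutely) area minimizing spherical catenoid $\CC$ asymptotic to $C_1\cup C_2$. Because $C_i$ lies interior to $\Delta_i$, both $\Gamma_1$ and $\Gamma_2$ sit in the annular component of $S_{\infty}^{2}\setminus(C_1\cup C_2)$, which places us exactly in the configuration of $\S$\ref{subsec:proof_of_main_theorem_II}: let $\Sigma_1,\Sigma_2$ be the area minimizing disks asymptotic to $\Gamma_1,\Gamma_2$, let $\Bcal$ be the mean-convex region they bound, and record from Proposition \ref{lem:catenoid_intersection} that $\CC$ meets each $\Sigma_i$ transversely in a single essential Jordan curve $\alpha_i$, cutting off $\CC'=\CC\cap\Bcal$ and disks $\Delta_i\subset\Sigma_i$ with the strict inequality $\area(\CC')<\area(\Delta_1)+\area(\Delta_2)$.

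Next I would exhaust each disk. Fix $p\in\H^3$ and set $\gamma_i^{(n)}=\Sigma_i\cap\partial B^3(p,r_n)$ for a sequence $r_n\to\infty$; by Proposition \ref{prop:single_disk} these are, for $n$ large, single rectifiable Jordan curves lying in $\Sigma_i\setminus\overline{\Delta}_i$ (since $\overline{\Delta}_i$ is compact) and converging to $\Gamma_i$ at infinity. Lemma \ref{lem:existence_minimal_annulus} then produces, for each such $n$, an embedded compact area minimizing annulus $\Pi_n\subset\Bcal$ with $\partial\Pi_n=\gamma_1^{(n)}\cup\gamma_2^{(n)}$. The comparison surface built in that proof yields the key estimate $\area(\Pi_n)\leq\area(\Sigma_1^{(n)})+\area(\Sigma_2^{(n)})-\delta$, where $\Sigma_i^{(n)}\subset\Sigma_i$ is the subdisk bounded by $\gamma_i^{(n)}$ and $\delta=\area(\Delta_1)+\area(\Delta_2)-\area(\CC')>0$ is a fixed positive gap independent of $n$.

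I would then let $n\to\infty$ and extract a limit $\Pi=\lim_n\Pi_n$. The confinement $\Pi_n\subset\Bcal\subset\CH(\Gamma_1\cup\Gamma_2)$ together with the finiteness of the density at infinity from Theorem \ref{thm:finite_density_at_infinity} (applied to $\Sigma_1,\Sigma_2$, which bounds the comparison areas) gives, via the monotonicity formula, uniform area bounds on each fixed ball; varifold compactness and interior regularity then yield a complete embedded minimal limit $\Pi\subset\Bcal$ with $\partial_{\infty}\Pi\subset\Gamma_1\cup\Gamma_2$. The main obstacle is nondegeneracy: I must rule out that $\Pi$ collapses onto the disk pair $\Sigma_1\cup\Sigma_2$ rather than remaining an annulus, and must show $\partial_{\infty}\Pi$ is all of $\Gamma_1\cup\Gamma_2$. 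Here the fixed gap $\delta$ is decisive: it keeps $\area(\Pi_n)$ uniformly below the two-disk value, so the limit inherits condition (1) of Definition \ref{def:least_area_annulus} and cannot be the pair of disks; and since each $\gamma_i^{(n)}$ is dragged out to $\Gamma_i$ (using the graph-over-$\Gamma_i\times[0,\rho_{\Gamma_i})$ description near infinity from the proof of Lemma \ref{lem:Jordan_curve_single}), the two ends of $\Pi$ are pinned to $\Gamma_1$ and $\Gamma_2$ respectively, forcing $\Pi$ to be a connected annulus with the prescribed asymptotic boundary. That $\Pi$ is area minimizing in the sense of Definition \ref{def:least_area_annulus} then follows from lower semicontinuity of area under the limit.

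Finally, the optimality clause is immediate from the circle case: if $\Sigma$ is area minimizing and asymptotic to disjoint round circles $C_1,C_2$, then by Corollary \ref{cor:LR} it is a spherical catenoid $\CC_a$, and by the last statement of Theorem \ref{thm:main_theorem_I} necessarily $a\geq a_L$. Since $\varrho$ is decreasing on $(a_c,\infty)$ and $d(C_1,C_2)=2\varrho(a)$, this gives $d(C_1,C_2)=2\varrho(a)\leq 2\varrho(a_L)$, as claimed.
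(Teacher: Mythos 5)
Your proposal is correct and follows essentially the same route as the paper: produce an area minimizing catenoid barrier from the distance hypothesis via Theorem \ref{thm:main_theorem_I_prime}, apply Proposition \ref{lem:catenoid_intersection} and Lemma \ref{lem:existence_minimal_annulus} to an exhaustion by balls to get compact area minimizing annuli, and pass to a varifold limit controlled by the density bound of Theorem \ref{thm:finite_density_at_infinity}, with optimality deduced from the circle case. Your explicit uniform gap $\delta=\area(\Delta_1)+\area(\Delta_2)-\area(\CC')$ is a nice way of making the non-collapse to $\Sigma_1\cup\Sigma_2$ transparent, where the paper instead defers to the arguments of Anderson--Schoen type in the cited literature.
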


\begin{proof}
Because of upper bound \eqref{eq:upper_bound}, there exists an area minimizing
catenoid $\CC$ such that two components of $\partial_{\infty}\CC$ are contained
in the disk-type components of $S_{\infty}^{2}\setminus(\Gamma_{1}\cup\Gamma_{2})$
respectively and $\partial_{\infty}\CC\cap(\Gamma_{1}\cup\Gamma_{2})=\emptyset$.

Let $O$ be a fixed point contained in the region $\Bcal$ of
$\B^{3}\setminus(\Sigma_{1}\cup\Sigma_{2})$ such that
$\partial\Bcal=\Sigma_{1}\cup\Sigma_{2}$ and $\partial_{\infty}\Bcal$
is the annulus component of $S_{\infty}^{2}\setminus(\Gamma_{1}\cup\Gamma_{2})$.
Let $B^{3}(s)\subset\B^{3}$ denote the open $3$-ball of radius $s$
(centered at the origin $O$), that is,
\begin{equation*}
   B^{3}(s)=B^{3}(O,s)=\{x\in\H^{3}\ |\ \dist(x,O)<s\}\ .
\end{equation*}
There exists a sufficiently large positive number $r_{0}$ such that the following
two conditions are satisfied for any $s>r_{0}$ 
(see Proposition \ref{prop:single_disk}):
\begin{itemize}
  \item $B^{3}(s)\cap\Sigma_{i}$ consists of exactly one disk, denoted by
        $\Sigma_{i}(s)$ for $i=1,2$.
  \item $B^{3}(s)\cap\Ascr$ is homeomorphic to $\Ascr$, where $\Ascr$ is
        the annulus-type part of the boundary
        of $\CH(\Gamma_{1}\cup\Gamma_{2})$, which is asymptotic to
        $\Gamma_{1}\cup\Gamma_{2}$.
\end{itemize}
For $i=1,2$, we define a Jordan curve $\gamma_{i}(s)$ as follows
\begin{equation*}
   \gamma_{i}(s)=\partial{}\Sigma_{i}(s)=\Sigma_{i}\cap\partial{}B^{3}(s)\ .
\end{equation*}
Because of Proposition \ref{prop:single_disk}, $\gamma_{i}(s)\to\Gamma_{i}$
as $s\to\infty$ for $i=1,2$.

According to Lemma \ref{lem:existence_minimal_annulus}, there exists
an embedded compact annulus-type area minimizing surface $\Pi(s)\subset{}B^{3}(s)$ with
$\partial\Pi(s)=\gamma_{1}(s)\cup\gamma_{2}(s)\subset\partial{}B^{3}(s)$ for each
$s\geq{}r_{0}$ so that
\begin{itemize}
  \item $\area(\Pi(s))<\area(\Sigma_{1}(s))+\area(\Sigma_{2}(s))$, and
  \item $\Pi(s)$ is contained in $\CH(\Gamma_{1}\cup\Gamma_{2})$, $\overline{B^{3}(s)}$
        and $\overline{\Bcal}$, where $\Bcal$ is the subregion of $\B^3$ bounded
        by $\Sigma_{1}$ and $\Sigma_{2}$.
\end{itemize}

\begin{claim}For any $0<r<s$, there exists a constant $C_{r}$, depending
only on $r$, $\Sigma_{1}$ and $\Sigma_{2}$ such that
$\area(\Pi(s)\cap{}B^{3}(r))\leq{}C_{r}$.
\end{claim}

\begin{proof}Let $\theta=\theta_{1}+\theta_{2}$, where $\theta_{i}$ is
the density at infinity of $\Sigma_i$ for $i=1,2$.
According to Theorem \ref{thm:finite_density_at_infinity}, both
$\theta_1$ and $\theta_2$ are finite, so is $\theta$.

For any $0<r\leq{}s$, we have
\begin{equation*}
\begin{aligned}
   \frac{\area(\Pi(s)\cap{}B^{3}(r))}{4\pi\sinh^{2}(r/2)}
       &\leq\frac{\area(\Pi(s))}{4\pi\sinh^{2}(s/2)}\\
       &\leq\frac{\area(\Sigma_{1}(s))+\area(\Sigma_{2}(s))}{4\pi\sinh^{2}(s/2)}\\
       &=\frac{\area(\Sigma_{1}(s))}{4\pi\sinh^{2}(s/2)}+
         \frac{\area(\Sigma_{2}(s))}{4\pi\sinh^{2}(s/2)}\\
       &\leq\theta_{1}+\theta_{2}=\theta\ ,
\end{aligned}
\end{equation*}
where we use the facts $\Pi(s)\cap{}B^{3}(s)=\Pi(s)$ and
$\Sigma_{i}(s)=\Sigma_{i}\cap{}B^{3}(s)$ for $i=1,2$, therefore
\begin{equation}\label{eq:upperbound}
   \area(\Pi(s)\cap{}B^{3}(r))\leq\theta\cdot{}4\pi\sinh^{2}(r/2)=:C_{r}
\end{equation}
for all $0<r\leq{}s$. The proof of the Claim is complete.
\end{proof}

Pick up a sequence of increasing positive real numbers $r_{0}<r_{1}<r_{2}<\cdots$
such that $r_{k}\to\infty$ as $k\to\infty$.
According to Lemma \ref{lem:existence_minimal_annulus},
there exists an area minimizing annulus $\Pi(r_k)\subset\B^3$
spanning $\gamma_{1}(r_k)$ and $\gamma_{2}(r_k)$.
Now we have a sequence of compact annulus-type area minimizing surfaces
$\Pi(r_1),\Pi(r_2),\ldots,\Pi(r_k),\dots$ so that 
$\area(\Pi(r_k))<\area(\Sigma_{1}(r_{k}))+\area(\Sigma_{2}(r_{k}))$
for $k=1,2,\ldots$.

We shall prove that $\{\Pi_{k}:=\Pi(r_k)\}_{k\geq{}1}$ converges smoothly to
a complete area minimizing annulus $\Pi\subset\B^3$ which is asymptotic to
$\Gamma_{1}\cup\Gamma_{2}$ as $k\to\infty$. This can be
done via geometric measure theory, the reader can check \cite{AS79,And83}
for details. Here we just sketch the whole process:

1) For $k=1,2,\ldots$, we can associate each (compact) area minimizing annulus
$\Pi_k$ with a varifold $\v(\Pi_k)\in{}V_{2}(\H^3)$ (see \cite[$\S$3.5]{All75}).

2) According to the estimate \eqref{eq:upperbound} in the above claim, we have
\begin{equation*}
   V=\lim_{k\to\infty}\v(\Pi_k)\in{}V_{2}(\H^3)\
\end{equation*}
by the weak convergence of Randon measures (see \cite[Theorem 3.2]{CM11}).

3) By Theorem 2 in \cite{AS79} or Proposition 3.5 in \cite{CM11},
the varifold $V$ is stationary.

4) According to Allard's regularity theorem in \cite[$\S$8]{All75} and the arguments
in sections 4, 5 and 6 of \cite{AS79}, for each point
$x_{0}\in\spt{}\|V\|$ there is a positive integer $n_{x_0}$, a $\rho_{x_0}>0$,
and an analytic minimal surface $\Sigma_{x_0}$ such that
\begin{equation*}
   V{\niv}{}B^{3}(x_{0},\rho_{x_0})\times{}G(3,2)=n_{x_0}\v(\Sigma_{x_0})\ .
\end{equation*}
This implies that there exists a smooth minimal surface $\Pi\subset\H^{3}$ with
$\partial_{\infty}\Pi=\Gamma_{1}\cup\Gamma_{2}$ such that $V=\v(\Pi)$.
By the arguments in section 9 of \cite{AS79}, this minimal surface $\Pi$
is of annulus-type.

5) It's well known that a limit of area minimizing surfaces is itself area
minimizing. The proof can be found in the last paragraph of the proof of
Theorem 3.1 in \cite{MY92}.
Therefore the minimal surface $\Pi$ is (homotopically) area minimizing, that is,
any compact subdomain of $\Pi$ is an area minimizing surface.

To show that the upper bound \eqref{eq:upper_bound} is optimal,
let's consider the special case
when $\Gamma_{1}$ and $\Gamma_{2}$ are two round circles in $S_{\infty}^{2}$.
If $\Sigma\subset\H^3$ is an area minimizing surface
asymptotic to $\Gamma_{1}\cup\Gamma_{2}$, then
$d(\Gamma_{1},\Gamma_{2})\leq{}2\varrho(a_{L})$
by Theorem \ref{thm:main_theorem_I_prime}.

Now the proof of Theorem \ref{thm:main_theorem_II} is done.
\end{proof}


\bibliographystyle{amsalpha}
\bibliography{ref_annulus}
\end{document}